\documentclass[12pt,a4paper]{amsart}


\usepackage[all,2cell]{xy}

\setcounter{tocdepth}{1}

\textwidth15.1cm \textheight22.7cm \headheight12pt
\oddsidemargin.4cm \evensidemargin.4cm \topmargin0cm

\newtheorem{theorem}[subsection]{Theorem}
\newtheorem*{theorem*}{Theorem}

\newtheorem{lemma}[subsection]{Lemma}
\newtheorem{proposition}[subsection]{Proposition}
\newtheorem{corollary}[subsection]{Corollary}

\newtheorem*{conjecture*}{Conjecture}

\newtheorem{remark}[subsection]{Remark}

\newcommand{\ie}{{\em i.e.}\ }
\newcommand{\confer}{{\em cf.}\ }

\newcommand{\ko}{\: , \;}

\newcommand{\ol}[1]{\overline{#1}}

\newcommand{\opname}[1]{\operatorname{\mathsf{#1}}}

\renewcommand{\mod}{\opname{mod}\nolimits}

\newcommand{\Mod}{\opname{Mod}\nolimits}
\newcommand{\Pcm}{\opname{Pcm}\nolimits}
\newcommand{\Lex}{\opname{Lex}\nolimits}

\newcommand{\per}{\opname{per}\nolimits}
\newcommand{\add}{\opname{add}\nolimits}
\newcommand{\op}{^{op}}

\newcommand{\hfd}[1]{\ch_{fd}(#1)/\ac_{fd}(#1)}

\renewcommand{\Im}{\opname{Im}\nolimits}
\newcommand{\Ker}{\opname{Ker}\nolimits}

\newcommand{\fpr}{\opname{fpr}\nolimits}

\newcommand{\bp}{\mathbf{p}}

\newcommand{\ac}{\mathcal{A}c}

\newcommand{\colim}{\opname{colim}}
\newcommand{\cok}{\opname{cok}\nolimits}
\newcommand{\im}{\opname{im}\nolimits}
\renewcommand{\ker}{\opname{ker}\nolimits}

\newcommand{\Z}{\mathbb{Z}}

\newcommand{\iso}{\stackrel{_\sim}{\rightarrow}}

%
%
\newcommand{\Hom}{\opname{Hom}}

\newcommand{\RHom}{\opname{RHom}}
\newcommand{\cHom}{\mathcal{H}\it{om}}

\newcommand{\Ext}{\opname{Ext}}

\newcommand{\End}{\opname{End}}

\newcommand{\ten}{\otimes}
\newcommand{\lten}{\overset{\boldmath{L}}{\ten}}

%
%
\newcommand{\ca}{{\mathcal A}}
\newcommand{\cb}{{\mathcal B}}
\newcommand{\cc}{{\mathcal C}}
\newcommand{\cd}{{\mathcal D}}

\newcommand{\cf}{{\mathcal F}}

\newcommand{\ch}{{\mathcal H}}

\newcommand{\cj}{{\mathcal J}}

\newcommand{\ct}{{\mathcal T}}
\newcommand{\cu}{{\mathcal U}}

\newcommand{\eps}{\varepsilon}

\renewcommand{\hat}[1]{\widehat{#1}}

\newcommand{\m}{\mathfrak{m}}
\newcommand{\del}{\partial}

\setcounter{page}{1}

\begin{document}

\title[Derived equivalence from mutations]{Derived equivalences from mutations 
of\\ quivers with potential}

\author{Bernhard Keller}
\address{Bernhard Keller\\
Universit\'e Paris Diderot -- Paris 7\\
UFR de Math\'ematiques\\
Institut de Math\'ematiques de Jussieu, UMR 7586 du CNRS \\
Case 7012\\
B\^{a}timent Chevaleret\\
75205 Paris Cedex 13\\
France } \email{keller@math.jussieu.fr}

\author{Dong Yang}
\address{
Dong Yang\\
Institut de Math\'ematiques de Jussieu, UMR 7586 du CNRS \\
Th\'eorie des groupes\\
Case 7012\\
2 place Jussieu\\
75251 Paris Cedex 05\\
France}
\email{yang@math.jussieu.fr}

\date{\today}
\maketitle

\bigskip

\begin{abstract}
  We show that Derksen-Weyman-Zelevinsky's mutations of quivers with
  potential yield equivalences of suitable $3$-Calabi-Yau triangulated
  categories. Our approach is related to that of Iyama-Reiten and
  `Koszul dual' to that of Kontsevich-Soibelman. It improves on
  previous work by Vit\'oria. In the appendix, the first-named author
  studies pseudo-compact derived categories of certain
  pseudo-compact dg algebras.
\end{abstract}

\tableofcontents

\section{Introduction}

\subsection{Previous work and context}
A quiver is an oriented graph. Quiver mutation is an elementary
operation on quivers. In mathematics, it first appeared in the
definition of cluster algebras invented by Fomin-Zelevinsky at the
beginning of this decade \cite{FominZelevinsky02} (though, with
hindsight, Gabrielov transformations \cite{Gabrielov73}
\cite{Ebeling81} may be seen as predecessors). In physics, it appeared
a few years before in Seiberg duality \cite{Seiberg95}. In this
article, we `categorify' quiver mutation by interpreting it in terms
of equivalences between derived categories. Our approach is based on
Ginzburg's Calabi-Yau algebras \cite{Ginzburg06} and on mutation of
quivers with potential. Let us point out that mutation of quivers with
potential goes back to the insight of mathematical physicists, \confer
for example section~5.2 of \cite{FengHananyHeIqbal03}, section~2.3 of
\cite{BeasleyPlesser01}, section~6 of
\cite{FengHananyHeUranga01} and section~3 of \cite{KlebanovStrassler00}.
Their methods were made rigorous by Derksen-Weyman-Zelevinsky
\cite{DerksenWeymanZelevinsky08}.  The idea of interpreting quiver
mutation via derived categories goes back at least to
Berenstein-Douglas \cite{BerensteinDouglas02} and was further
developed by Mukhopadhyay-Ray \cite{MukhopadhyayRay04} in physics and
by Bridgeland \cite{Bridgeland05}, Iyama-Reiten \cite{IyamaReiten06},
Vit\'oria \cite{Vitoria09}, Kontsevich-Soibelman
\cite{KontsevichSoibelman08} \ldots\ in mathematics. The main new
ingredient in our approach is Ginzburg's algebra
\cite{Ginzburg06} associated to a quiver with potential.  This
differential graded (=dg) algebra is concentrated in negative degrees.
Several previous approaches have only used its homology in degree~$0$,
namely the Jacobian algebra of \cite{DerksenWeymanZelevinsky08} or
vacualgebra of the physics literature.  By taking into account the full dg
algebra, we are able to get rid of the restrictive hypotheses that one
had to impose and to prove the homological interpretation in full
generality.

Our second source of inspiration and motivation is the theory which
links cluster algebras \cite{FominZelevinsky02} to representations
of quivers and finite-dimensional algebras, \confer~\cite{Keller08c}
for a recent survey. This theory is based on the discovery
\cite{MarshReinekeZelevinsky03} of the striking similarity between
the combinatorics of cluster algebras and those of tilting theory in
the representation theory of finite-dimensional algebras. The
cluster category, invented in
\cite{BuanMarshReinekeReitenTodorov06}, and the module category over
the preprojective algebra \cite{GeissLeclercSchroeer06} provided a
conceptual framework for this remarkable phenomenon. These
categories are triangulated respectively exact and $2$-Calabi-Yau.
Recall that a (triangulated or exact) category is $d$-Calabi-Yau if
there are non degenerate bifunctorial pairings
\[
\Ext^i(X,Y) \times \Ext^{d-i}(Y,X) \to k \ko
\]
where $k$ is the ground field. Iyama-Reiten \cite{IyamaReiten06}
used both $2$- and $3$-Calabi-Yau categories to categorify mutations
of different classes of quivers. We first learned about the
possibility of categorifying arbitrary quiver mutations using
$3$-Calabi-Yau categories from Kontsevich, who constructed the
corresponding categories using $A_\infty$-structures,
\confer~section~5 of \cite{Keller08d} for a sketch. This idea is
fully developed in section~8 of Kontsevich-Soibelman's work
\cite{KontsevichSoibelman08}. The Ginzburg dg algebras which we use
are Koszul dual to Kontsevich-Soibelman's $A_\infty$-algebras. Since
a Ginzburg dg algebra is concentrated in negative degrees, its
derived category has a canonical $t$-structure given by the standard
truncation functors. The heart is equivalent to the module category
over the zeroth homology of the Ginzburg algebra, \ie~over the
Jacobian algebra. This allows us
to establish the link to previous approaches, especially to the one
based on the cluster category and the one based on decorated
representations \cite{MarshReinekeZelevinsky03}
\cite{DerksenWeymanZelevinsky08} \cite{DerksenWeymanZelevinsky09}.

\subsection{The classical paradigm and the main result}
Like Derksen--Weyman--Zele\-vinsky \cite{DerksenWeymanZelevinsky08},
we model our approach on the reflection functors introduced by
Bernstein--Gelfand--Ponomarev \cite{BernsteinGelfandPonomarev73}.
Let us recall their result in a modern form similar to the one we
will obtain: Let $Q$ be a finite quiver and $i$ a {\em source} of
$Q$, \ie a vertex without incoming arrows. Let $Q'$ be the quiver
obtained from $Q$ by reversing all the arrows going out from $i$.
Let $k$ be a field, $kQ$ the path algebra of $Q$ and $\cd(kQ)$ the
derived category of the category of all right $kQ$-modules. For a
vertex $j$ of $Q'$ respectively $Q$, let $P'_j$ respectively $P_j$
be the projective indecomposable associated with the vertex $j$
(\ie the right ideal generated by the idempotent $e_j$ of the
respective path algebra). Then the main result of
\cite{BernsteinGelfandPonomarev73} reformulated in terms of derived
categories following Happel \cite{Happel87} says that there is a
canonical triangle equivalence
\[
F: \cd(kQ') \to \cd(kQ)
\]
which takes $P'_j$ to $P_j$ for $j\neq i$ and
$P'_i$ to the cone over the morphism
\[
P_i \to \bigoplus P_j
\]
whose components are the left multiplications by all arrows going
out from $i$. In Theorem~\ref{T:mainthm}, we will obtain an
analogous result for the mutation of a quiver with potential $(Q,W)$
at an {\em arbitrary} vertex $i$, where the r\^ole of the quiver
with reversed arrows is played by the quiver with potential $(Q',
W')$ obtained from $(Q,W)$ by mutation at $i$ in the sense of
Derksen-Weyman-Zelevinsky \cite{DerksenWeymanZelevinsky08}. The
r\^ole of the derived category $\cd(kQ)$ will be played by the
derived category $\cd(\Gamma)$ of the complete differential graded
algebra $\Gamma=\Gamma(Q,W)$ associated with $(Q,W)$ by Ginzburg
\cite{Ginzburg06}. Let us point out that our result is analogous to
but not a generalization of Bernstein-Gelfand-Ponomarev's since even
if the potential $W$ vanishes, the derived category $\cd(\Gamma)$ is
{\em not} equivalent to $\cd(kQ)$. Notice that we work
with completed path algebras because this is the setup
in which Derksen-Weyman-Zelevinsky's results are proved.
Nevertheless, with suitable precautions, one can prove similar 
derived equivalences for non complete Ginzburg algebras, 
\confer \cite{Keller08a}.

\subsection{Plan of the paper and further results}
In section~\ref{S:preliminaries}, we first recall
Derksen-Weyman-Zelevinsky's results on mutations of quivers with
potential. In section~\ref{ss:ginzburgalg}, we recall the definition
of the Ginzburg dg algebra associated to a quiver with potential and
show that reductions of quivers with potential yield
quasi-isomorphisms between Ginzburg dg algebras. Then we recall the
basics on derived categories of differential graded algebras in
sections \ref{ss:derived-categories} and
\ref{ss:cofibrant-resolutions}. In
section~\ref{ss:finite-dimensional-dg-modules}, we prove the useful
Theorem~\ref{thm:findim-compact}, which could be viewed as an
`Artin-Rees lemma' for derived categories of suitable complete dg
algebras.  In section~\ref{S:the-equivalence}, we state and prove the
main result described above. In
section~\ref{s:nearly-Morita-equivalence}, we establish a link with
cluster categories in the sense of Amiot~\cite{Amiot08a}, who
considered the Jacobi-finite case, and Plamondon \cite{Plamondon09},
who is considering the general case. We use this to give another proof
of the fact that neighbouring Jacobian algebras are nearly Morita
equivalent in the sense of Ringel \cite{Ringel07}.  This was
previously proved in \cite{BuanIyamaReitenSmith08} and
\cite{DerksenWeymanZelevinsky09}.  We include another proof to
illustrate the links between our approach and those of these two
papers. In section~\ref{s:tilting-Jacobian-algebras}, we compare the
canonical $t$-structures in the derived categories associated with a
quiver with potential and its mutation at a vertex. In general, the
hearts of these $t$-structures are not related by a tilt in the sense
of Happel-Reiten-Smal\o\ \cite{HappelReitenSmaloe96}. However, this is
the case if the homology of the Ginzburg dg algebra is concentrated in
degree~$0$. In section~\ref{s:stability-under-mutation}, we show that
this last condition is preserved under mutation. In a certain sense,
this explains the beautiful results of Iyama-Reiten
\cite{IyamaReiten06} on mutation of $3$-Calabi-Yau algebras
(concentrated in degree~$0$): In fact, these algebras are
quasi-isomorphic to the Ginzburg algebras associated to the
corresponding quivers with potential. Strictly speaking, the classical
derived category is not well-adapted to topological dg algebras like
the completed Ginzburg dg algebra which we use. In the appendix, the
first-named author recalls the basics of the theory of pseudocompact
algebras from \cite{Gabriel62}. As shown in
Corollary~\ref{cor:fin-dim-modules-determine-Jacobi-algebra}, the
theory immediately yields a positive answer to question~12.1 of
\cite{DerksenWeymanZelevinsky08}, where the authors asked whether a
Jacobian algebra is determined by its category of finite-dimensional
modules. In section~\ref{ss:extension-to-dg-setting}, the theory of
pseudocompact algebras and modules is then adapted, under certain
boundedness and regularity assumptions, to pseudocompact differential
graded algebras. The analogue of the main theorem also holds in this
setting (section~\ref{ss:pseudocompact-main-theorem}).  Moreover, the
results of Amiot \cite{Amiot08a} on Jacobi-finite quivers with
potentials can be extended to potentials in complete path algebras as
we see in section~\ref{ss:Jacobi-finite-quivers-with-potentials}.

\subsection{Acknowledgments} The first-named author warmly thanks
D.~Simson for his letter \cite{Simson07a} where he points out the
relevance of coalgebras and comodules as well as pseudocompact
algebras and modules in the study of the representations of quivers
with potential. He is deeply grateful to M.~Kontsevich for sharing his
insights and to M.~Kontsevich and Y.~Soibelman for a preliminary
version of \cite{KontsevichSoibelman08}. He thanks T.~Bridgeland
for inspiring talks and conversations and A.~King for kindly
pointing out references to the physics literature. The second-named author
gratefully acknowledges a postdoctoral fellowship granted by the
Universit\'e Pierre et Marie Curie, during which this work was carried
out. He is deeply grateful to the first-named author for patiently
leading him to the beautiful world of derived categories and derived
equivalences and for his consistent support.

\section{Preliminaries}\label{S:preliminaries}

\subsection{Quivers with potential} \label{ss:quivers-with-potential}
We follow \cite{DerksenWeymanZelevinsky08}. 
Let $k$ be a field. Let
$Q$ be a finite quiver (possibly with loops and 2-cycles). We denote
its set of vertices by $Q_0$ and its set of arrows by $Q_1$. For an
arrow $a$ of $Q$, let $s(a)$ denote its source and $t(a)$ denote its
target. The lazy path corresponding to a vertex $i$ will be denoted
by $e_i$.

The {\em complete path algebra $\widehat{kQ}$} is the completion of
the path algebra $kQ$ with respect to the ideal generated by the
arrows of $Q$. Let $\mathfrak{m}$ be the ideal of $\widehat{kQ}$
generated by the arrows of $Q$. A {\em potential} on $Q$ is
an element of the closure $Pot(kQ)$ of the space generated by all non trivial cyclic
paths of $Q$. We say two potentials are \emph{cyclically
equivalent} if their difference is in the closure of the space
generated by all differences $a_1 \ldots a_s - a_2 \ldots a_s a_1$,
where $a_1 \ldots a_s$ is a cycle.

For a path $p$ of $Q$, we define $ \del_p : Pot(kQ) \to \widehat{kQ} $
as the unique continuous linear map which takes a cycle $c$ to the sum
$ \sum_{c=u p v} vu $ taken over all decompositions of the cycle $c$
(where $u$ and $v$ are possibly lazy paths). Obviously two cyclically
equivalent potentials have the same image under $\del_p$.  If $p=a$ is
an arrow of $Q$, we call $\del_a$ the {\em cyclic derivative with
  respect to $a$}.

\begin{remark} The space of cyclic equivalence classes of elements in $Pot(kQ)$
identifies with the {\em reduced continuous Hochschild homology}
$HH_0^{red}(\widehat{kQ})$, the quotient of $\widehat{kQ}$ by the closure
of the subspace generated by the lazy paths and the commutators of elements of $kQ$.
\end{remark}

Let $W$ be a potential on $Q$ such that $W$ is in $\mathfrak{m}^2$ and
no two cyclically equivalent cyclic paths appear in the decomposition
of $W$ (\cite[(4.2)]{DerksenWeymanZelevinsky08}). Then the pair
$(Q,W)$ is called a \emph{quiver with potential}. Two quivers with
potential $(Q,W)$ and $(Q',W')$ are \emph{right-equivalent} if $Q$ and
$Q'$ have the same set of vertices and there exists an algebra
isomorphism $\varphi:\widehat{kQ}\rightarrow \widehat{kQ'}$ whose
restriction on vertices is the identity map and $\varphi(W)$ and
$W'$ are cyclically equivalent. Such an isomorphism $\varphi$ is
called a \emph{right-equivalence}.

\emph{The Jacobian algebra} of a quiver with potential $(Q,W)$,
denoted by $J(Q,W)$, is the quotient of the complete path algebra
$\widehat{kQ}$ by the closure of the ideal generated by $\del_a W$,
where $a$ runs over all arrows of $Q$. It is clear that two
right-equivalent quivers with potential have isomorphic Jacobian
algebras. A quiver with potential is called \emph{trivial} if its
potential is a linear combination of cycles of length $2$ and its
Jacobian algebra is the product of copies of the base field $k$.  A
quiver with potential is called \emph{reduced} if $\del_a W$ is
contained in $\mathfrak{m}^2$ for all arrows $a$ of $Q$.

Let $(Q',W')$ and $(Q'',W'')$ be two quivers with potential such that
$Q'$ and $Q''$ have the same set of vertices. Their \emph{direct sum},
denoted by $(Q',W')\oplus (Q'',W'')$, is the new quiver with potential
$(Q,W)$, where $Q$ is the quiver whose vertex set is the same as the
vertex set of $Q'$ (and $Q''$) and whose arrow set the union of the
arrow set of $Q'$ and the arrow set of $Q''$, and $W=W'+W''$.

\begin{theorem}\label{T:DWZreduction} In the following statements,
all quivers are quivers without loops.
\begin{itemize}
\item[a)] \cite[Proposition 4.4]{DerksenWeymanZelevinsky08}
A quiver with potential
is trivial if and only if it is right-equivalent, via a right-equivalence which takes arrows to linear combinations of arrows, to a quiver with potential $(Q,W)$ such that $Q_1$ consists
of $2n$ distinct arrows
\[Q_1=\{a_1,b_1,\ldots,a_n,b_n\},\]
where each $a_ib_i$ is a 2-cycle, and
\[W=\sum_{i=1}^n a_ib_i.\]

\item[b)] \cite[Proposition 4.5]{DerksenWeymanZelevinsky08} Let $(Q',W')$ be a quiver with potential and
$(Q'',W'')$ be a trivial quiver with potential. Let $(Q,W)$ be their direct sum. Then
the canonical embedding $\widehat{kQ'}\rightarrow \widehat{kQ}$
induces an isomorphism of Jacobian algebras $J(Q',W')\rightarrow
J(Q,W)$.

\item[c)] \cite[Theorem 4.6]{DerksenWeymanZelevinsky08} For any quiver with potential $(Q,W)$, there exist a
trivial quiver with potential $(Q_{\mathrm{tri}},W_{\mathrm{tri}})$ and a reduced
quiver with potential $(Q_{\mathrm{red}},W_{\mathrm{red}})$ such that $(Q,W)$ is
right-equivalent to the direct sum
$(Q_{\mathrm{tri}},W_{\mathrm{tri}})\oplus
(Q_{\mathrm{red}},W_{\mathrm{red}})$. Furthermore, the
right-equivalence class of each of
$(Q_{\mathrm{tri}},W_{\mathrm{tri}})$ and
$(Q_{\mathrm{red}},W_{\mathrm{red}})$ is determined by the
right-equivalence class of $(Q,W)$. We call
$(Q_{\mathrm{tri}},W_{\mathrm{tri}})$ and
$(Q_{\mathrm{red}},W_{\mathrm{red}})$ respectively \emph{the trivial
part} and \emph{the reduced part} of $(Q,W)$.
\end{itemize}
\end{theorem}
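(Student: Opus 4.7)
The plan is to treat the three parts in the order (a), (b), (c), since each relies on the previous. The underlying strategy is the inductive normalization of Derksen-Weyman-Zelevinsky: one performs successive changes of variables on $\widehat{kQ}$ to simplify $W$ while preserving the right-equivalence class, and uses $\m$-adic convergence to turn formal order-by-order constructions into honest continuous algebra automorphisms.

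For part (a), the easy direction is the observation that if $W=\sum_{i=1}^n a_ib_i$ as stated, then $\del_{a_i}W=b_i$ and $\del_{b_i}W=a_i$, so the Jacobian ideal contains all arrows and $J(Q,W)=k^{Q_0}$. For the converse, write $W=W_2+W_{\geq 3}$ with $W_2$ the quadratic part (which is all of $W$ modulo $\m^3$); since $Q$ has no loops, $W$ starts in degree $2$. Triviality means each arrow lies in the closure of the Jacobian ideal, and inspecting degree $1$ of this statement forces the cyclic derivatives of $W_2$ to already span all arrows modulo $\m^2$. This is exactly non-degeneracy of the bilinear form on $\bigoplus_{i,j}e_j(kQ_1)e_i$ given by the coefficients of $W_2$. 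A linear change of variables diagonalizes this form into the standard hyperbolic shape $\sum a_i b_i$, after which one constructs a continuous algebra automorphism order by order that absorbs $W_{\geq 3}$ into $W_2$, using non-degeneracy at each step to solve the arising equations.

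For part (b), write $W=W'+W''$ with $W''=\sum a_ib_i$ in the normal form of (a). The cyclic derivatives with respect to arrows of $Q''$ are $\del_{a_i}W=b_i$ and $\del_{b_i}W=a_i$, while the cyclic derivatives with respect to arrows of $Q'$ equal $\del_aW'$. Thus the closure of the Jacobian ideal of $W$ in $\widehat{kQ}$ is generated by the closure of the Jacobian ideal of $W'$ together with the arrows of $Q''$. Since quotienting $\widehat{kQ}$ by the closed ideal generated by the arrows of $Q''$ returns $\widehat{kQ'}$, the canonical embedding descends to a surjection $J(Q',W')\to J(Q,W)$, and injectivity is immediate because the extra relations involve only arrows outside $\widehat{kQ'}$.

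For part (c), existence proceeds by iterated application of the splitting underlying (a). Decompose $W=W_2+W_{\geq 3}$ and let $V\subseteq kQ_1$ be the largest subspace (compatible with the source-target bigrading) on which the bilinear form coming from $W_2$ is non-degenerate; its orthogonal complement $V'$ provides the arrow space of the reduced part. Applying the diagonalization of (a) to $V$ produces a right-equivalence identifying $(Q,W)$ with the direct sum of a trivial quiver with potential supported on $V$ and a reduced one supported on $V'$; the formal composite of the successive changes of variables converges in the $\m$-adic topology because each correction lies in strictly higher $\m$-power. Uniqueness up to right-equivalence follows from the fact that any right-equivalence between two such decompositions sends quadratic part to quadratic part, hence preserves the non-degenerate block, and therefore restricts to right-equivalences on the trivial and reduced summands. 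The main obstacle in the whole argument is convergence: every inductive construction of an automorphism of $\widehat{kQ}$ must be shown to define a continuous algebra map in the $\m$-adic topology, and this requires carefully tracking the $\m$-adic order of the correction terms at each step, which is the technical heart of \cite[Theorem 4.6]{DerksenWeymanZelevinsky08}.
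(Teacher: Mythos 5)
First, a remark on the comparison itself: the paper does not prove this theorem. It is quoted from Derksen--Weyman--Zelevinsky with explicit references to their Propositions 4.4, 4.5 and Theorem 4.6, so there is no internal proof to measure your sketch against; what can be assessed is whether your reconstruction of the DWZ arguments is sound. Your overall strategy (order-by-order normalization of $W$ by continuous automorphisms of $\widehat{kQ}$, with the quadratic part of $W$ controlling the trivial summand, and $\mathfrak{m}$-adic convergence of the infinite composition) is indeed the one used in the cited source, and parts (a) and (b) are essentially correct. One confusion in (a): by the paper's definition a trivial potential is \emph{already} a linear combination of $2$-cycles, so $W_{\geq 3}=0$ and there is nothing to ``absorb''; the entire content of (a) is that triviality of the Jacobian algebra forces the pairing between opposite arrow spaces given by the coefficients of $W_2$ to be non-degenerate, after which one chooses dual bases. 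The absorption of higher-order terms belongs to (c), where you invoke it in the right place.

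The genuine gap is in the uniqueness part of (c). You assert that a right-equivalence between two decompositions ``sends quadratic part to quadratic part, hence preserves the non-degenerate block, and therefore restricts to right-equivalences on the trivial and reduced summands.'' The first clause is correct (only the linear part of $\varphi$ acts on $W_2$ up to cyclic equivalence, since cycles of length $\geq 3$ are sent into $\mathfrak{m}^3$), but the conclusion does not follow: the linear part of $\varphi$ need not carry the chosen non-degenerate block onto the chosen non-degenerate block of the target (only onto some complement of the radical of the form), and the higher-order terms of $\varphi$ send the trivial arrows to arbitrary elements of $\mathfrak{m}^2$ involving the reduced arrows, so $\varphi$ does not literally restrict to the two summands. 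One must \emph{construct}, from the given right-equivalence of the direct sums, a new right-equivalence of the reduced parts, by composing with further automorphisms that straighten $\varphi$ block by block; this is the delicate step in Derksen--Weyman--Zelevinsky's proof of their Theorem 4.6, and as written your argument assumes its conclusion at exactly this point.
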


\subsection{Mutations of quivers with potential}\label{S:mutation}
We follow \cite{DerksenWeymanZelevinsky08}.
Let $(Q,W)$ be a quiver with potential. Let $i$ be a vertex of $Q$.
Assume the following conditions:

\begin{itemize}

\item[(c1)] the quiver $Q$ has no loops;

\item[(c2)] the quiver $Q$ does not have 2-cycles at $i$;

\item[(c3)] no cyclic path occurring in the expansion of $W$ starts
and ends at $i$ (\cite[(5.2)]{DerksenWeymanZelevinsky08}).
\end{itemize}
Note that under the condition (c1), any potential is cyclically
equivalent to a potential satisfying (c3).
We define a new quiver with potential $\tilde{\mu}_{i}(Q,W)=(Q',W')$ as follows. The
new quiver $Q'$ is obtained from $Q$ by
\begin{itemize}
\item[Step 1] For each arrow $\beta$ with target $i$ and each
arrow $\alpha$ with source $i$, add a new arrow $[\alpha\beta]$ from the source of
$\beta$ to the target of $\alpha$.
\item[Step 2] Replace each arrow $\alpha$ with source or target $i$ with
an arrow $\alpha^\star$ in the opposite direction.
\end{itemize}
The new potential $W'$ is the sum of two potentials $W'_1$ and
$W'_2$. The potential $W'_1$ is obtained from $W$ by replacing each composition
$\alpha\beta$ by $[\alpha\beta]$, where $\beta$ is an arrow
with target $i$. The potential $W'_2$ is given by
\[W'_2=\sum_{\alpha,\beta}[\alpha\beta]\beta^{\star}\alpha^{\star},\]
where the sum ranges over all pairs of arrows $\alpha$ and $\beta$
such that $\beta$ ends at $i$ and $\alpha$ starts at $i$. It is easy
to see that $\tilde{\mu}_i(Q,W)$ satisfies (c1) (c2) (c3).
We define $\mu_i(Q,W)$ as the reduced part of $\tilde{\mu}_i(Q,W)$,
and call $\mu_i$ the \emph{mutation at the vertex~$i$}.

\begin{theorem}
\begin{itemize}
 \item [a)] \cite[Theorem 5.2]{DerksenWeymanZelevinsky08} The
right-equivalence class of $\tilde{\mu}_i(Q,W)$ is determined by the
right-equivalence class of $(Q,W)$.

\item[b)] \cite{DerksenWeymanZelevinsky08} The quiver with potential $\tilde\mu_i^2(Q,W)$ is right-equivalent to the direct sum of $(Q,W)$ with a trivial quiver with potential.

\item[c)] \cite[Theorem 5.7]{DerksenWeymanZelevinsky08} The correspondence
$\mu_i$ acts as an involution on the right-equivalence classes of
reduced quivers with potential satisfying (c1) and (c2).
\end{itemize}
\end{theorem}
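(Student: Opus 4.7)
My plan for part (a) is to lift any right-equivalence $\varphi:\widehat{kQ}\to\widehat{kQ}$ that sends $W$ to a cyclic equivalent of $W''$ to a right-equivalence $\tilde\varphi$ between the complete path algebras of $\tilde\mu_i(Q,W)$ and $\tilde\mu_i(Q,W'')$. Since the underlying quiver $\tilde\mu_i(Q)$ depends only on $Q$, the source and target of $\tilde\varphi$ are the same complete path algebra, so the task is to specify $\tilde\varphi$ on generators: on arrows $\gamma$ not incident to $i$, set $\tilde\varphi(\gamma)=\varphi(\gamma)$; keep the reversed arrows $\alpha^\star$ and $\beta^\star$ fixed (they are determined by $Q$ alone); and define $\tilde\varphi([\alpha\beta])$ by formally expanding the product $\varphi(\alpha)\varphi(\beta)\in\widehat{kQ}$ and substituting the new generator $[\gamma_1\gamma_2]$ for each composition $\gamma_1\gamma_2$ passing through $i$. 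Condition (c3), which one can assume by cyclic equivalence under (c1), ensures that no cycle of $W$ passes through $i$, so the behaviour of $W_1'$ under $\tilde\varphi$ is controlled directly by $\varphi(W)$. A direct computation then verifies that $\tilde\varphi(W')$ is cyclically equivalent to the potential of $\tilde\mu_i(Q,W'')$.

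For part (b), my plan is to compute $\tilde\mu_i^2(Q,W)$ explicitly. After one application of $\tilde\mu_i$, the arrows incident to $i$ are the reversed $\alpha^\star$ (ending at $i$) and $\beta^\star$ (starting at $i$), together with composites $[\alpha\beta]$ which avoid $i$. Applying $\tilde\mu_i$ a second time restores arrows $\alpha^{\star\star},\beta^{\star\star}$ with the same sources and targets as $\alpha,\beta$, and introduces new composites $[\beta^\star\alpha^\star]$ that form $2$-cycles with the $[\alpha\beta]$. The resulting potential decomposes as a copy of $W$ (in which $\alpha,\beta$ have been replaced by $\alpha^{\star\star},\beta^{\star\star}$) plus a sum of terms of the form $[\alpha\beta]\,[\beta^\star\alpha^\star]$ coming from iterating $W_2'$ and from its interaction with $W_1'$. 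An explicit $\mathfrak{m}$-adically convergent change of variables then separates this into a direct sum of $(Q,W)$ and a trivial quiver with potential of the form $\sum a_ib_i$ as in Theorem~\ref{T:DWZreduction}(a). The main obstacle lies here: one must carefully track, in a triangular inductive fashion respecting the $\mathfrak{m}$-adic topology, the cross terms produced by substitution, and then choose automorphisms of $\widehat{kQ''}$ removing them from the potential, one degree at a time, so that all 2-cycles $[\alpha\beta]\,[\beta^\star\alpha^\star]$ appear with coefficient $1$ and nothing else survives in the trivial part.

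Part (c) then follows by combining (a) and (b) with the uniqueness statement of Theorem~\ref{T:DWZreduction}(c). By (a), $\tilde\mu_i$ descends to a well-defined map on right-equivalence classes of quivers with potential satisfying (c1), (c2), (c3), and taking reduced parts (which by Theorem~\ref{T:DWZreduction}(c) is well-defined on right-equivalence classes) makes $\mu_i$ well-defined on the classes of reduced quivers with potential satisfying (c1) and (c2). For such a $(Q,W)$, part (b) gives $\tilde\mu_i^2(Q,W)$ right-equivalent to $(Q,W)\oplus(\text{trivial})$. Since $(Q,W)$ is itself already reduced, the uniqueness of the reduced part in Theorem~\ref{T:DWZreduction}(c) identifies the reduced part of this direct sum with $(Q,W)$ up to right-equivalence, yielding $\mu_i^2(Q,W)\sim(Q,W)$ and hence that $\mu_i$ is an involution.
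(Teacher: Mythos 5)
This theorem is not proved in the paper at all: all three parts are quoted from Derksen--Weyman--Zelevinsky, so the paper's ``proof'' is the citation. Your proposal is thus an attempt to reconstruct the DWZ argument, and while the architecture (lift right-equivalences through the premutation; compute $\tilde\mu_i^2$ and split off a trivial summand; combine with the uniqueness of the reduced part) is the correct one, there are concrete gaps.

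In (a), the prescription ``keep the reversed arrows $\alpha^\star$, $\beta^\star$ fixed'' fails in general. If the linear part of $\varphi$ mixes the arrows at $i$ (say $\varphi(\alpha)=\alpha+\alpha_2$ for two parallel arrows out of $i$), then $\tilde\varphi([\alpha\beta])$ involves $[\alpha_2\beta]$, and $\tilde\varphi(W'_2)=\sum\tilde\varphi([\alpha\beta])\,\beta^\star\alpha^\star$ is no longer cyclically equivalent to $\sum[\alpha\beta]\beta^\star\alpha^\star$: the starred arrows must be transformed contragrediently to the action of $\varphi$ on the arrow span at $i$, with higher-order corrections absorbed elsewhere, and this is precisely the delicate point of DWZ's proof. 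Relatedly, (c3) does \emph{not} say that no cycle of $W$ passes through $i$ --- it says no cycle in the chosen expansion \emph{starts and ends} at $i$; the cycles passing through $i$ are exactly those on which the substitution $\alpha\beta\mapsto[\alpha\beta]$ acts, and (c3) only guarantees that no passage through $i$ sits at the wrap-around point of the cyclic word. In (b) you explicitly defer the inductive change of variables separating $(Q,W)$ from the trivial part, which is where essentially all the work lies, so this part is a plan rather than a proof. Finally, in (c), $\mu_i^2(Q,W)$ is by definition the reduced part of $\tilde\mu_i$ applied to the \emph{reduced part} of $\tilde\mu_i(Q,W)$, not the reduced part of $\tilde\mu_i^2(Q,W)$; to identify the two you also need that $\tilde\mu_i$ of a direct sum with a trivial summand (which, thanks to (c2), carries no arrows at $i$) is again such a direct sum up to right-equivalence. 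That compatibility is a separate lemma of DWZ which your argument uses silently.
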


\subsection{The complete Ginzburg dg algebra}\label{ss:ginzburgalg}
Let $(Q,W)$ be a quiver with potential.
The {\em complete Ginzburg dg algebra $\hat{\Gamma}(Q,W)$} is
constructed as follows \cite{Ginzburg06}: Let $\tilde{Q}$ be the
graded quiver with the same vertices as $Q$ and whose arrows are
\begin{itemize}
\item the arrows of $Q$ (they all have degree~$0$),
\item an arrow $a^*: j \to i$ of degree $-1$ for each arrow $a:i\to j$ of $Q$,
\item a loop $t_i : i \to i$ of degree $-2$ for each vertex $i$
of $Q$.
\end{itemize}
The underlying graded algebra of $\hat{\Gamma}(Q,W)$ is the
completion of the graded path algebra $k\tilde{Q}$ in the category
of graded vector spaces with respect to the ideal generated by the
arrows of $\tilde{Q}$. Thus, the $n$-th component of
$\hat{\Gamma}(Q,W)$ consists of elements of the form
$\sum_{p}\lambda_p p$, where $p$ runs over all paths of degree $n$.
The differential of $\hat{\Gamma}(Q,W)$ is the unique continuous
linear endomorphism homogeneous of degree~$1$ which satisfies the
Leibniz rule
\[
d(uv)= (du) v + (-1)^p u dv \ko
\]
for all homogeneous $u$ of degree $p$ and all $v$, and takes the
following values on the arrows of $\tilde{Q}$:
\begin{itemize}
\item $da=0$ for each arrow $a$ of $Q$,
\item $d(a^*) = \del_a W$ for each arrow $a$ of $Q$,
\item $d(t_i) = e_i (\sum_{a} [a,a^*]) e_i$ for each vertex $i$ of $Q$, where
$e_i$ is the lazy path at $i$ and the sum runs over the set of
arrows of $Q$.
\end{itemize}

\begin{remark} The authors thank N.~Broomhead for pointing out
that Ginzburg's original definition in section~5.3
of \cite{Ginzburg06} concerns the case where the potential $W$ lies
in the non completed path algebra $kQ$ and that he completes
with respect to the ideal generated by the cyclic derivatives
of $W$ rather than the ideal generated by the arrows of $Q$.
\end{remark}

The following lemma is an easy consequence of the definition.

\begin{lemma}
  Let $(Q,W)$ be a quiver with potential. Then the Jacobian algebra
  $J(Q,W)$ is the 0-th cohomology of the complete Ginzburg dg algebra
  $\hat{\Gamma}(Q,W)$, \ie
\[
J(Q,W)=H^0\hat{\Gamma}(Q,W).
\]
\end{lemma}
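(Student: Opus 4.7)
The plan is to compute $H^0\hat{\Gamma}(Q,W)$ directly from the definitions by identifying the degree~$0$ and degree~$-1$ components of $\hat{\Gamma}(Q,W)$ and then unpacking the action of the differential from degree $-1$ to degree $0$.

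First I would note that since the graded quiver $\tilde{Q}$ has only arrows in degrees $0$, $-1$ and $-2$, the component in degree~$0$ consists exactly of paths using only the degree $0$ arrows of $\tilde{Q}$, so that
\[
\hat{\Gamma}(Q,W)^{0}=\widehat{kQ}.
\]
Similarly, a path in $\tilde{Q}$ has degree $-1$ if and only if it contains exactly one arrow of the form $a^*$ and otherwise only arrows of $Q$; thus every element of $\hat{\Gamma}(Q,W)^{-1}$ is a (possibly infinite) sum of elements of the form $u a^* v$, with $u, v$ paths in $Q$ and $a$ an arrow of $Q$.

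Next I would apply the Leibniz rule to such an element. Since $du=0$ and $dv=0$ (as $u,v$ live in degree~$0$) and $d(a^*)=\del_a W$, we get
\[
d(u a^* v) = u\, (\del_a W)\, v.
\]
Hence, as $u,v$ and $a$ vary, the image of $d\colon \hat{\Gamma}(Q,W)^{-1}\to\hat{\Gamma}(Q,W)^{0}$ is precisely the closure, in $\widehat{kQ}$, of the two-sided ideal generated by $\{\del_a W\mid a\in Q_1\}$; closure is automatic since $\hat{\Gamma}(Q,W)^{-1}$ is a pseudo-compact module and $d$ is continuous. (The passage from linear combinations of the $u a^* v$ to arbitrary elements of $\hat{\Gamma}^{-1}$ is a minor but not completely routine point: one has to check that the completion is compatible with taking images, which is the only potential subtlety and the place to exercise some care.)

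Finally, taking the quotient gives
\[
H^0\hat{\Gamma}(Q,W) = \widehat{kQ}/\overline{\langle \del_a W\mid a\in Q_1\rangle} = J(Q,W),
\]
which is the stated identification.
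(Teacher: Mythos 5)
Your proposal is correct, and it is exactly the computation the paper has in mind: the paper gives no proof at all, declaring the lemma ``an easy consequence of the definition,'' and your identification of $\hat{\Gamma}(Q,W)^{0}=\widehat{kQ}$, of $\hat{\Gamma}(Q,W)^{-1}$ as the closed span of the $ua^*v$, and of $d(ua^*v)=u(\del_a W)v$ is that easy consequence spelled out. The one subtlety you flag is genuine but is resolved exactly as you indicate: $\hat{\Gamma}(Q,W)^{-1}$ is linearly compact (a product of copies of $k$ indexed by paths of degree $-1$), so the image of the continuous map $d$ is linearly compact, hence closed in $\widehat{kQ}$, and therefore coincides with the closure of the two-sided ideal generated by the $\del_a W$ — this is the same circle of ideas as the Mittag--Leffler lemma and the pseudocompactness facts recalled in the paper's appendix.
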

\bigskip

Let $(Q,W)$ and $(Q',W')$ be two quivers with potential. Let
$\Gamma=\hat{\Gamma}(Q,W)$ and $\Gamma'=\hat\Gamma(Q',W')$ be the
associated complete Ginzburg dg algebras.

\begin{lemma} Assume $(Q,W)$ and $(Q',W')$ are right-equivalent. Then the
complete Ginzburg dg algebras $\Gamma$ and $\Gamma'$ are isomorphic
to each other.
\end{lemma}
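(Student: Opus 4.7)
The plan is to extend the right-equivalence $\varphi$ to a continuous dg algebra isomorphism $\tilde\varphi:\Gamma\to\Gamma'$. The underlying graded algebra of $\Gamma$ is the completed tensor algebra over $R=k^{Q_0}$ on three families of generators --- the arrows $a$ of $Q$ in degree $0$, the duals $a^*$ in degree $-1$, and the loops $t_i$ in degree $-2$ --- so a continuous $R$-algebra morphism out of $\Gamma$ is freely prescribed by its values on these generators, and compatibility with $d$ needs only to be checked on them.

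Taking $\tilde\varphi(a)=\varphi(a)$, the remaining conditions to impose are $d\tilde\varphi(a^*)=\varphi(\partial_a W)$ and $d\tilde\varphi(t_i)=e_i\sum_a[\varphi(a),\tilde\varphi(a^*)]e_i$. The first is solvable because $\varphi(W)$ is cyclically equivalent to $W'$: cyclic derivatives depend only on the cyclic class, so $\varphi$ maps the Jacobian ideal of $(Q,W)$ onto that of $(Q',W')$, and the latter coincides with $\im(d:\Gamma'_{-1}\to\Gamma'_0)$. For the second, a direct computation using the Leibniz rule, the previously-arranged relation $d\tilde\varphi(a^*)=\varphi(\partial_a W)$, and the identity $e_i\sum_a[a,\partial_a W]e_i=0$ (the $d^2=0$ check on $t_i$ in $\Gamma$) shows the right-hand side is a $d$-cycle in $\Gamma'_{-1}$; one then picks a preimage under $d:\Gamma'_{-2}\to\Gamma'_{-1}$ and takes it to be $\tilde\varphi(t_i)$. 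All lifts are to be chosen inductively along the filtration by powers of the arrow ideal of $\tilde Q'$, so the infinite sums converge automatically.

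To upgrade $\tilde\varphi$ to an isomorphism, the plan is to apply the same construction to $\varphi^{-1}$ and verify that, modulo the arrow ideal of $\tilde Q'$, the two composites reduce to the identity; completeness of $\Gamma$ and $\Gamma'$ then forces bijectivity. The hard part will be exhibiting the $d$-preimage needed to define $\tilde\varphi(t_i)$: the cycle $e_i\sum_a[\varphi(a),\tilde\varphi(a^*)]e_i$ could a priori represent a nonzero class in $H^{-1}(\Gamma')$. I expect to handle this by choosing the $\tilde\varphi(a^*)$ inductively along the arrow-ideal filtration so that at each finite stage the obstruction is killed by the same algebraic mechanism that makes $d(t_i)$ well-defined in $\Gamma$ itself; a more conceptual alternative would be to give closed-form expressions for $\tilde\varphi(a^*)$ and $\tilde\varphi(t_i)$ in terms of Van den Bergh-style double derivations attached to $\varphi$.
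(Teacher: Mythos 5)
Your overall strategy (extend $\varphi$ to the Ginzburg algebra by prescribing images of the generators and checking compatibility with $d$ only on generators) is the same as the paper's, but you replace the paper's explicit formulas by an obstruction-theoretic lifting argument, and that is exactly where the proof has a genuine gap. You correctly observe that $e_i\sum_a[\varphi(a),\tilde\varphi(a^*)]e_i$ is a $d$-cycle in degree $-1$, but being a cycle is not enough: you need it to be a boundary, and $H^{-1}(\Gamma')$ does not vanish in general (the whole point of the paper is that the Ginzburg algebra has nontrivial homology below degree $0$). Worse, the obstruction class genuinely depends on your choice of the lifts $\tilde\varphi(a^*)$: if you replace a valid choice by $\tilde\varphi(a^*)+z_a$ with $dz_a=0$, the candidate cycle changes by $e_i\sum_a[\varphi(a),z_a]e_i$, which need not be exact. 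So ``pick a preimage of $\varphi(\partial_a W)$ under $d$'' is not a well-posed step; you must pick the \emph{right} preimage, and your proposal only announces an inductive scheme for doing so without carrying it out. The paper resolves this by writing a closed formula for $\varphi_*(\rho^*)$ --- a noncommutative chain rule built from the expansion $\varphi^{-1}(\rho')=\sum_p b_{p,\rho'}\,p$, inserting $\rho'^*$ at each occurrence of $\rho$ in $p$ with the appropriate conjugating factors --- for which one checks directly that $d\varphi_*(\rho^*)=\varphi(\partial_\rho W)$ and that $\varphi_*(t_i)=t'_i$ works on the nose, so no obstruction ever arises.

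There is a second, related gap in your invertibility argument. A continuous endomorphism of the completed graded path algebra fixing the idempotents is an isomorphism iff the induced map on $\m/\m^2$ (equivalently, on the span of the arrows of $\tilde Q$) is invertible. With abstractly chosen $d$-preimages you have no control over the linear parts of $\tilde\varphi(a^*)$ in the $a'^*$-directions, nor over the $t'_i$-component of $\tilde\varphi(t_i)$; a priori these could vanish, in which case $\tilde\varphi$ fails to be surjective, and composing with the analogous map for $\varphi^{-1}$ does not rescue this. In the paper's proof the induced linear map is visibly block-triangular with diagonal blocks the linear part of $\varphi$, the matrix $(b_{\rho,\rho'})$ of the linear part of $\varphi^{-1}$, and the identity on the $t_i$, all invertible. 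To repair your argument you would need to show that the lifts can be chosen with prescribed (invertible) linear parts, which essentially forces you back to the explicit formulas.
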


\begin{proof}
Let $\varphi$ be a right-equivalence from $(Q,W)$ to $(Q',W')$. Let
$\varphi_{*}:\Gamma\to\Gamma'$ be the unique continuous algebra
homomorphism such that its restriction to $Q_0$ is the identity and
for $\rho\in Q_1$ and $i\in Q_0$
\begin{eqnarray*}
\varphi_{*}(\rho)&=&\varphi(\rho)\\
\varphi_{*}(\rho^*)&=&\sum_{\rho'\in Q'_1}\sum_{p}\sum_{1\leq j\leq
l(p)}b_{p,\rho'}\delta_{\rho,\rho_j^p}\varphi(\rho_{j+1}^p\cdots\rho_{l(p)}^p)\rho'^*\varphi(\rho_1^p\cdots\rho_{j-1}^p)\\
\varphi_{*}(t_i)&=&t'_i,
\end{eqnarray*}
where the middle summation is over all paths
$p=\rho_1^p\cdots\rho_{l(p)}^p$ ($\rho_j^p\in Q_1$) of $Q$, $\delta$
is the Kronecker symbol, and for $\rho'\in Q'_1$
\[\varphi^{-1}(\rho')=\sum_{p:\text{path of }Q}b_{p,\rho'}p,\]
and $t'_i$ is the loop of $\tilde{Q'}$ of degree -2 at the vertex $i$.

It is straightforward to prove that $\varphi_*$ commutes with the
differentials. Thus $\varphi_*$ is a homomorphism of dg algebras.
Moreover, $\varphi_*$ induces a linear map from $k\tilde{Q}_1$ (the
$k$-vector space with basis the arrows of $\tilde{Q}$) to
$k\tilde{Q'}_1$, whose matrix is invertible. Indeed, this matrix is
quasi-diagonal with diagonal blocks the matrix of the linear map
from $kQ_1$ to $kQ'_1$ induced from $\varphi$, the matrix
$(b_{\rho,\rho'})_{\rho\in Q_1,\rho'\in Q'_1}$ (which we read from
the definition of $\varphi_*(\rho^*)$), and the identity matrix of
size the cardinality of $Q_0$ (which we read from the definition of
$\varphi(t_i)$), all the three of which are invertible. Therefore
$\varphi_*$ defined as above is an isomorphism of dg algebras from
$\Gamma$ to $\Gamma'$.
\end{proof}

\begin{lemma}\label{L:reduction} Suppose that $(Q,W)$ is the direct sum of $(Q',W')$ and $(Q'',W'')$
and that $(Q'',W'')$ is trivial. Then the canonical projection
$\Gamma\to\Gamma'$ induces quasi-isomorphisms
\[
\Gamma/\m^n \to \Gamma'/\m'^n
\]
for all $n\geq 1$ and a quasi-isomorphism $\Gamma\to \Gamma'$.
\end{lemma}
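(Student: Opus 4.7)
By Theorem~\ref{T:DWZreduction}~a), I reduce to the case where $Q''_1=\{a_1,b_1,\ldots,a_N,b_N\}$ consists of $2$-cycles with $W''=\sum_{i=1}^N a_ib_i$. Since the arrow sets $Q'_1$ and $Q''_1$ are disjoint, we have $\del_a W=\del_a W'$ for $a\in Q'_1$, while $\del_{a_i}W=b_i$ and $\del_{b_i}W=a_i$. I then define $\pi\colon\Gamma\to\Gamma'$ on generators by preserving arrows of $Q'$ and their duals, killing the arrows and duals coming from $Q''$, and sending $t_i\mapsto t'_i$. Commutativity with $d$ is checked on generators: on duals $a^*$ for $a\in Q'_1$ it uses $\del_a W=\del_a W'$; on the new generators both sides vanish; on the loops one splits
\[
d(t_i)\;=\;\sum_{a\in Q'_1}e_i[a,a^*]e_i\;+\;\sum_{j=1}^N e_i\bigl([a_j,a_j^*]+[b_j,b_j^*]\bigr)e_i,
\]
where the first sum maps under $\pi$ to $d'(t'_i)$ and the second lies in $\ker\pi$. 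Since $d$ preserves $\m$ (arrows go to $0$, duals into $\m$, loops into $\m^2$), each $\m^n$ is a dg ideal and $\pi$ descends to dg algebra maps $\pi_n\colon\Gamma/\m^n\to\Gamma'/\m'^n$ between finite-dimensional dg algebras.

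The core of the argument is to show that each $\pi_n$ is a quasi-isomorphism, by proving its kernel $K_n$ is acyclic. I filter $K_n$ by the induced $\m$-adic filtration, which is bounded since $\m^n=0$ in $\Gamma/\m^n$. On the associated graded the differential simplifies: $d(t_i)\in\m^2$ contributes nothing, and $\bar d(a^*)$ retains only the length-$1$ part of $\del_aW$. The underlying graded path algebra admits the free-product decomposition
\[
T_R(U\oplus V)=\bigoplus_{w\geq 0}T_R(U)\otimes_R\bigl(V\otimes_R T_R(U)\bigr)^{\otimes_R w},
\]
with $R=k^{Q_0}$, $U$ the $R$-bimodule of arrows, duals and loops of $\tilde{Q'}$, and $V=\bigoplus_j V_j$ where $V_j=k\langle a_j,b_j,a_j^*,b_j^*\rangle$. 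This decomposition survives the quotient by paths of length $\geq n$ and is respected by $\bar d$ length-piece by length-piece. Each $V_j$ is an acyclic complex under $d(a_j^*)=b_j$, $d(b_j^*)=a_j$, so by the K\"unneth formula over the semisimple ring $R$, every summand with $w\geq 1$ is acyclic. Thus $\operatorname{gr}(K_n)$ is acyclic, and convergence of the bounded spectral sequence of the $\m$-adic filtration yields acyclicity of $K_n$.

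For the global claim, $K:=\ker\pi=\varprojlim_n K_n$ by $\m$-adic completeness, and the Milnor short exact sequence combined with the Mittag-Leffler property from the surjective transitions $K_{n+1}\twoheadrightarrow K_n$ gives $H^*(K)=\varprojlim H^*(K_n)=0$; hence the long exact sequence attached to $0\to K\to\Gamma\to\Gamma'\to 0$ shows that $\pi$ is a quasi-isomorphism. The main obstacle is the third step: the loops $t_i$ couple the trivial pairs $(a_j,b_j)$ to the rest of $\Gamma$ via the terms $[a_j,a_j^*]+[b_j,b_j^*]$ in $d(t_i)$, so no direct dg splitting of $\pi$ is available. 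The saving grace is that this coupling lies in $\m^2$ and therefore disappears on the associated graded of the $\m$-adic filtration, reducing everything to a clean K\"unneth acyclicity check for the standard four-term contractible complex $V_j$.
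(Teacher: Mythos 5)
Your argument is correct and follows essentially the same route as the paper: both filter the kernel of $\Gamma\to\Gamma'$ by path length, identify the associated graded pieces inside the tensor algebra $T_R(U\oplus V)$ as sums of tensor products each containing the contractible complex $V$ coming from the trivial part, and pass to the limit over $n$ via Mittag--Leffler. Your preliminary reduction to the normal form of Theorem~\ref{T:DWZreduction}~a) and the spectral-sequence packaging are harmless variants of the paper's direct contractibility argument for the finite filtration of $K/K_n$.
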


\begin{proof} Let $K$ be the kernel of the projection from $\Gamma$
onto $\Gamma'$. For $i\geq 1$, let $K_i\subset K$ be $\m^i\cap K$.
Thus, $K_i$ is the topological span of the paths of length $\geq i$
with at least one arrow in $Q''$ or $Q''^*$. Then the $K_i$ form a
decreasing filtration of $K$ and the complex $K_i/K_{i+1}$ is
isomorphic to
\[
(U\oplus V)^{\ten_R i}/U^{\ten_R i}\ko
\]
where $R=\prod_{i\in Q_0} k$, $V=kQ_1''\oplus kQ_1''^\ast$ as a
complex endowed with the differential given by $W''$ (thus, $V$ is
contractible) and $U=kQ_1'\oplus kQ_1'^\ast\oplus k\{t_i|i\in Q_0\}$
as a complex with the differential which is zero on $k\{t_i|i\in
Q_0\}$ and, on $kQ_1'\oplus kQ_1'^\ast$, is given by the projection
of $W$ onto the subspace of cycles of length $2$. Thus $K_i/K_{i+1}$
is contractible. It follows that $K/K_n$ is contractible for each
$n\geq 1$, which implies the first assertion. The second one now
follows from the Mittag-Leffler lemma~\ref{lemma:Mittag-Leffler}.
\end{proof}

\begin{remark} As a consequence of Lemma~\ref{L:reduction}, the
canonical projection $\hat{kQ}\rightarrow \hat{kQ'}$ induces an
isomorphism of Jacobian algebras $J(Q,W)\rightarrow J(Q',W')$, which
is the inverse of the isomorphism in Theorem~\ref{T:DWZreduction}
b).\end{remark}

\subsection{Derived categories} \label{ss:derived-categories}
We follow~\cite{Keller94}.
Let $A$ be a dg $k$-algebra with differential
$d_A$ (all differentials are of degree~1).
A \emph{(right) dg module} $M$ over $A$ is a graded $A$-module
equipped with a differential $d_M$ such that
\[d_M(ma)=d_M(m)a+(-1)^{|m|}md_A(a)\]
where $m$ in $M$ is homogeneous of degree $|m|$, and $a\in A$.

Given two dg $A$-modules $M$ and $N$, we define the \emph{morphism
complex} to be the graded $k$-vector space $\mathcal{H}om_A(M,N)$
whose $i$-th component $\mathcal{H}om_A^i(M,N)$ is the subspace of
the product $\prod_{j\in\mathbb{Z}}\Hom_k(M^j,N^{j+i})$ consisting
of morphisms $f$ such that \[f(ma)=f(m)a,\] for all $m$ in $M$ and
all $a$ in $A$, together with the differential $d$ given by
\[d(f)=f\circ d_M-(-1)^{|f|}d_N\circ f\]
for a homogeneous morphism $f$ of degree $|f|$.

The category $\mathcal{C}(A)$ of dg $A$-modules is the category
whose objects are the dg $A$-modules, and whose morphisms are the
0-cycles of the morphism complexes. This is an abelian category and
a Frobenius category for the conflations which are split exact as
sequences of graded $A$-modules. Its stable category
$\mathcal{H}(A)$ is called the \emph{homotopy category} of dg
$A$-modules, which is equivalently defined as the category whose
objects are the dg $A$-modules and whose morphism spaces are the
$0$-th homology groups of the morphism complexes. The homotopy
category $\mathcal{H}(A)$ is a triangulated category whose suspension
functor $\Sigma$ is the shift of dg modules $M \mapsto M[1]$. The \emph{derived
category} $\mathcal{D}(A)$ of dg $A$-modules is the localization of
$\mathcal{H}(A)$ at the full subcategory of acyclic dg $A$-modules.
A short exact sequence
\[\xymatrix{0\ar[r]&M\ar[r]&N\ar[r]&L\ar[r]&0}\]
in $\mathcal{C}(A)$ yields a triangle
\[\xymatrix{M\ar[r]&N\ar[r]&L\ar[r]&\Sigma M}\]
in $\mathcal{D}(A)$.
A dg $A$-module $P$ is \emph{cofibrant} if
\[\Hom_{\mathcal{C}(A)}(P,L)\stackrel{s_*}{\longrightarrow}\Hom_{\mathcal{C}(A)}(P,M)\]
is surjective for each quasi-isomorphism $s:L\rightarrow M$ which is
surjective in each component. We use the term ``cofibrant'' since
these are actually the objects which are cofibrant for a certain
structure of Quillen model category on the category $\cc(A)$,
\confer \cite[Theorem 3.2]{Keller06d}.

\begin{proposition} A dg $A$-module is cofibrant if and only
if it is a direct summand of a dg module $P$ which admits a filtration
\[
\ldots\subset F_{p-1}\subset F_{p}\subset\ldots\subset P,~~p\in\mathbb{Z}
\]
in $\mathcal{C}(A)$ such that
\begin{itemize}
 \item[(F1)] we have $F_p=0$ for all $p\ll 0$, and $P$ is the union of the $F_p$, $p\in\mathbb{Z}$;
 \item[(F2)] the inclusion morphism $F_{p-1}\subset F_{p}$ splits in the category of graded $A$-modules, for all $p\in\mathbb{Z}$;
\item[(F3)] the subquotient $F_p/F_{p-1}$ is isomorphic in $\mathcal{C}(A)$ to a direct summand of a direct sum of shifted copies of $A$, for all $p\in\mathbb{Z}$.
\end{itemize}
\end{proposition}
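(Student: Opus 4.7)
The plan is to prove the two implications separately. For the ``if'' direction I first observe that a direct summand of a cofibrant module is cofibrant (given $P = M \oplus M'$, any $f : M \to N$ extends by zero to $P \to N$, lifts by hypothesis to a map into $L$, and restricts to the desired lift of $f$), so it suffices to show that any $P$ carrying a filtration $(F_p)_{p \in \Z}$ satisfying (F1)--(F3) is itself cofibrant. Given a quasi-isomorphism $s : L \to N$ which is surjective in each component and a morphism $f : P \to N$, I construct a lift $\tilde f : P \to L$ by induction as a compatible family $\tilde f_p : F_p \to L$. The base case is vacuous by (F1). For the inductive step, (F2) provides a graded splitting $F_p = F_{p-1} \oplus Q_p$ with $Q_p \cong F_p/F_{p-1}$, and (F3) realises $Q_p$ as a direct summand of a direct sum of shifted copies of $A$. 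The lifting problem on $Q_p$ is then governed by the single question of lifting a cycle in $N$ to a cycle in $L$ prescribed modulo a coboundary coming from the failure of the graded splitting to commute with the differential. This uses surjectivity of $s$ (to produce a preimage in $L$) and acyclicity of $\ker s$ (to correct the preimage into a cycle). Finally (F1) lets us assemble $\tilde f = \colim \tilde f_p$.

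For the ``only if'' direction, I construct, for any dg $A$-module $M$, a filtered module $P$ satisfying (F1)--(F3) together with a surjective quasi-isomorphism $\pi : P \to M$. The module $P$ is built as a union $\bigcup_{p \geq 0} F_p$ by a standard small-object procedure: take $F_0$ to be a direct sum of shifted copies of $A$ mapping onto $M$ as a graded module (one summand $A[n]$ per homogeneous element of $M$), and inductively enlarge $F_{p-1}$ to $F_p$ by adjoining shifted copies of $A$ whose boundaries kill cycles of $F_{p-1}$ whose images in $M$ are coboundaries, and whose images in $M$ cover cycles of $M$ not yet hit on cohomology. By construction each inclusion $F_{p-1} \hookrightarrow F_p$ splits as a graded map with free quotient, and the resulting map $\pi : P \to M$ is a surjective quasi-isomorphism. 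Since $M$ is cofibrant, $\id_M : M \to M$ lifts along $\pi$ to a section $\sigma : M \to P$, exhibiting $M$ as a direct summand of $P$.

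The main technical obstacle lies in the inductive lifting step of the ``if'' direction, where one must isolate the precise ``defect'' cocycle in $\ker s$ measuring the failure of the naive graded lift to be a chain map, and show that it is a coboundary in $\ker s$; this is where the combination of surjectivity of $s$ and acyclicity of its kernel gets used in an essential way. The converse construction is then essentially formal once the indexing of cells to adjoin at each stage is chosen correctly, and properties (F2) and (F3) are transparent by design.
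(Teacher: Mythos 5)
The paper itself does not prove this proposition; it is quoted from \cite{Keller94} and \cite{Keller06d}, so I am measuring your argument against the standard one. Your ``if'' direction is essentially that standard argument and is fine: direct summands of cofibrant modules are cofibrant by the extend-by-zero trick, and for a filtered $P$ one lifts inductively, using that $F_p/F_{p-1}$ is graded-projective (to produce a graded preimage through the componentwise surjection $s$) and that $\mathcal{H}om_A(F_p/F_{p-1},\ker s)$ is acyclic (to correct the defect cocycle into a chain map). The union over $p$ then works because the lifts are compatible by construction.

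There is, however, a genuine gap in your ``only if'' direction, at the very first step. You take $F_0$ to be a direct sum of shifted copies of $A$ ``mapping onto $M$ as a graded module, one summand $A[n]$ per homogeneous element of $M$''. A morphism $A[n]\to M$ in $\mathcal{C}(A)$ must commute with the differentials, and since the generator of $A[n]$ is a cycle, it can only be sent to a \emph{cycle} of $M$. So the map you describe is not a morphism of dg modules unless every chosen homogeneous element is closed; and if you restrict the generators to cycles, the resulting $\pi\colon P\to M$ is a quasi-isomorphism but need not be surjective in each component, which is exactly the property you need in order to lift $\id_M$ against $\pi$ using the cofibrancy of $M$. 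The standard repair is to adjoin, for each homogeneous element $m$ of $M$, a copy of the contractible module $\mathrm{Cone}(\id_{A[n]})$ (rather than a bare $A[n]$): a dg-module map out of this cone is the datum of an arbitrary homogeneous element, so one gets componentwise surjectivity, the quasi-isomorphism property is unaffected because the cone is contractible, and conditions (F1)--(F3) survive because the cone carries a two-step filtration with graded-split inclusions and free subquotients. With this modification the rest of your construction (adjoining cells to kill superfluous cycles and to hit all homology classes, then extracting $M$ as a retract of $P$) goes through.
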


Let $P$ be a cofibrant dg $A$-module. Then the canonical map
\[\Hom_{\mathcal{H}(A)}(P,N)\rightarrow\Hom_{\mathcal{D}(A)}(P,N)\]
is bijective for all dg $A$-modules $N$. The canonical projection from
$\mathcal{H}(A)$ to $\mathcal{D}(A)$ admits a left adjoint functor
$\mathbf{p}$ which sends a dg $A$-module $M$ to a cofibrant dg
$A$-module $\mathbf{p}M$ quasi-isomorphic to $M$. We call $\mathbf{p}M$ the
\emph{cofibrant resolution} of $M$. In other words, we have
\[
\Hom_{\mathcal{D}(A)}(M,N)=\Hom_{\mathcal{H}(A)}(\mathbf{p}M,N)=H^0(\mathcal{H}om_A(\mathbf{p}M,N)).
\]
The \emph{perfect derived category} $\per(A)$ is the smallest full
subcategory of $\mathcal{D}(A)$ containing $A$ which is stable under
taking shifts, extensions and direct summands. An object $M$ of
$\mathcal{D}(A)$ belongs to $\per(A)$ if and only if it is {\em compact},
\ie the functor $\Hom_{\mathcal{D}(A)}(M,?)$ commutes with arbitrary
(set-indexed) direct sums, \confer section~5 of \cite{Keller94} 
and \cite{Neeman99}. The \emph{finite dimensional derived
  category} $\mathcal{D}_{fd}(A)$ is the full subcategory of
$\mathcal{D}(A)$ consisting of those dg $A$-modules whose homology is
of finite total dimension, or equivalently those dg $A$-modules $M$
such that the sum of the dimensions of the spaces
\[
\Hom_{\mathcal{D}(A)}(P,\Sigma^p M), \ko p\in \Z\ko
\]
is finite for any $P$ in $\per(A)$.

\subsection{Cofibrant resolutions of simples over a tensor algebra}
\label{ss:cofibrant-resolutions}
Let $Q$ be a finite graded quiver, and $\hat{kQ}$ the complete path
algebra. Thus, the $i$th component of the graded algebra $\hat{kQ}$
is the completion of the space of paths of total degree $i$ with
respect to the descending filtration by path length.
Let $\mathfrak{m}$ be the two-sided ideal of $\hat{kQ}$
generated by arrows of $Q$.  Let $A=(\hat{kQ},d)$ be a topological
dg algebra whose differential takes each arrow of $Q$ to an element
of $\mathfrak{m}$.

For a vertex $i$ of $Q$, let $P_i=e_iA$, and let $S_i$ be the simple
module corresponding to $i$. Then in $\mathcal{C}(A)$ we have a short
exact sequence
\[
\xymatrix{0\ar[r] & \ker(\pi)\ar[r] & P_i\ar[r]^{\pi} & S_i\ar[r] & 0,}
\]
where $\pi$ is the canonical projection from $P_i$ to $S_i$.
Explicitly, we have
\[
\ker(\pi)=\sum_{\rho\in Q_1:t(\rho)=i}\rho P_{s(\rho)}=\bigoplus_{\rho\in Q_1:t(\rho)=i}\rho P_{s(\rho)}
\]
with the induced differential. Notice that the direct sum
decomposition only holds in the category of graded $A$-modules and
that, if $d(\rho)$ does not vanish, the summand $\rho P_{s(\rho)}$ is
not stable under the differential of $\ker(\pi)$. The simple module
$S_i$ is quasi-isomorphic to
\[P=\mathrm{Cone}(\ker(\pi)\rightarrow P_i),\]
whose underlying graded space is
\[\bigoplus_{\rho\in Q_1:t(\rho)=i}\Sigma\rho P_{s(\rho)}\oplus P_i.\]
Let $n$ be the maximum of the degrees of the arrows ending in $i$. Let
\[F_{p}=0 \text{ for } p<-n,\qquad F_{-n}=P_i,\]
and for $p>-n$, let
\[F_p=\bigoplus_{\rho}\Sigma\rho P_{s(\rho)}\oplus P_i,
\]
where the sum is taken over the arrows $\rho$ of $Q$ ending in $i$
and of degree at least $1-p$. Then each $F_p$, $p\in\mathbb{Z}$, is
a dg $A$-submodule of $P$. It is easy to check that (F1) (F2) (F3)
hold. Therefore, $P$ is a cofibrant dg $A$-module, and hence it is a
cofibrant resolution of $S_i$. In particular, $S_i$ belongs to the
perfect derived category $\per(A)$.

\begin{lemma} Let $i$ and $j$ be two vertices of $Q$, and let $n$ be an integer.
Let $a_{ij}^n$ denote the number of arrows of $Q$ starting from $j$
and ending at $i$ and of degree $-n+1$. Then the dimension over $k$
of $\Hom_{\cd(A)}(S_i,\Sigma^n S_j)$ equals $a_{ij}^n+1$ if $i=j$ and
$n=0$, and equals $a_{ij}^n$ otherwise.
\end{lemma}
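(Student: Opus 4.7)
The plan is to compute $\Hom_{\cd(A)}(S_i,\Sigma^n S_j)$ as $H^0$ of the morphism complex $\cHom_A(P,\Sigma^n S_j)$, using the cofibrant resolution $P$ of $S_i$ constructed just above the lemma.

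First I would identify the degree-$0$ cochains. Using the graded direct sum decomposition $P = P_i \oplus \bigoplus_{\rho:\,t(\rho)=i}\Sigma\rho P_{s(\rho)}$, any degree-$0$ map $f:P\to\Sigma^n S_j$ splits accordingly. By right $A$-linearity, $f$ is determined by its values on the generators $e_i$ and $\Sigma\rho e_{s(\rho)}$, with $f(e_i)\in(\Sigma^n S_j)^0 e_i$ and $f(\Sigma\rho e_{s(\rho)})\in(\Sigma^n S_j)^{d_\rho-1}e_{s(\rho)}$, where $d_\rho$ denotes the degree of $\rho$. Since $S_j$ is one-dimensional, concentrated in cohomological degree $0$, and supported at the vertex $j$, the first factor contributes one dimension precisely when $i=j$ and $n=0$, and each arrow $\rho$ contributes one dimension precisely when $s(\rho)=j$ and $d_\rho=1-n$. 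Summing gives $\dim \cHom^0_A(P,\Sigma^n S_j) = a_{ij}^n + \delta_{ij}\delta_{n,0}$.

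Next I would verify that all such degree-$0$ cochains are cocycles and that no nontrivial coboundaries of degree $0$ arise. The differential on $P$ acts by $d_A$ on the $P_i$-part and, on the mapping-cone part, sends $\Sigma\rho e_{s(\rho)}$ to $\rho - \Sigma d(\rho)$. Decomposing $d(\rho) = \sum_\gamma \gamma c^\rho_\gamma$ canonically by leading arrow, the only $\gamma$-components that $f$ can see are those on which $f$ is already nonzero, and the corresponding coefficients $c^\rho_\gamma$ have strictly positive degree by a degree count, so they lie in the augmentation ideal $\m$, which annihilates $S_j$ on the right. A parallel analysis handles the degree-$(-1)$ cochains, showing that they produce no coboundary of degree $0$.

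The main obstacle is the cocycle verification: the cone differential couples different summands of $P$ through $d(\rho)$, so one must carefully track, for each basis cochain $f$ and each generator $\Sigma\sigma e_{s(\sigma)}$ of $P$, that the $\Sigma\rho P_{s(\rho)}$-component of $\Sigma d(\sigma)$ has coefficient in $\m$ and hence dies against $S_j$. Once this is settled, the dimension count from the first step yields the formula claimed.
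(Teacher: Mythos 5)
Your strategy coincides with the paper's: take the explicit cofibrant resolution $P$ of $S_i$ from section~\ref{ss:cofibrant-resolutions}, identify $\Hom_{\cd(A)}(S_i,\Sigma^n S_j)$ with $H^n\cHom_A(P,S_j)$, and count. Your count of the degree-$n$ cochains is correct and recovers exactly the paper's basis $\{\pi_\rho\}$ (plus $\pi_i$ when $i=j$ and $n=0$), and you have rightly isolated the one delicate point, namely that the cone differential couples the summands $\Sigma\rho P_{s(\rho)}$ through $d(\rho)$. The paper disposes of this in one sentence (``by the assumption on the differential of $A$, the differential of $\cHom_A(P,S_j)$ vanishes''), whereas you try to verify it; but your verification does not work. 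Writing $d(\rho)=\sum_\gamma \gamma c^\rho_\gamma$ with $c^\rho_\gamma\in e_{s(\gamma)}\hat{kQ}e_{s(\rho)}$, one has $|c^\rho_\gamma|=|\rho|+1-|\gamma|$, and for the arrows $\gamma$ on which $f$ is nonzero (those with $|\gamma|=1-n$) this equals $|\rho|+n$, which is \emph{zero} whenever $|\rho|=-n$; a degree-zero element of $\hat{kQ}$ may well have a lazy-path component and hence not lie in $\m$. What is actually needed is that $d(\rho)$ has no linear term, \ie $d(\rho)\in\m^2$, and the stated hypothesis $d(\rho)\in\m$ does not give this.

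The gap is not cosmetic: under the hypotheses as literally stated the conclusion can fail. Take $(Q,W)$ with two vertices, arrows $a\colon 1\to 2$, $b\colon 2\to 1$ and $W=ba$, and let $A=\hat{\Gamma}(Q,W)$. Then $d(a^*)=\del_a W=b$ is a single arrow, the Jacobian algebra is $k\times k$, so $\Hom_{\cd(A)}(S_1,\Sigma S_2)\cong\Ext^1_{k\times k}(S_1,S_2)=0$, while $a_{12}^1=1$ (the arrow $b\colon 2\to 1$ has degree $0=-1+1$); in the Hom complex this is visible as $d(\pi_b)=-\pi_{a^*}\neq 0$, so $\pi_b$ is not a cocycle and $\pi_{a^*}$ is a coboundary. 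The hypothesis under which both your argument and the paper's become a one-line observation is that $d$ sends every arrow into $\m^2$ (this is the condition imposed in section~\ref{ss:extension-to-dg-setting}, and it holds for $\hat{\Gamma}(Q,W)$ exactly when $(Q,W)$ is reduced). Then every $c^\rho_\gamma$ lies in $\m$, so $d_P$ maps $P$ into $P\m$, hence $f\circ d_P=0$ for every $A$-linear $f\colon P\to S_j$ and the differential of $\cHom_A(P,S_j)$ vanishes identically; no separate cocycle and coboundary bookkeeping is needed.
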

\begin{proof} First one notices that the morphism complex
$\cHom_A(P_i,S_j)$ is 1-dimensional and concentrated in degree~$0$
if $i=j$ and vanishes otherwise. Now let $P$ be the cofibrant
resolution of $S_i$ constructed as above. 
By the assumption on
the differential of $A$, the differential of $\cHom_A(P,S_j)$
vanishes. Therefore, we have
\[
\Hom_{\cd(A)}(S_i,\Sigma^n S_j)=\Hom_{\ch(A)}(P,\Sigma^n S_j)=H^n\cHom_A(P,S_j)=\cHom_A^n(P,S_j),
\]
which has a basis
\[\{\pi_{\rho}|\rho\in
Q_1, s(\rho)=j,t(\rho)=i,|\rho|=-n+1\}\cup\{\pi_i\}\] if $i=j$ and
$n=0$, and
 \[\{\pi_{\rho}|\rho\in
Q_1, s(\rho)=j,t(\rho)=i,|\rho|=-n+1\}\] otherwise,  where $\pi_i$
is the projection from $P_i$ to $S_i$, and for an arrow $\rho$ of
$Q$ starting from $j$, $\pi_{\rho}$ is the projection from
$\Sigma\rho P_{j}\cong\Sigma^n P_j$ to $S_j$. The assertion follows immediately.
\end{proof}

\subsection{The perfect derived category is Krull-Schmidt}
\label{ss:perfect-Krull-Schmidt} As in
section~\ref{ss:cofibrant-resolutions}, let $Q$ be a finite graded
quiver, $\hat{kQ}$ the complete path algebra, $\mathfrak{m}$ the
two-sided ideal of $\hat{kQ}$ generated by the arrows of $Q$ and
$A=(\hat{kQ}, d)$ a topological dg algebra whose differential takes
each arrow of $Q$ to an element of $\mathfrak{m}$. 

\begin{lemma} The perfect derived category $\per(A)$ is
  Krull--Schmidt, \ie any object in $\per(\Gamma)$ is isomorphic to a
  finite direct sum of objects whose endomorphism rings are local.
\end{lemma}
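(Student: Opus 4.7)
The plan is to invoke the classical criterion that an idempotent-complete additive category is Krull--Schmidt if and only if every endomorphism ring is semiperfect. Idempotent-completeness is built into the definition of $\per(A)$, so the task reduces to showing that $\End_{\per(A)}(M)$ is semiperfect for each $M \in \per(A)$.

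I would first handle the base case $M = A$, where $\End_{\per(A)}(A) = H^0(A)$. Since $d$ sends each arrow of $Q$ into $\mathfrak{m}$, it preserves the $\mathfrak{m}$-adic filtration of $A^0$ and descends to a filtration on $H^0(A)$. Passing to $H^0$, one obtains a surjection $H^0(A) \twoheadrightarrow \prod_{i \in Q_0} k$ whose kernel is the image of $Z^0(A) \cap \mathfrak{m}$, and hence topologically nilpotent. Continuity of $d$ together with completeness of $\widehat{kQ}$ makes $H^0(A)$ complete in this filtration, so it is a complete semilocal ring and therefore semiperfect. In particular, $A$ decomposes into the indecomposables $e_i A$ with local endomorphism rings.

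For a general $M \in \per(A)$, I would choose a cofibrant resolution $\mathbf{p} M$ admitting a \emph{finite} filtration $0 = F_0 \subset F_1 \subset \cdots \subset F_n = \mathbf{p} M$ whose subquotients are finite direct sums of shifts of $A$; such a resolution exists because $M$ is compact and hence retracts off a finite iterated extension of shifts of $A$. The triangular filtration $F_\bullet$ induces a decreasing filtration on the morphism complex $\mathcal{H}om_A(\mathbf{p} M, \mathbf{p} M)$. Combining this with the entrywise $\mathfrak{m}$-adic filtration and then passing to $H^0$ yields a decreasing filtration on $\End_{\per(A)}(M)$ whose bottom quotient is a finite-dimensional algebra (a subquotient of a matrix algebra over $\prod_{i \in Q_0} k$), and which is complete in its induced topology by an application of the Mittag--Leffler lemma~\ref{lemma:Mittag-Leffler}. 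Hence $\End_{\per(A)}(M)$ is again a complete semilocal ring, and so semiperfect.

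The main obstacle will be the last step: controlling the behaviour of cycles and coboundaries relative to the combined two-step filtration. The finiteness of $F_\bullet$ and of $Q_0$ are both needed to guarantee that the bottom quotient is finite-dimensional; continuity of $d$ is needed to ensure that $Z^0$ and $B^0$ are closed in $\mathcal{H}om_A(\mathbf{p} M, \mathbf{p} M)$, so that the quotient topology on $\End_{\per(A)}(M)$ agrees with the induced filtration topology. Once these points are in place, the idempotent-lifting argument is formal and Krull--Schmidt follows.
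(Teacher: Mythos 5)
Your route is correct, but it is genuinely different from the one in the paper. You reduce everything to the classical criterion ``idempotent complete $+$ semiperfect endomorphism rings $\Rightarrow$ Krull--Schmidt'' and then verify semiperfectness of $\End_{\per(A)}(M)$ by topological-algebra arguments (complete semilocal rings). The paper instead never mentions semiperfectness: it shows directly that the category of \emph{strictly perfect} dg modules is Krull--Schmidt by constructing, for each endomorphism $f$ of a finitely generated projective graded module, a ``Fitting--Jordan decomposition'' $P=K_\infty\oplus I_\infty$ as an inverse limit of the Fitting decompositions of the induced maps on the finite-dimensional quotients $P/P\m^i$, observing that this decomposition is automatically by dg submodules, and then passing to $\per(A)$ as the quotient of that category by the ideal of morphisms factoring through contractibles. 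Your approach buys brevity and a cleaner conceptual structure at the price of invoking the semiperfectness criterion; the paper's buys an explicit description of the decomposition (and of the radical), which it reuses implicitly elsewhere. One repair to your last step: the closedness of $B^0$ in $\cHom_A^0(\mathbf{p}M,\mathbf{p}M)$ does \emph{not} follow from continuity of $d$ alone --- it follows from linear compactness (pseudocompactness) of the graded components of the morphism complex, since continuous images of linearly compact spaces are closed. In fact you can sidestep this issue entirely: the ring $E=Z^0\cHom_A(P,P)$ of genuine dg endomorphisms of a strictly perfect $P$ is a closed, hence complete, subring of the linearly compact algebra $\cHom_A^0(P,P)$ with $E\cap\mathfrak{n}$ topologically nilpotent and $E/(E\cap\mathfrak{n})$ finite-dimensional, so $E$ is semiperfect; then $\End_{\per(A)}(P)=E/B^0$ is a quotient of a semiperfect ring, and $\End_{\per(A)}(M)$ for a direct summand $M$ of $P$ is a corner ring thereof --- both operations preserve semiperfectness, so no control of $B^0$ is needed.
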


\begin{proof} Clearly the category of finitely generated projective
  graded $A$-modules is Krull-Schmidt and its radical is the ideal
  generated by the morphisms $A[i] \to A[i]$, $i\in \Z$, given by the
  left multiplication by an element of the ideal $\m$. Moreover, in
  this category, each endomorphism $f$ of an object $P$ admits a
  `Fitting-Jordan decomposition', which can be constructed as follows:
  Let $i>0$. Then the morphism $f_i$ induced by $f$ in $P/P\m^i$
  admits a Fitting-Jordan decomposition into the direct sum of
  $I_i=\Im f_i^{m_i}$ and $K_i=\Ker f_i^{m_i}$ for some integer $m_i$.
  Clearly, by induction, we can even construct an increasing sequence
  of integers $m_i$, $i>0$, with this property.  Then we have natural
  projection morphisms $K_{i+1} \to K_i$ and $I_{i+1} \to I_i$ and
  clearly $P$ decomposes into the direct sum of the inverse limits
  $K_\infty$ and $I_\infty$ and $f$ acts nilpotently in each
  finite-dimensional quotient of $K_\infty$ and invertibly in
  $I_\infty$. Thus, the morphism induced by $f$ in $K_\infty$ belongs
  to the radical and the morphism induced in $I_\infty$ is invertible.
  Now we consider the category of strictly perfect dg $A$-modules, \ie
  the full subcategory of the category of dg $A$-modules obtained as
  the closure of $A$ under shifts, direct factors and extensions which
  split in the category of graded $A$-modules (equivalently: cones).
  If $P$ belongs to this category, its underlying graded $A$-module is
  finitely generated projective.  It follows that $P$ decomposes into
  a finite sum of indecomposable dg $A$-modules. Now assume that $P$
  is indecomposable as a dg $A$-module and that $f$ an endomorphism of
  $P$ in the category of dg $A$-modules. If we apply the
  Fitting-Jordan decomposition to the morphism of graded modules
  underlying $f$, we find a decomposition of $P$ as the direct sum of
  $K_\infty$ and $I_\infty$ and the construction shows that these are
  dg submodules of $P$, that $f$ acts by a radical morphism in
  $K_\infty$ and by an invertible morphism in $I_\infty$. Since $P$ is
  indecomposable, $f$ is radical or invertible. We conclude that the
  category of strictly perfect dg $A$-modules is a Krull-Schmidt
  category. It follows that the canonical functor from the category of strictly
  perfect dg $A$-modules to the perfect derived category is essentially 
  surjective and that the perfect derived category is a
  Krull-Schmidt category since it is the quotient of the category of
  strictly perfect dg $A$-modules by the ideal of morphisms factoring
  through contractible strictly perfect dg $A$-modules.
\end{proof}

\subsection{On finite-dimensional dg modules}
\label{ss:finite-dimensional-dg-modules} As in
section~\ref{ss:cofibrant-resolutions}, let $Q$ be a finite graded
quiver, $\hat{kQ}$ the complete path algebra, $\mathfrak{m}$ the
two-sided ideal of $\hat{kQ}$ generated by the arrows of $Q$ and
$A=(\hat{kQ}, d)$ a topological dg algebra whose differential takes
each arrow of $Q$ to an element of $\mathfrak{m}$. Let $\ch_{fd}(A)$
be the full subcategory of $\ch(A)$ formed by the dg modules of finite
total dimension and let $\ac_{fd}(A)$ be its full subcategory formed
by the acyclic dg modules of finite total dimension. Let $\cd_0(A)$ be
the localizing subcategory of $\cd(A)$ generated by $\cd_{fd}(A)$.
Thus, the subcategory $\cd_0(A)$ is the closure of $\cd_{fd}(A)$ under
arbitrary coproducts and finite extensions.

\begin{theorem} \label{thm:findim-compact}
Assume that all arrows of $Q$ are of degree $\leq 0$.
\begin{itemize}
\item[a)] The objects of $\cd_{fd}(A)$ are compact in
$\cd(A)$ and the canonical functor
\[
\ch_{fd}(A)/\ac_{fd}(A) \to \cd_{fd}(A)
\]
is an equivalence.
\item[b)] For each perfect dg module $P$ and each dg module $M$
belonging to $\cd_0(A)$, the canonical map
\[
\colim (\cd A)(P\lten_{A} A/\m^n, M) \to (\cd A)(P, M)
\]
is bijective.
\end{itemize}
\end{theorem}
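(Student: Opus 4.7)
The plan is to prove (a) by identifying $\cd_{fd}(A)$ with the thick subcategory of $\cd(A)$ generated by the simple modules $S_v$, $v \in Q_0$; both sub-claims of (a) then follow. The first step is a graded Nakayama lemma: since all arrows of $Q$ lie in non-positive degrees, the algebra $A$ is concentrated in degrees $\leq 0$, and the hypothesis $d(Q_1) \subseteq \m$ forces $d$ to preserve each $M\m^i$ for any dg module $M$. For $M$ finite-dimensional, the descending chain $M \supseteq M\m \supseteq M\m^2 \supseteq \cdots$ stabilizes at some step $N$; examining the component of minimal degree of $M\m^N$ reduces to ordinary Nakayama for the complete algebra $A^0$ with its radical $\m \cap A^0$, forcing $M\m^N = 0$. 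Each subquotient $M\m^i/M\m^{i+1}$ is then a finite-dimensional complex over $A/\m \cong k^{Q_0}$ and decomposes in $\cd(A)$ as a finite direct sum of shifts of simples. Combined with the perfectness of each $S_v$ established in Section~\ref{ss:cofibrant-resolutions}, this places every finite-dimensional dg $A$-module in the thick closure of the simples, hence in $\per(A)$.

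For a general object $M$ of $\cd_{fd}(A)$, the assumption $A^{>0} = 0$ provides the standard $t$-structure on $\cd(A)$ with heart equivalent to $\Mod H^0 A$. Since $H^* M$ is bounded and finite-dimensional, truncation exhibits $M$ as a finite iterated extension of its shifted cohomologies $H^i M [-i]$. Each $H^i M$ is a finite-dimensional $H^0 A$-module, and since $H^0 A$ is a quotient of the complete algebra $A^0$, another application of Nakayama gives a finite composition series with simple factors identifiable with the $S_v$; hence $M$ lies in the thick closure of the simples. This proves compactness and shows every $M \in \cd_{fd}(A)$ is quasi-isomorphic to a finite-dimensional dg module, yielding essential surjectivity of $\ch_{fd}(A)/\ac_{fd}(A) \to \cd_{fd}(A)$. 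Full-faithfulness is a standard Verdier-localization argument: given $M, N \in \ch_{fd}(A)$ and a roof $M \xleftarrow{s} L \xrightarrow{g} N$ in $\ch(A)$ representing a morphism in $\cd(A)$, essential surjectivity applied to $L$ produces a finite-dimensional $L' \in \ch_{fd}(A)$ quasi-isomorphic to $L$, allowing the roof to be refined inside $\ch_{fd}(A)$.

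For (b), I first observe that $A/\m^n$ is finite-dimensional for each $n$ (since $Q$ is finite and $\m^n$ has finite codimension in $\hat{kQ}$), hence compact in $\cd(A)$ by part (a); consequently $P \lten_A A/\m^n$ is compact whenever $P$ is perfect. The key case is $M \in \cd_{fd}(A)$: by part (a), $M$ is quasi-isomorphic to a dg module $M'$ with $M' \cdot \m^n = 0$ for some $n$. Choosing a strictly perfect resolution $\bp P$ of $P$, any $A$-linear chain map $\bp P \to M'$ automatically kills $\bp P \cdot \m^n$ and therefore factors through $\bp P / \bp P \m^n = \bp P \otimes_A A/\m^n$, which represents $P \lten_A A/\m^n$; passing to homotopy classes identifies $\Hom_{\cd(A)}(P, M)$ with $\Hom_{\cd(A)}(P \lten_A A/\m^n, M)$ and shows the comparison map is bijective at this stage $n$, so the colimit stabilizes.

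To extend from $\cd_{fd}(A)$ to $\cd_0(A)$, compactness of $P$ factors any $f: P \to M$ through a finite sub-coproduct of a presentation $M = \bigoplus_j M_j$ with $M_j \in \cd_{fd}(A)$, which is finite-dimensional and reduces to the previous case; injectivity of the colimit map is analogous using compactness of $P \lten_A A/\m^n$, and closure under finite extensions follows from the five lemma applied to the comparison map. I anticipate the main technical obstacle to be the devissage step in (a) that places every object of $\cd_{fd}(A)$ in the thick closure of the simples: the non-positive-degree hypothesis on the arrows of $Q$ is essential both for the graded Nakayama argument on finite-dimensional dg modules and for the existence of the $t$-structure that organizes arbitrary objects of $\cd_{fd}(A)$ into iterated extensions of objects in the heart.
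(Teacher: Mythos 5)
Your outline for part a) starts along the same lines as the paper (devissage of both $\cd_{fd}(A)$ and $\ch_{fd}(A)/\ac_{fd}(A)$ to the simples, compactness via the perfectness of the $S_i$), but it breaks down at the full-faithfulness of $\ch_{fd}(A)/\ac_{fd}(A)\to\cd_{fd}(A)$, which is the actual content of the theorem. Your ``standard Verdier-localization argument'' is circular: given a roof $M\xleftarrow{s}L\xrightarrow{g}N$ with $L$ infinite-dimensional, knowing that $L$ is \emph{isomorphic in $\cd(A)$} to some finite-dimensional $L'$ does not let you refine the roof — for that you need an honest quasi-isomorphism of dg modules $L'\to L$ with $L'$ finite-dimensional, and extracting such a map from an isomorphism in $\cd(A)$ is precisely an instance of the full-faithfulness being proved. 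What is needed is a cofinality statement, and the paper supplies it by an explicit ``Artin--Rees'' computation: for the cofibrant resolution $C=\mathrm{Cone}(R\to P_i)$ of $S_i$ one checks $P_i\m^{n+1}\cap R=R\m^n$, so the finite-dimensional quotient $\mathrm{Cone}(R/R\m^n\to P_i/P_i\m^{n+1})$ is still quasi-isomorphic to $S_i$ and every chain map $C\to M$ with $M\m^n=0$ factors through it. Nothing in your graded Nakayama argument substitutes for this. The same issue undermines your essential-surjectivity claim: your truncation devissage produces iterated extensions \emph{in $\cd(A)$} of finite-dimensional layers, and reassembling these into an honest finite-dimensional dg module again requires representing morphisms of $\cd(A)$ by roofs with finite-dimensional middle. (The paper sidesteps this by proving full-faithfulness first and deducing surjectivity of the functor from generation by the simples.)

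In part b), your surjectivity argument (an honest chain map out of a cofibrant $P$ into a module killed by $\m^n$ factors through $P/P\m^n$) is correct and matches the paper, as does the reduction from $\cd_0(A)$ to $\cd_{fd}(A)$ — modulo the imprecision that objects of $\cd_0(A)$ are not coproducts of objects of $\cd_{fd}(A)$ but are built from them by coproducts and extensions. The gap is injectivity. You identify $(\cd A)(P\lten_A A/\m^n,M)$ with homotopy classes of chain maps $\bp P/\bp P\m^n\to M'$, but $\bp P/\bp P\m^n$ is not cofibrant, so this identification is unjustified; and the resulting claim that the comparison map is bijective at the single stage $n$ is strictly stronger than the theorem and should not be expected to hold — a morphism $P/P\m^n\to M$ can die after pullback to $P$ without vanishing, becoming zero only after further projection to $P/P\m^N$ for $N\gg n$, which is exactly why the statement concerns the colimit. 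The paper's injectivity proof (representing the morphism by a fraction through a finite-dimensional $M'$, forming the shifted cone $E$, and factoring $P\to E$ through $P/P\m^N$) is the missing content here.
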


\begin{proof} a) Let us first show that both categories, $\cd_{fd}(A)$ and
$\hfd{A}$ are generated by the simple modules $S_i$. Indeed, if $M$
is a dg module, then, since $A$ is concentrated in negative degrees,
we have an exact sequence of dg modules
\[
0 \to \tau_{\leq 0} M \to M \to \tau_{>0}(M) \to 0.
\]
If $M$ belongs to $\hfd{A}$ respectively $\cd_{fd}(A)$, this
sequence yields a triangle in the respective triangulated category.
Thus, it is enough to check that an object $M$ whose only non zero
component is in degree~$0$ belongs to the triangulated category
generated by the $S_i$. This is easy by induction on the length of
the zeroth component as a module over the semilocal algebra
$H^0(A)$. Since the $S_i$ are compact, it follows that $\cd_{fd}(A)$
consists of compact objects. Moreover, in order to show the
equivalence claimed in a), it is enough to show that the canonical
functor is fully faithful.  For this, it suffices to show that the
canonical map
\[
(\hfd{A})(S_i, M) \to (\cd_{fd} A)(S_i, M)
\]
is bijective for each vertex $i$ of $Q$ and for each dg module $M$
of finite total dimension. Let us first show that it is surjective.
Let $P_i \to S_i$ be the canonical projection and $R$ its kernel.
Let $j$ be the inclusion of $R$ in $P_i$. By
section~\ref{ss:cofibrant-resolutions}, the cone $C$ over $j$ is a
cofibrant resolution of $S_i$. Since $M$ is of finite total
dimension, it is annihilated by some power $\m^n$ of the ideal $\m$.
Let $f: S_i \to M$ be a morphism of $\cd_{fd}(A)$. It is represented
by a morphism of dg modules $g: C \to M$. Since $C$ is the cone over
$j$, the datum of $g$ is equivalent to that of a morphism of dg
modules $g_0: P_i \to M$ and a morphism of graded modules $g_1: R
\to M$ homogeneous of degree $-1$ and such that $g_0 \circ j =
d(g_1)$. Since $M$ is annihilated by $\m^n$, the morphism $g_0$
factors canonically through the projection $P_i \to P_i/ P_i
\m^{n+1}$ and the morphism $g_1$ through the projection $R \to
R/R\m^n$. We choose these exponents because we have
\[
P_i \m^{n+1} \cap R = R\m^n.
\]
Indeed, the graded module $P_i \m^{n+1}$ has a topological
basis consisting of the paths of length $\geq n+1$ ending
in $i$, the graded module $R$ has a topological basis
consisting of the paths of length $\geq 1$ ending
in $i$ and so $R\m^n$ also has as topological basis
the set of paths of length $\geq n+1$ ending in $i$.
It follows that we have an exact sequence of dg modules
\[
0 \to R/R\m^n \to P_i/P_i\m^{n+1} \to S_i \to 0
\]
and the cone $C'$ over the map
\[
R/R\m^n \to P_i/P_i\m^{n+1}
\]
is still quasi-isomorphic to $S_i$. By construction, the map $g: C
\to M$ factors through the canonical projection $C \to C'$ and $C'$
is of finite total dimension. So the given morphism $f$ is
represented by the fraction
\[
\xymatrix{S_i &  C' \ar[l] \ar[r] &   M}
\]
and hence is the image of a morphism of $\hfd{A}$. To
show injectivity of the canonical map, we first
observe that the functor
\[
\hfd{A} \to \cd_{fd}(A)
\]
detects isomorphisms. Indeed, if a morphism $f: L \to M$
becomes invertible after applying this functor,
then its cone is acyclic and so $f$ was already
invertible. Now we show that the map
\[
(\hfd{A})(S_i, M) \to (\cd_{fd} A)(S_i, M)
\]
is injective. Indeed, if $f$ is a morphism in its
kernel, then in the triangle
\[
\xymatrix{\Sigma^{-1} Y \ar[r]^\eps & S_i \ar[r]^f & M \ar[r] & Y} \ko
\]
the morphism $\eps$ admits a section $s$ in $\cd_{fd}(A)$. This
section lifts to a morphism $\tilde{s}$ by the surjectivity we have
already shown. Then $\eps \tilde{s}$ is invertible since it becomes
invertible in $\cd_{fd}(A)$ and so $\eps$ is a retraction and $f$
vanishes.

b) Both sides are homological functors in $M$ which commute
with arbitrary coproducts. So it is enough to show the
claim for $M$ in $\cd_{fd}(A)$. By a), we may assume that
$M$ belongs to $\ch_{fd}(A)$. We may also assume that $P$
is strictly perfect. Let us show that the map is surjective.
A morphism from $P$ to $M$ in $\cd(A)$ is represented by
a morphism of dg modules $f: P\to M$. Since $M$ is of finite
total dimension, it is annihilated by some power $\m^n$
and so $f$ factors through the projection $P \to P/P\m^n$,
which shows that $f$ is in the image of the map.
Let us now show injectivity. Let $f: P/P\m^n \to M$ be
a morphism of $\cd A$ such that the composition
\[
P \to P/P\m^n \to M
\]
vanishes in $\cd A$. By the calculus of left fractions,
there is a quasi-isomorphism $M \to M'$ such that
the morphism $f$ is represented by a fraction
\[
\xymatrix{ P/P\m^n \ar[r]^-{f'} & M' & M \ar[l] } \ko
\]
where $f'$ is a morphism in $\ch_{fd}(A)$. By forming the shifted
cone over $f'$ we obtain an exact sequence of dg modules
\[
 0 \to \Sigma^{-1} M' \to E \to P/P\m^n \to 0
\]
which splits as a sequence of graded $A$-modules and whose terms all
have finite total dimension. Since the composition $P \to P/P\m^n
\to M'$ vanishes in $\cd A$, the canonical surjection $P \to
P/P\m^n$ factors through $E$ in the category of dg modules:
\[
P \to E \to P/P\m^n.
\]
Since $E$ is of finite total dimension, the map $P\to E$ factors
through the canonical projection $P \to P/P\m^N$ for some $N\gg 0$.
The composition
\[
P/P\m^N \to E \to P/P\m^n
\]
is the canonical projection since its composition with $P \to
P/P\m^N$ is the canonical projection $P\to P/P\m^n$ and $P\to
P/P\m^n$ is surjective. The composition
\[
P/P\m^N \to E \to P/P\m^n \to M'
\]
vanishes in $\cd_{fd}(A)$ since it contains two
successive morphisms of a triangle. Thus the
composition of the given map $f: P/P\m^n \to M$
with the projection $P/P\m^N \to P/P\m^n$
vanishes in $\cd_{fd}(A)$, which implies that
the given map vanishes in the passage to
the colimit.
\end{proof}

\subsection{The Nakayama functor} \label{ss:Nakayama-functor}
We keep the notations and assumptions of
section~\ref{ss:finite-dimensional-dg-modules}.
We will construct a fully faithful
functor from $\per(A)$ to $\cd_0(A)$. Let $P$ be in $\per(A)$. Then
the functor
\[
D\Hom_{\cd(A)}(P,?) : \cd_0(A)\op \to \Mod k
\]
is cohomological and takes coproducts of $\cd_0(A)$ to products
because $P$ is compact. Now the triangulated category $\cd_0(A)$ is
compactly generated by $\cd_{fd}(A)$. So by the Brown
representability theorem, we have an object $\nu P$ in $\cd_0(A)$
and a canonical isomorphism
\[
D\Hom_{\cd(A)}(P, L) = \Hom_{\cd(A)}(L, \nu P)
\]
functorial in the object $L$ of $\cd_0(A)$. As in the case of the
Serre functor (\confer \cite{BondalKapranov89},
\cite{VandenBergh08}), one shows that the assignment $P \mapsto \nu
P$ underlies a canonical triangle functor from $\per(A)$ to
$\cd_0(A)$. We call it the {\em Nakayama functor}. It takes a
strictly perfect object $P$ to the colimit $\colim D(P/P\m^n)$.

\begin{proposition}
\begin{itemize}
\item[a)] The Nakayama functor $\nu: \per(A) \to \cd_0(A)$
is fully faithful.
Its image is formed by the objects $M$ such that for each object
$F$ in $\cd_{fd}(A)$, the sum
\[
\sum_{i\in\Z} \dim \Hom(F,\Sigma^i M)
\]
is finite.
\item[b)] The functor
\[
\Phi: \per(A) \to \Mod(\cd_{fd}(A)\op)
\]
taking an object $P$ to the restriction of $\Hom(P,?)$ to $\cd_{fd}(A)$
is fully faithful. 
\end{itemize}
\end{proposition}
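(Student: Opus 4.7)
The plan is to prove (a) by a direct calculation of $\Hom_{\cd(A)}(\nu P,\nu Q)$ for $P,Q \in \per(A)$ that reduces it to $\Hom(P,Q)$, the critical identity being
\[
\lim_n \Hom(P,Q/Q\m^n) = \Hom(P,Q).
\]
Starting from $\Hom(\nu P,\nu Q)$, I first use $\nu P \in \cd_0(A)$ together with the defining property of $\nu Q$ to rewrite it as $D\Hom(Q,\nu P)$; Theorem~\ref{thm:findim-compact}(b) applied to the perfect module $Q$ and the object $\nu P \in \cd_0(A)$ then turns $\Hom(Q,\nu P)$ into $\colim_n \Hom(Q/Q\m^n,\nu P)$. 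Since each $Q/Q\m^n$ lies in $\cd_{fd}(A) \subset \cd_0(A)$, the defining property of $\nu P$ identifies the $n$-th term with $D\Hom(P,Q/Q\m^n)$. Because $P$ is compact in $\cd(A)$ and $Q/Q\m^n$ has finite total dimension, $\Hom(P,Q/Q\m^n)$ is finite-dimensional, so double-dualization is the identity and $D$ turns the filtered colimit into a limit; what remains is $\lim_n \Hom(P,Q/Q\m^n)$.

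The main obstacle is then the inverse-limit identity above. I would reduce to strictly perfect $P$ and $Q$ (both sides respect direct summands) and induct along the cofibrant filtration of $P$. The base case $P=A[i]$ becomes $H^i(Q) = \lim_n H^i(Q/Q\m^n)$, which holds because for strictly perfect $Q$ the module $Q$ is literally $\lim_n Q/Q\m^n$ as a complex of graded $A$-modules (each $e_i A$ being $\m$-adically complete by completeness of $A$) and because each system $\{H^{j}(Q/Q\m^n)\}_n$ consists of finite-dimensional spaces, hence is automatically Mittag--Leffler, so the $\lim^1$ term in the Milnor exact sequence vanishes. The inductive step is a five-lemma argument on the triangle extending $F_{p-1}$ to $F_p$ in the cofibrant filtration of $P$, using the same finite-dimensional Mittag--Leffler property to guarantee exactness of $\lim$. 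For the image description in (a), the condition is automatic for $M=\nu P$ by the identification $\Hom(F,\Sigma^i \nu P) = D\Hom(P, \Sigma^{-i} F)$ together with the property defining $\cd_{fd}(A)$; conversely, the finiteness condition on $M$ makes $F \mapsto D\Hom(F,M)$ into a homological functor on $\cd_{fd}(A)$ with finite-dimensional values, and one builds a representing $P \in \per(A)$ iteratively from the simples using their cofibrant resolutions from section~\ref{ss:cofibrant-resolutions}.

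For (b) the same inverse-limit identity does the work. A natural transformation $\alpha$ between the restrictions to $\cd_{fd}(A)$ of $\Hom(Q,?)$ and $\Hom(P,?)$ is determined by its values at $F = Q/Q\m^n$: the image of the canonical projection $\pi_n\colon Q \to Q/Q\m^n$ under $\alpha_{Q/Q\m^n}$ yields $\phi_n \in \Hom(P,Q/Q\m^n)$, and naturality of $\alpha$ along the surjections $Q/Q\m^{n+1} \to Q/Q\m^n$ makes $(\phi_n)$ coherent, so by the limit identity it comes from a unique $\phi\colon P \to Q$. A factorization argument -- any morphism $\psi\colon Q \to F$ with $F \in \cd_{fd}(A)$ factors through some $Q/Q\m^n$ because $F$ is annihilated by a power of $\m$ -- together with naturality of $\alpha$ shows $\alpha_F(\psi) = \psi\circ\phi$, giving surjectivity. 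Injectivity is immediate by evaluating at the $\pi_n$ themselves. The principal obstacle throughout is thus the single identity $\Hom(P,Q) = \lim_n \Hom(P,Q/Q\m^n)$; the rest is formal bookkeeping of adjunction and naturality.
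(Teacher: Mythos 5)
Your argument for the full faithfulness of $\nu$ in part a) follows the same chain of identifications as the paper's proof: $\Hom(\nu P,\nu Q)=D\Hom(Q,\nu P)=D\colim D\Hom(P,Q\lten_A A/\m^n)=\lim\Hom(P,Q\lten_A A/\m^n)$, using Theorem~\ref{thm:findim-compact}~b) and the defining property of $\nu$. The one place where you go beyond the paper is the final identity $\lim_n\Hom(P,Q/Q\m^n)=\Hom(P,Q)$, which the paper simply asserts; your justification (for strictly perfect $P$ and $Q$ the complex $\cHom_A(P,Q)$ is the inverse limit of the surjective tower of complexes $\cHom_A(P,Q/Q\m^n)$, and the $\lim^1$ term in the Milnor sequence vanishes by Mittag--Leffler since the relevant homology spaces are finite-dimensional) is correct and is a genuine gain in completeness. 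For part b) you take a different route: the paper observes that $D\circ\Phi$ is isomorphic to the composition of the Yoneda embedding with $\nu$ and deduces full faithfulness of $\Phi$ from that of $\nu$ together with the full faithfulness of the duality $D$ on modules with finite-dimensional values, whereas you verify full faithfulness directly, by evaluating a natural transformation on the tower $Q/Q\m^n$, reassembling a morphism $P\to Q$ via the same limit identity, and using that every morphism from $Q$ to a finite-dimensional dg module factors through some $Q/Q\m^n$. Both arguments are correct; the paper's is shorter and makes the relation between $\Phi$ and $\nu$ explicit, while yours is self-contained and more elementary.

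One caveat: the description of the essential image of $\nu$ is part of the statement, and your converse direction (every $M$ satisfying the finiteness condition is of the form $\nu P$) is only a sketch --- ``one builds a representing $P$ iteratively from the simples'' is precisely the step that needs an argument, namely corepresenting the homological functor $F\mapsto D\Hom(F,M)$ on $\cd_{fd}(A)$ by a perfect object, which requires a d\'evissage and a termination argument based on the finiteness hypothesis. To be fair, the proof printed in the paper does not address the image description at all, so you are no worse off on this point, but it should not be presented as routine.
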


\begin{proof} a) If $P$ and $Q$ are perfect objects, we have
\[
\Hom(\nu P , \nu Q) = D\Hom(Q, \nu P) = D \colim \Hom(Q\lten_A
A/\m^n , \nu P) \ko
\]
where we have used part b) of Theorem~\ref{thm:findim-compact}. Now
by definition of $\nu P$, we have
\[
D \colim \Hom(Q\lten_A A/\m^n , \nu P)  = D \colim D\Hom(P, Q
\lten_A A/\m^n)
\]
and the last space identifies with
\[
\lim DD \Hom(P,  Q \lten_A A/\m^n) = \lim \Hom(P, Q\lten_A A/\m^n) =
\Hom(P,Q).
\]
Thus, the functor $\nu$ is fully faithful. 

b) The given functor takes its values in the full subcategory
$\cu$ of left modules on $\cd_{fd}(A)$ which take values in
the category of finite-dimensional vector spaces. The duality
functor $D : \cu \to \Mod(\cd_{fd}(A))$ is fully faithful
and the composition $D\circ \Phi$ is isomorphic to the
composition of the Yoneda embedding with $\nu$. By part a),
the composition $D\circ \Phi$ and therefore $\Phi$ are
fully faithful.
\end{proof}

\section{The derived equivalence}\label{S:the-equivalence}

\subsection{The main theorem} \label{ss:main-theorem}
Let $(Q,W)$ be a quiver with potential and $i$ a fixed vertex of $Q$. We assume (c1) (c2) and (c3) as in Section~\ref{S:mutation}.
Write $\tilde{\mu}_i(Q,W)=(Q',W')$.
Let $\Gamma=\hat\Gamma(Q,W)$ and $\Gamma'=\hat{\Gamma}(Q',W')$ be the complete Ginzburg dg algebras associated
to $(Q,W)$ and $(Q',W')$ respectively, \confer Section~\ref{ss:ginzburgalg}.

For a vertex $j$ of $Q$, let $P_j=e_j\Gamma$ and $P_j'=e_j\Gamma'$.

\begin{theorem}\label{T:mainthm}
\begin{itemize}
 \item[a)]
 There is a triangle equivalence
\[F:\mathcal{D}(\Gamma')\longrightarrow\mathcal{D}(\Gamma),\]
which sends the $P'_j$ to $P_j$ for $j\neq i$ and to the cone $T_i$
over the morphism
\[P_i\longrightarrow \bigoplus_{\alpha}P_{t(\alpha)}\]
for $i=j$, where we have a summand $P_{t(\alpha)}$ for each arrow
$\alpha$ of $Q$ with source $i$ and the corresponding component of
the map is the left multiplication by $\alpha$. The functor $F$
restricts to triangle equivalences from $\per(\Gamma')$ to
$\per(\Gamma)$ and from $\mathcal{D}_{fd}(\Gamma')$ to
$\mathcal{D}_{fd}(\Gamma)$.

\item[b)] Let $\Gamma_{\mathrm{red}}$ respectively $\Gamma'_{\mathrm{red}}$ be
the complete Ginzburg dg algebra associated with the reduction of
$(Q,W)$ respectively the reduction $\mu_i(Q,W)$ of
$\tilde{\mu}_i(Q,W)$. The functor $F$ yields a triangle equivalence
\[F_{\mathrm{red}}:\mathcal{D}(\Gamma'_{\mathrm{red}})\longrightarrow\mathcal{D}(\Gamma_{\mathrm{red}}),\]
which restricts to triangle equivalences from
$\per(\Gamma'_{\mathrm{red}})$ to $\per(\Gamma_{\mathrm{red}})$ and
from $\mathcal{D}_{fd}(\Gamma'_{\mathrm{red}})$ to
$\mathcal{D}_{fd}(\Gamma_{\mathrm{red}})$.
\end{itemize}
\end{theorem}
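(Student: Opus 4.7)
\medskip
\noindent\textbf{Proof proposal.} The plan is to exhibit a compact generator $T = \bigoplus_{j\in Q_0} T_j$ of $\per(\Gamma)$ whose (derived) endomorphism dg algebra is quasi-isomorphic to $\Gamma'$, and then invoke the standard tilting machinery for dg algebras to produce $F$. Concretely, I set $T_j=P_j$ for $j\neq i$ and let $T_i$ be the cone over $P_i\to\bigoplus_{\alpha: s(\alpha)=i}P_{t(\alpha)}$ as in the statement. The equivalence will then be given by $F=?\lten_{\Gamma'}T$, with quasi-inverse $\RHom_\Gamma(T,?)$, once both conditions below are verified.

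First I would prove that $T$ generates $\per(\Gamma)$. Since $T_j=P_j$ for $j\neq i$, it suffices to recover $P_i$ from $\add T$. The natural way is to construct a second triangle of the form
\[
\bigoplus_{\beta:\, t(\beta)=i} P_{s(\beta)}\longrightarrow T_i \longrightarrow \Sigma^2 P_i \longrightarrow \Sigma\,\Bigl(\bigoplus_{\beta:\, t(\beta)=i} P_{s(\beta)}\Bigr),
\]
whose existence is forced by the shape of the Ginzburg dg algebra: the degree $-1$ arrows $\beta^\ast$ and the degree $-2$ loops $t_i$ provide precisely the higher homotopies needed to splice the ``outgoing'' cone together with the ``incoming'' arrows into a Koszul-type resolution of $\Sigma^2 P_i$. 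Because $\Gamma$ is concentrated in non-positive degrees, the compactness of the $P_j$ together with Theorem~\ref{thm:findim-compact} guarantees that what lives in $\per(\Gamma)$ really does generate it.

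The heart of the argument is then the explicit computation of $B:=\RHom_\Gamma(T,T)$ and the construction of a quasi-isomorphism of dg algebras $\Gamma'\to B\op$ (or $B$, depending on conventions). Using the cofibrant resolutions of the $T_j$ built from the shape of $\tilde Q$ in section~\ref{ss:cofibrant-resolutions}, the morphism complexes $\cHom_\Gamma(T_j,T_k)$ can be read off directly. One finds: for $j,k\neq i$, the morphism complex recovers $e_k\Gamma e_j$; the pieces $\cHom_\Gamma(T_i,P_k)$ and $\cHom_\Gamma(P_j,T_i)$ contribute generators dual to the new arrows $\alpha^\star$ and $\beta^\star$ of $Q'$; and $\cHom_\Gamma(P_k,P_j)$ combined with the cone structure of $T_i$ produces generators for the composite arrows $[\alpha\beta]$. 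I would then send each generating arrow of $\tilde Q'$ (in degrees $0$, $-1$, $-2$) to the corresponding explicit cocycle in $B$, and verify by direct calculation that the Leibniz rule and the relations $d(a^\ast)=\partial_a W'$, $d(t_j)=e_j\sum_a[a,a^\ast]e_j$ hold modulo coboundaries; the potential $W'_1+W'_2$ is engineered exactly so that these relations match the A-infinity structure visible in $B$. This verification, and the checking that the resulting map is a quasi-isomorphism (degree by degree, using the filtration by path length and a Mittag-Leffler argument as in Lemma~\ref{L:reduction}), is the main technical obstacle.

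Once $F$ is established on $\cd(\Gamma')$, the restriction to $\per$ is automatic because $F$ sends the generators $P'_j$ to compact objects, and the restriction to $\cd_{fd}$ follows from part a) of Theorem~\ref{thm:findim-compact} together with the fact that $F$ preserves finite-dimensionality of homology, which in turn can be checked on the simples $S_j'$. For part b), the Ginzburg algebras $\Gamma$ and $\Gamma_{\mathrm{red}}$ are linked by the quasi-isomorphism of Lemma~\ref{L:reduction}, and likewise $\Gamma'$ with the Ginzburg dg algebra of $\mu_i(Q,W)$. These quasi-isomorphisms induce equivalences of the relevant derived categories (and their perfect/finite-dimensional subcategories), under which $F$ transports to the desired $F_{\mathrm{red}}$.
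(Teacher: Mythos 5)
Your overall strategy is different from the paper's and, as written, it has a genuine gap at its central step. You propose to prove that $T$ is a compact generator of $\cd(\Gamma)$ and that the canonical map $\Gamma'\to\cHom_\Gamma(T,T)$ is a \emph{quasi-isomorphism}, and then invoke dg Morita theory. The construction of the dg algebra morphism $\Gamma'\to\Lambda=\cHom_\Gamma(T,T)$ is indeed what the paper does (and the relations hold on the nose, not merely up to coboundary, once the images of the arrows are chosen correctly). But the assertion that this morphism is a quasi-isomorphism is exactly the point you cannot get for free, and your proposed method — a path-length filtration plus Mittag--Leffler as in Lemma~\ref{L:reduction} — does not apply: in that lemma the associated graded pieces are visibly contractible, whereas here you would have to compare $H^n(\Gamma')$ with $H^n(\Lambda)$ for all $n<0$, and neither side is explicitly computable (the negative-degree homology of a completed Ginzburg algebra is in general large and unknown). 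The quasi-isomorphism is true, but in the paper it is a \emph{consequence} of the theorem, not an input. The paper circumvents the computation entirely: it only uses the dg algebra map to make $T$ a $\Gamma'$-$\Gamma$-bimodule, obtains the adjoint pair $(F,G)$, checks by explicit computation that $GF(S'_j)\cong S'_j$ via the unit of adjunction (Lemma~\ref{L:imageofsimples}, Proposition~\ref{P:equivfd}), deduces an equivalence on $\cd_{fd}$ and hence on $\cd_0$, and then bootstraps to full faithfulness on $\per(\Gamma')$ using the Nakayama functor of section~\ref{ss:Nakayama-functor} (which rests on Theorem~\ref{thm:findim-compact}~b)), finishing with the coproduct argument. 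You would need to supply an argument of comparable substance to establish your quasi-isomorphism; without it the Morita-theoretic conclusion does not follow.

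A secondary point: the auxiliary triangle
\[
\bigoplus_{\beta:\,t(\beta)=i}P_{s(\beta)}\longrightarrow T_i\longrightarrow \Sigma^2P_i\longrightarrow \Sigma\Bigl(\bigoplus_{\beta:\,t(\beta)=i}P_{s(\beta)}\Bigr)
\]
you invoke for generation does not exist in $\per(\Gamma)$ in general; the correct statement (see section~\ref{s:nearly-Morita-equivalence}) is that the cone of a map $\Sigma T_i\to R_i$ is $\bp S_i$, so $\Sigma T_i$ and $R_i$ only agree in the quotient by $\cd_{fd}(\Gamma)$. Fortunately you do not need any of this: the defining triangle $P_i\to\bigoplus_\alpha P_{t(\alpha)}\to T_i\to\Sigma P_i$ already places $P_i$ in the thick subcategory generated by $T_i$ and the $P_j$, $j\neq i$ (no loops at $i$), so $T$ generates $\per(\Gamma)$ immediately. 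Your remarks on the restrictions to $\per$ and $\cd_{fd}$ and on part~b) via Lemma~\ref{L:reduction} are fine once the equivalence itself is in place.
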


\begin{proof}
a) We will prove this in the following two subsections: we will
first construct a $\Gamma'$-$\Gamma$-bimodule $T$, and then we will
show that the left derived tensor functor and the right derived Hom
functor associated to $T$ are quasi-inverse equivalences.

b) This follows from a) and Lemma~\ref{L:reduction}, since
quasi-isomorphisms of dg algebras induce triangle equivalences in
their derived categories.
\end{proof}

\subsection{A dg $\Gamma'$-$\Gamma$-bimodule}\label{ss:bimodule}
First let us analyze the definition of $T_i$. For an arrow $\alpha$
of $Q$ starting at $i$, let $P_{\alpha}$ be a copy of
$P_{t(\alpha)}$. We denote the element $e_{t(\alpha)}$ of this copy
by $e_{\alpha}$. Let $P_{\Sigma i}$ be a copy of $\Sigma P_{i}$. We
denote the element $e_i$ of this copy by $e_{\Sigma i}$. Let $T_{i}$
be the mapping cone of the canonical inclusion
\begin{eqnarray*}P_{i}&\stackrel{\iota}{\longrightarrow}&
\bigoplus_{\alpha\in Q_1:s(\alpha)=i}P_{\alpha}\\
a&\mapsto& \sum_{\alpha\in Q_1:s(\alpha)=i} e_{\alpha}\alpha a.
\end{eqnarray*}
Therefore, as a graded space, $T_i$ has the decomposition
\[T_{i}=P_{\Sigma
i}\oplus\bigoplus_{\alpha\in Q_1:s(\alpha)=i}P_{\alpha},\] and the
differential is given by
\[d_{T_{i}}(e_{\Sigma
i}a_{i}+\sum_{\alpha\in Q_1:s(\alpha)=i}e_{\alpha}a_{\alpha})
=-e_{\Sigma i}d_{\Gamma}(a_{i})+\sum_{\alpha\in
Q_1:s(\alpha)=i}e_{\alpha}(d_{\Gamma}(a_{\alpha})+\alpha a_{i}).\]
Note that each $P_{\alpha}$ is a dg submodule of $T_{i}$.

In this and next subsection, $T_j$ will be $P_j$ for a vertex
$j$ of $Q$ different from $i$. Let $T$ be the direct sum of the
$T_j$, where $j$ runs over all vertices of $Q$, \ie
\[T=\bigoplus_{j\in Q_0}T_j.\]
In the following, for each arrow $\rho'$ of $\tilde{Q'}$, we will
construct  a morphism $f_{\rho'}$ in
$\Lambda=\mathcal{H}om_{\Gamma}(T,T)$ and show that the assignment
$\rho' \to f_{\rho'}$ extends to a morphism of dg algebras from
$\Gamma'$ to $\Lambda$.
\bigskip

For a vertex $j$ of $Q$, let $f_{j}:T_j\rightarrow T_j$ be the identity map.

For an arrow $\alpha$ of $Q$ starting at $i$, we define the morphism
$f_{\alpha^{\star}}:T_{t(\alpha)}\rightarrow T_{i}$ of degree~$0$ as
the composition of the isomorphism $T_{t(\alpha)}\cong P_{\alpha}$
with the canonical embedding $P_{\alpha}\hookrightarrow T_{i}$, \ie
\[f_{\alpha^{\star}}:T_{t(\alpha)}\longrightarrow T_i,a\mapsto e_{\alpha}a,\] and define the morphism
$f_{\alpha^{\star *}}:T_{i}\rightarrow T_{t(\alpha)}$ of degree $-1$
by
\[f_{\alpha^{\star *}}(e_{\Sigma i}a_{i}+\sum_{\rho\in Q_1:s(\rho)=i}e_{\rho}a_{\rho})=-\alpha t_{i} a_{i}-\sum_{\rho\in Q_1:s(\rho)=i}\alpha\rho^{*}a_{\rho}.\]

For an arrow $\beta$ of $Q$ ending at $i$, we define the morphism
$f_{\beta^{\star}}:T_{i}\rightarrow T_{s(\beta)}$ of degree~$0$ by
\[f_{\beta^{\star}}(e_{\Sigma i}a_{i}+\sum_{\rho\in Q_1:s(\rho)=i}e_{\rho}a_{\rho})=-\beta^{*}a_{i}-\sum_{\rho\in Q_1:s(\rho)=i}(\partial_{\rho\beta}W)a_{\rho},\]
and define the morphism $f_{\beta^{\star
*}}:T_{s(\beta)}\rightarrow T_{i}$ of degree $-1$ as the composition
of the morphism $T_{s(\beta)}\rightarrow P_{\Sigma i},~a\mapsto
e_{\Sigma i}\beta a$ and the canonical embedding $P_{\Sigma
i}\hookrightarrow T_{i}$, \ie \[f_{\beta^{\star *}}:T_{s(\beta)}\longrightarrow T_i, a\mapsto e_{\Sigma
i}\beta a.\]

For a pair of arrows $\alpha$, $\beta$ of $Q$ such that $\alpha$ starts at $i$ and $\beta$ ends at $i$, we define
\[f_{[\alpha\beta]}: T_{s(\beta)}\longrightarrow T_{t(\alpha)}, a\mapsto
\alpha\beta a,\] and
\[f_{[\alpha\beta]^{*}}=0:T_{t(\alpha)}\longrightarrow T_{s(\beta)}.\]

For an arrow $\gamma$ of $Q$ not incident to $i$, we denote by
$f_{\gamma}$ and $f_{\gamma^{*}}$ respectively the
left-multiplication by $\gamma$ from $T_{s(\gamma)}$ to
$T_{t(\gamma)}$ and the left-multiplication by $\gamma^{*}$ from
$T_{t(\gamma)}$ to $T_{s(\gamma)}$, \ie
\[f_{\gamma}:T_{s(\gamma)}\longrightarrow T_{t(\gamma)},a\mapsto \gamma a,\]
\[f_{\gamma^*}:T_{t(\gamma)}\longrightarrow T_{s(\gamma)},a\mapsto \gamma^* a.\]

For a vertex $j$ of $Q$ different from $i$, we define $f_{t'_{j}}$
as the left-multiplication by $t_j$ from $T_{j}$ to itself, \ie
\[f_{t'_j}:T_j\longrightarrow T_j, a\mapsto t_j a.\] It is a
morphism of degree $-2$. We define $f_{t'_{i}}$ as the
$\Gamma$-linear morphism of degree $-2$ from $T_{i}$ to itself given by
\[f_{t'_i}(e_{\Sigma i}a_{i}+\sum_{\rho\in Q_1:s(\rho)=i}e_{\rho}a_{\rho})=-e_{\Sigma i}(t_i a_i+\sum_{\rho\in Q_1:s(\rho)=i}\rho^*a_{\rho}).\]

We extend all the morphisms defined above trivially to morphisms
from $T$ to itself.

\begin{proposition} The correspondence taking $e_j$ to $f_{j}$ for all vertices $j$ of $Q$
and taking an arrow $\rho'$ of $\tilde{Q}'$ to $f_{\rho'}$
 extends to a homomorphism of dg algebras
from $\Gamma'$ to $\Lambda=\mathcal{H}om_{\Gamma}(T,T)$. In this way, $T$ becomes a left dg $\Gamma'$-module.
\end{proposition}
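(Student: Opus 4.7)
The plan is to proceed in two stages. First, extend the given assignment on vertices and arrows to a continuous graded algebra homomorphism $\varphi : \Gamma' \to \Lambda$; then verify that $\varphi$ commutes with the differentials.

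For the first stage, set $R = \prod_{j \in Q_0} k e_j$. As a graded $k$-algebra, $\Gamma'$ is the completion, with respect to the ideal generated by the arrows of $\tilde{Q'}$, of the free $R$-tensor algebra on the graded $R$-bimodule spanned by $\tilde{Q'}_1$. Thus the prescription $\rho' \mapsto f_{\rho'}$ extends uniquely to a graded $R$-algebra morphism from the uncompleted tensor algebra into $\Lambda$. To extend this to $\Gamma'$ itself it suffices to exhibit a filtration on $\Lambda$ with respect to which $\Lambda$ is separated and complete and with respect to which the map is continuous; the natural candidate is the filtration inherited from the path-length filtration on $T$ (with $T_i$ filtered via its direct summands $P_{\Sigma i}$ and the $P_\alpha$), and continuity reduces to a straightforward check on generators using their explicit formulas.

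For the second stage, since $\varphi$ is a graded algebra map and $d_{\Gamma'}$, $d_\Lambda$ are graded derivations, the difference $d_\Lambda \circ \varphi - \varphi \circ d_{\Gamma'}$ is a continuous graded $\varphi$-derivation on $\Gamma'$, so it vanishes as soon as it vanishes on the arrows of $\tilde{Q'}$. The checks split into three groups. For each degree-zero arrow $\rho' \in Q'_1$, one must verify $d_\Lambda(f_{\rho'}) = 0$: the cases $\rho' = \gamma$ (not incident to $i$) and $\rho' = [\alpha\beta]$ are immediate because these morphisms are left multiplications by degree-zero elements of $\Gamma$, while $\rho' = \alpha^\star$ and $\rho' = \beta^\star$ require a short direct computation against the explicit formula for $d_{T_i}$ recorded above. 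For each arrow $\rho'^*$ of degree $-1$ one must verify $d_\Lambda(f_{\rho'^*}) = \varphi(\partial_{\rho'} W')$, with $W' = W'_1 + W'_2$; and for each loop $t'_j$ of degree $-2$ one must verify $d_\Lambda(f_{t'_j}) = \varphi\bigl(e_j\sum_{\rho' \in Q'_1}[\rho', \rho'^*] e_j\bigr)$. The case $j \neq i$ of the last identity is routine because $f_{t'_j}$ is the left multiplication by $t_j \in \Gamma^{-2}$.

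The main obstacle lies in the remaining identities, namely the cyclic-derivative computations for $\rho'^* \in \{\alpha^{\star*},\, \beta^{\star*},\, [\alpha\beta]^*\}$ together with the loop identity at $j = i$. For $\alpha^{\star*}$ and $\beta^{\star*}$ the cyclic derivatives $\partial_{\alpha^\star}W'$ and $\partial_{\beta^\star}W'$ receive contributions only from $W'_2 = \sum [\alpha\beta]\beta^\star\alpha^\star$, while $d_\Lambda(f_{\alpha^{\star*}})$ and $d_\Lambda(f_{\beta^{\star*}})$ are obtained by differentiating the multi-term formulas in Section~\ref{ss:bimodule} which mix contributions of $t_i$, of the $\rho^*$'s, and of $\partial_{\rho\beta}W$; the match comes from the way $W'_1$ is produced from $W$ by substituting $[\alpha\beta]$ for $\alpha\beta$. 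For the dual arrows $[\alpha\beta]^*$, the check stays inside $W'_1$ and reproduces the cyclic-derivative identity for $W$. The loop identity at $j = i$ is a term-by-term match once one decomposes $e_i\sum_{\rho'}[\rho', \rho'^*] e_i$ into the parts contributed by $\alpha^\star$'s (with $\alpha$ starting at $i$) and $\beta^\star$'s (with $\beta$ ending at $i$). No individual step is conceptually deep, but the combinatorial bookkeeping with signs and indexings is where all the content lies; once it is completed, $\varphi$ is a dg algebra homomorphism and $T$ acquires its left dg $\Gamma'$-module structure by pullback along $\varphi$.
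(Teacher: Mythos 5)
Your overall strategy is the same as the paper's: first extend the assignment on vertices and arrows to a continuous algebra homomorphism $\Gamma'\to\Lambda$, then verify compatibility with the differentials generator by generator, with exactly the three families of identities you list. However, two of your reductions, as stated, would fail.

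First, the convergence of the extension. You filter $\Lambda$ by the path-length filtration of $T$ and claim that continuity "reduces to a straightforward check on generators". That check does not go through: the natural candidate ideal is $\mathfrak{n}=\{f\in\Lambda\mid \im(f)\subset T\mathfrak{m}\}$, but $f_{\alpha^{\star}}$ is the split inclusion of the summand $P_{\alpha}$ into $T_i$, sending $e_{t(\alpha)}$ to $e_{\alpha}$, so its image is \emph{not} contained in $T\mathfrak{m}$ and $f_{\alpha^{\star}}\notin\mathfrak{n}$. The extension to the completion exists for a subtler reason, which is the one the paper isolates: although a single arrow-image may fail to lie in $\mathfrak{n}$, the composite of the images of \emph{any two} arrows of $\tilde{Q}'$ does lie in $\mathfrak{n}$. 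Hence the image of a path of length $\ell$ lies in $\mathfrak{n}^{\lfloor \ell/2\rfloor}$, and the image of an infinite sum of paths converges in the complete algebra $\Lambda$. Without this observation your first stage is not justified.

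Second, the identity for $[\alpha\beta]^{*}$. Since $f_{[\alpha\beta]^{*}}=0$ by definition, the identity to be proved is $f_{\partial_{[\alpha\beta]}W'}=0$, and this check does \emph{not} "stay inside $W'_1$": one has $\partial_{[\alpha\beta]}W'=\partial_{[\alpha\beta]}W'_1+\beta^{\star}\alpha^{\star}$, where $f_{\partial_{[\alpha\beta]}W'_1}$ is left multiplication by $\partial_{\alpha\beta}W$, which is nonzero in general; it is cancelled precisely by $f_{\beta^{\star}\alpha^{\star}}$, whose value on $e_{t(\alpha)}$ is $-\partial_{\alpha\beta}W$. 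So the $W'_2$-contribution is indispensable here, not only in the checks for $\alpha^{\star *}$ and $\beta^{\star *}$; as sketched, your argument for this case would produce a nonzero right-hand side. Apart from these two points, your organization of the verification agrees with the paper's Lemmas~\ref{L:dalpha}--\ref{L:dr}, though the substantive computations (in particular the use of $\sum_{\beta}\beta\,\partial_{a\beta}W=\partial_a W$ under hypothesis (c3), and the degree $-2$ loop identity at $i$) are only announced, not carried out.
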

\begin{proof} Recall that $\Lambda=\mathcal{H}om_{\Gamma}(T,T)$ is
the dg endomorphism algebra of the dg $\Gamma$-module $T$. The
$\mathfrak{m}$-adic topology of $\Gamma$ induces an
$\mathfrak{m}$-adic topology of $\Lambda$: If we let
\[\mathfrak{n}=\{f\in\Lambda|\im(f)\in T\mathfrak{m}\},\]
then the powers of $\mathfrak{n}$ form a basis of open
neighbourhoods of $0$. The topological algebra $\Lambda$ is complete
with respect to this topology. Note that the image $f_{a^{\star}}$
of the arrow $a^{\star}$ does not lie in $\mathfrak{n}$ but that the
composition of the images of any two arrows of $\tilde{Q}'$ does lie in
$\mathfrak{n}$. Thus the correspondence defined in the proposition
uniquely extends  to a continuous algebra homomorphism. We denote by
$f_{p}$ the image of an element $p$ in $\Gamma'$. It remains to
check that this map commutes with the differentials on generators
$\rho'$ of $\Gamma'$. Namely, we need to show
\begin{eqnarray*}
d_{\Lambda}(f_{\rho'})~~ &=& 0\text{ for each arrow }\rho'\text{ of }Q',\\
d_{\Lambda}(f_{\rho'^*}) &=& f_{\del_{\rho'} W'}\text{ for each arrow }\rho'\text{ of }Q',\\
d_{\Lambda}(f_{t'_j})~~ &=& f_j (\sum_{\rho'\in Q'_1}
[f_{\rho'},f_{\rho'^*}]) f_j\text{ for each vertex } j\text{ of }Q'.
\end{eqnarray*}
We will prove the first two equalities in the following
Lemmas~\ref{L:dalpha}--\ref{L:dgamma} and the last equality in
Lemma~\ref{L:dr}.
\end{proof}

\begin{lemma}\label{L:dalpha} Let $\alpha$ be an arrow of $Q$ starting at $i$. Then
\[d_{\Lambda}(f_{\alpha^{\star}})=0,\qquad d_{\Lambda}(f_{\alpha^{\star *}})=f_{\partial_{\alpha^{\star}}W'}.\]
\end{lemma}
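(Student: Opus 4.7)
The plan is to check both identities by evaluating each side on a general element; since every map in sight is supported on only a few summands of $T$, only $T_i$ and $T_{t(\alpha)}$ are relevant.

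For the first identity, $f_{\alpha^{\star}}$ is just the inclusion $T_{t(\alpha)}\cong P_\alpha\hookrightarrow T_i$. Reading off the formula for $d_{T_i}$, the extra term ``$\rho a_i$'' appears only on elements coming from the $P_{\Sigma i}$-summand; on the summands $P_\rho$ (and in particular on $P_\alpha$) the differential agrees with $d_\Gamma$. Hence $f_{\alpha^{\star}}$ commutes with differentials and $d_\Lambda(f_{\alpha^{\star}})=0$.

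For the second identity I would apply
\[
d_\Lambda(f_{\alpha^{\star *}}) = f_{\alpha^{\star *}}\circ d_T + d_T\circ f_{\alpha^{\star *}}
\]
(the sign is $+$ because $|f_{\alpha^{\star *}}|=-1$) to a general element $x=e_{\Sigma i}a_i + \sum_{\rho: s(\rho)=i}e_\rho a_\rho\in T_i$, expand via the Leibniz rule, and then substitute the identities $d_\Gamma(\rho^*)=\partial_\rho W$ and
\[
d_\Gamma(t_i)=\sum_{\beta: t(\beta)=i}\beta\beta^* \;-\; \sum_{\rho: s(\rho)=i}\rho^*\rho
\]
(the second using (c1)--(c2) to exclude loops and $2$-cycles at $i$). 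The terms $\alpha t_i d_\Gamma(a_i)$ and $\pm\alpha\rho^*\rho a_i$ will cancel, leaving
\[
d_\Lambda(f_{\alpha^{\star *}})(x) = -\sum_{\beta: t(\beta)=i}\alpha\beta\beta^* a_i \;-\; \sum_{\rho: s(\rho)=i}\alpha\cdot\partial_\rho W\cdot a_\rho.
\]

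On the other side, $W'_1$ contains no $\alpha^{\star}$, so only $W'_2=\sum_{\alpha',\beta'}[\alpha'\beta']\beta'^{\star}\alpha'^{\star}$ contributes to $\partial_{\alpha^{\star}}W'$, giving $\partial_{\alpha^{\star}}W'=\sum_{\beta: t(\beta)=i}[\alpha\beta]\beta^{\star}$. Evaluating $\sum_\beta f_{[\alpha\beta]}\circ f_{\beta^{\star}}$ on $x$ produces the same $-\alpha\beta\beta^* a_i$ contributions together with $-\sum_{\rho,\beta}\alpha\beta\cdot\partial_{\rho\beta}W\cdot a_\rho$. Matching the two sides then reduces to the chain-rule identity
\[
\sum_{\beta: t(\beta)=i}\beta\cdot\partial_{\rho\beta}W=\partial_\rho W,
\]
which is immediate from the definition of the cyclic derivative: in a cyclic representative of $W$, every occurrence of $\rho$ (with $s(\rho)=i$) is preceded cyclically by a unique arrow $\beta$, necessarily ending at $i$. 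The main obstacle is purely bookkeeping---tracking the Koszul signs coming from the odd degrees of $\rho^*$ and $t_i$ in the Leibniz expansion, and carefully splitting arrows of $Q$ according to whether they start or end at $i$ when inserting the formula for $d_\Gamma(t_i)$.
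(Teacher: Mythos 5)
Your proof is correct and follows essentially the same computation as the paper's: both reduce to the formulas $d_\Gamma(\rho^*)=\del_\rho W$ and $d_\Gamma(t_i)=\sum_{\beta:t(\beta)=i}\beta\beta^*-\sum_{\rho:s(\rho)=i}\rho^*\rho$ together with the identity $\sum_{\beta:t(\beta)=i}\beta\,\del_{\rho\beta}W=\del_\rho W$, the paper merely evaluating on the generators $e_{t(\alpha)}$, $e_a$, $e_{\Sigma i}$ rather than on a general element (equivalent by $\Gamma$-linearity). One small caveat: $\del_{\rho\beta}$ counts occurrences of the subword $\rho\beta$ in $W$ as written, not cyclically, so your justification of the chain-rule identity really rests on condition (c3) (no cycle of $W$ based at $i$), which rules out an occurrence of $\rho$ at the boundary of a cyclic word; the paper uses this equally implicitly.
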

\begin{proof} It suffices to check the equalities
on the generator $e_{t(\alpha)}$ of $T_{t(\alpha)}$ and on the
generators $e_{\Sigma i}$ and $e_{a}$, $a:i\rightarrow t(a)$ in $Q$,
of $T_{i}$. We have
\begin{eqnarray*}d_{\Lambda}(f_{\alpha^{\star}})(e_{t(\alpha)})&=&d_{T_{i}}(f_{\alpha^{\star}}(e_{t(\alpha)}))-(-1)^{0}f_{\alpha^{\star}}(d_{T_{t(\alpha)}}(e_{t(\alpha)}))\\
&=&d_{T_{i}}(e_{\alpha})\\
&=&0.
\end{eqnarray*}
Therefore $d_{\Lambda}(f_{\alpha^{\star}})=0$.

Let $a$ be any arrow of $Q$ starting from $i$. We have
\begin{eqnarray*}
d_{\Lambda}(f_{\alpha^{\star
*}})(e_{a})&=&d_{T_{t(\alpha)}}(f_{\alpha^{\star
*}}(e_{a}))-(-1)^{-1}f_{\alpha^{\star *}}(d_{T_{i}}(e_{a}))\\
&=&d_{T_{t(\alpha)}}(-\alpha a^{*})\\
&=&-\alpha \del_a W,\\
 d_{\Lambda}(f_{\alpha^{\star *}})(e_{\Sigma i})&=&d_{T_{t(\alpha)}}(f_{\alpha^{\star
*}}(e_{\Sigma i}))-(-1)^{-1}f_{\alpha^{\star *}}(d_{T_{i}}(e_{\Sigma i}))\\
&=&d_{T_{t(\alpha)}}(-\alpha t_{i})+f_{\alpha^{\star
*}}(\sum_{a\in Q_1:s(a)=i}e_{a}a)\\
&=&-\alpha(\sum_{\beta\in Q_1:t(\beta)=i}\beta\beta^*-\sum_{a\in Q_1:s(a)=i}a^*a)+\sum_{a\in Q_1:s(a)=i}(-\alpha a^{*})a\\
&=&-\alpha(\sum_{\beta\in Q_1:t(\beta)=i}\beta\beta^{*}).
\end{eqnarray*}

It follows from the definition of $W'$ that
\[\partial_{\alpha^{\star}}W'=\partial_{\alpha^{\star}}(W_1'+W_2')=\partial_{\alpha^{\star}}W_{2}'=\sum_{\beta\in Q_1:t(\beta)=i}[\alpha\beta]\beta^{\star}.\]
Thus
\begin{eqnarray*}
f_{\partial_{\alpha^{\star}}W'}(e_{a})&=&\sum_{\beta\in
Q_1:t(\beta)=i}f_{[\alpha\beta]}f_{\beta^{\star}}(e_{a})\\
&=&\sum_{\beta\in
Q_1:t(\beta)=i}f_{[\alpha\beta]}(-\partial_{a\beta}W)\\
&=&\sum_{\beta\in
Q_1:t(\beta)=i}(-\alpha\beta\partial_{a\beta}W)\\
&=&-\alpha\sum_{\beta\in
Q_1:t(\beta)=i}(\beta\partial_{a\beta}W)\\
&=&-\alpha\partial_{a}W,\\
 f_{\partial_{\alpha^{\star}}W'}(e_{\Sigma i})&=&\sum_{\beta\in
Q_1:t(\beta)=i}f_{[\alpha\beta]}f_{\beta^{\star}}(e_{\Sigma i})\\
&=&\sum_{\beta\in
Q_1:t(\beta)=i}f_{[\alpha\beta]}(-\beta^{*})\\
&=&-\alpha(\sum_{\beta\in Q_1:t(\beta)=i}\beta\beta^{*}).
\end{eqnarray*}
Therefore $d_{\Lambda}(f_{\alpha^{\star
*}})=f_{\partial_{\alpha^{\star}}W'}$.
\end{proof}

\begin{lemma}\label{L:dbeta} Let $\beta$ be an arrow of $Q$ ending at $i$. Then
\[d_{\Lambda}(f_{\beta^{\star}})=0,\qquad d_{\Lambda}(f_{\beta^{\star *}})=f_{\partial_{\beta^{\star}}W'}.\]
\end{lemma}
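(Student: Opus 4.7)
The plan is to proceed by a direct computation on generators, mirroring exactly the structure of Lemma~\ref{L:dalpha}. Since $\beta$ ends at $i$, the map $f_{\beta^\star}$ goes from $T_i$ to $T_{s(\beta)}$, so to prove $d_\Lambda(f_{\beta^\star})=0$ it suffices to evaluate on the module generators $e_{\Sigma i}$ and $e_a$ (for each arrow $a$ of $Q$ with $s(a)=i$) of $T_i$. For the second equality, since $f_{\beta^{\star*}}$ goes from $T_{s(\beta)}$ to $T_i$, one evaluates on the single generator $e_{s(\beta)}$.

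On $e_a$, using $d_{T_i}(e_a)=0$ (which follows from $d_\Gamma(e_{t(a)})=0$ and the formula for $d_{T_i}$), one gets $d_\Lambda(f_{\beta^\star})(e_a)=-d_\Gamma(\partial_{a\beta}W)$, which vanishes because $\partial_{a\beta}W$ is of degree $0$ and $d$ annihilates every arrow of $Q$. On $e_{\Sigma i}$, expanding via $d_{T_i}(e_{\Sigma i})=\sum_{\alpha:s(\alpha)=i}e_\alpha\alpha$ and the definitions of the various $f$'s, I obtain
\[
d_\Lambda(f_{\beta^\star})(e_{\Sigma i})\;=\;-\partial_\beta W\;+\;\sum_{\alpha:s(\alpha)=i}\bigl(\partial_{\alpha\beta}W\bigr)\alpha.
\]
This expression must vanish, and the key identity I will use is
\[
\partial_\beta W\;=\;\sum_{\alpha:s(\alpha)=i}(\partial_{\alpha\beta}W)\,\alpha.
\]
The identity is proved by cyclic bookkeeping: in any decomposition $c=u\beta v$ of a cycle $c$ appearing in $W$, the path $u$ starts at $i=t(\beta)$ and ends at $t(c)\neq i$ (by condition (c3)); hence, read in traversal direction, $u$ begins with some arrow $\alpha$ with $s(\alpha)=i$, giving a unique factorization $u=u'\alpha$ and therefore $c=u'(\alpha\beta)v$ with $vu=(vu')\alpha$. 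Summing over $\alpha$ reorganizes the decompositions appearing in $\partial_\beta W$ exactly as those appearing in $\sum_\alpha(\partial_{\alpha\beta}W)\alpha$. This combinatorial matching is the main (mildly delicate) point, and I would state it as the principal lemma to settle first.

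For $d_\Lambda(f_{\beta^{\star*}})=f_{\partial_{\beta^\star}W'}$ evaluated on $e_{s(\beta)}$, the left-hand side becomes $d_{T_i}(e_{\Sigma i}\beta)$, which using the formula for $d_{T_i}$ (with $a_i=\beta$) equals $\sum_{\alpha:s(\alpha)=i}e_\alpha\alpha\beta$. On the right-hand side, inspection of $W'=W'_1+W'_2$ shows that $\beta^\star$ does not occur in $W'_1$, while in $W'_2=\sum_{\alpha,\beta'}[\alpha\beta']\beta'^\star\alpha^\star$ its only occurrences come from the terms with $\beta'=\beta$, whose cyclic derivatives yield $\partial_{\beta^\star}W'=\sum_{\alpha:s(\alpha)=i}\alpha^\star[\alpha\beta]$. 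Applying the definitions of $f_{[\alpha\beta]}$ (left multiplication by $\alpha\beta$) and $f_{\alpha^\star}$ (embedding into the summand $P_\alpha$) to $e_{s(\beta)}$ produces $\sum_\alpha e_\alpha\alpha\beta$, matching the left-hand side.

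All steps are then routine signed verifications; the only nontrivial ingredient is the cyclic identity above, whose proof relies crucially on hypothesis (c3). The organization mirrors Lemma~\ref{L:dalpha} so closely that I would write the proof as a side-by-side computation, reducing the risk of sign errors in the $(-1)^{-1}$ factors appearing in $d_\Lambda$.
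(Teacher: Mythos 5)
Your proposal is correct and follows essentially the same route as the paper: evaluation of both sides on the generators $e_{\Sigma i}$, $e_a$ of $T_i$ and $e_{s(\beta)}$ of $T_{s(\beta)}$, with the cancellation $-\partial_\beta W+\sum_{\alpha:s(\alpha)=i}(\partial_{\alpha\beta}W)\alpha=0$ doing the real work in the first identity and $\partial_{\beta^\star}W'=\partial_{\beta^\star}W'_2=\sum_\alpha\alpha^\star[\alpha\beta]$ in the second. The only difference is presentational: you isolate and justify the cyclic identity $\partial_\beta W=\sum_\alpha(\partial_{\alpha\beta}W)\alpha$ via condition (c3), which the paper uses without comment; making that explicit is a reasonable (and correct) addition.
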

\begin{proof}
For an arrow $\alpha$ of $Q$ starting at $i$, we have
\begin{eqnarray*}
d_{\Lambda}(f_{\beta^{\star}})(e_{\alpha})&=&d_{T_{s(\beta)}}(f_{\beta^{\star}}(e_{\alpha}))-(-1)^{0}f_{\beta^{\star}}(d_{T_{k}}(e_{\alpha}))\\
&=&d_{T_{s(\beta)}}(-\partial_{\alpha\beta}W)\\
&=&0,\\
d_{\Lambda}(f_{\beta^{\star}})(e_{\Sigma i})&=&d_{T_{s(\beta)}}(f_{\beta^{\star}}(e_{\Sigma i}))-(-1)^{0}f_{\beta^{\star}}(d_{T_{i}}(e_{\Sigma i}))\\
&=&d_{T_{s(\beta)}}(-\beta^{*})-f_{\beta^{\star}}(\sum_{\alpha\in
Q_1:s(\alpha)=i}e_{\alpha}\alpha)\\
&=&d_{T_{s(\beta)}}(-\beta^{*})-\sum_{\alpha\in
Q_1:s(\alpha)=i}(-(\partial_{\alpha\beta}W)\alpha )\\
&=&-\del_{\beta}W+ \sum_{\alpha\in Q_1:s(\alpha)=i}(\partial_{\alpha\beta}W)\alpha\\
&=&0.
\end{eqnarray*}
This shows the first equality.
The second equality holds because
\begin{eqnarray*}
d_{\Lambda}(f_{\beta^{\star
*}})(e_{s(\beta)})&=&d_{T_{i}}(f_{\beta^{\star
*}}(e_{s(\beta)}))-(-1)^{-1}f_{\beta^{\star *}}(d_{T_{s(\beta)}}(e_{s(\beta)}))\\
&=&d_{T_{i}}(e_{\Sigma i} \beta)\\
&=&\sum_{\alpha\in Q_1:s(\alpha)=i}e_{\alpha}\alpha\beta,\\
f_{\partial_{\beta^{\star}}W'}(e_{s(\beta)})&=&f_{\partial_{\beta^{\star}}W_2'}(e_{s(\beta)})\\
&=&\sum_{\alpha\in Q_1:s(\alpha)=i}f_{\alpha^{\star}}f_{[\alpha\beta]}(e_{s(\beta)})\\
&=&\sum_{\alpha\in Q_1:s(\alpha)=i}f_{\alpha^{\star}}(\alpha\beta)\\
&=&\sum_{\alpha\in Q_1:s(\alpha)=i}e_{\alpha}\alpha\beta.
\end{eqnarray*}
\end{proof}

\begin{lemma}\label{L:dalphabeta} Let $\alpha$, $\beta$ be arrows of $Q$ starting and
ending at $i$ respectively. Then
\[d_{\Lambda}(f_{[\alpha\beta]})=0,\qquad d_{\Lambda}(f_{[\alpha\beta]^{*}})=f_{\partial_{[\alpha\beta]}W'}.\]
\end{lemma}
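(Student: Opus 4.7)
The plan is to verify both equalities by evaluating on the canonical generator of the source module, in the same style as Lemmas~\ref{L:dalpha} and~\ref{L:dbeta}. Condition~(c1) (no loops) forces $t(\alpha)\neq i$ and $s(\beta)\neq i$, so $T_{s(\beta)}=P_{s(\beta)}$ and $T_{t(\alpha)}=P_{t(\alpha)}$ are ordinary projective dg $\Gamma$-modules with free generators $e_{s(\beta)}$ and $e_{t(\alpha)}$.

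The first equality is immediate: $f_{[\alpha\beta]}$ is the left multiplication by $\alpha\beta\in\Gamma$, so
\[
d_\Lambda(f_{[\alpha\beta]})(e_{s(\beta)})=d_\Gamma(\alpha\beta)=d_\Gamma(\alpha)\beta+\alpha d_\Gamma(\beta)=0,
\]
since $\alpha,\beta$ are degree-$0$ arrows of $Q$ and hence $d_\Gamma$-closed.

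For the second equality, the left-hand side vanishes because $f_{[\alpha\beta]^*}=0$, so the task reduces to showing $f_{\partial_{[\alpha\beta]}W'}(e_{t(\alpha)})=0$. I would split $W'=W_1'+W_2'$ and treat the two pieces separately. From $W_2'=\sum_{\alpha',\beta'}[\alpha'\beta']\beta'^\star\alpha'^\star$ one reads off $\partial_{[\alpha\beta]}W_2'=\beta^\star\alpha^\star$, and by the definitions of $f_{\alpha^\star}$ and $f_{\beta^\star}$,
\[
f_{\beta^\star\alpha^\star}(e_{t(\alpha)})=f_{\beta^\star}(e_\alpha)=-\partial_{\alpha\beta}W.
\]
For the $W_1'$-piece, the occurrences of $[\alpha\beta]$ in the rewritten cycles $c'=U'[\alpha\beta]V'$ of $W_1'$ are in bijection with the occurrences of $\alpha\beta$ in the original cycles $c=U\alpha\beta V$ of $W$, so $\partial_{[\alpha\beta]}W_1'$ is a sum of the rewritten remainders $V'U'$. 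Each factor of $V'U'$ is either an arrow of $Q$ not incident to $i$ or a merged arrow $[\alpha''\beta'']$, and on both types of arrows $f$ acts as left multiplication by the corresponding element of $\Gamma$; hence $f_{V'U'}(e_{t(\alpha)})=VU$, and summing over cycles and occurrences gives $f_{\partial_{[\alpha\beta]}W_1'}(e_{t(\alpha)})=\partial_{\alpha\beta}W$. Adding the two contributions yields zero.

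The hard part is to justify that $f_{V'U'}$ really coincides with left multiplication by $VU\in\Gamma$. For this I would invoke condition~(c3): the base point $s(c)=t(c)$ of each cycle $c$ in $W$ is distinct from $i$, so concatenating $V$ and $U$ at their common endpoint does not introduce a new composition of the form ``arrow ending at $i$ followed by arrow starting at $i$'' which would be missed by the internal rewriting inside $V$ and $U$. The composition of arrow-wise images $f_{\rho'_1}\circ\cdots\circ f_{\rho'_m}$ along $V'U'$ then telescopes cleanly to left multiplication by the product $VU$, as required.
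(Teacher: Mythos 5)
Your proof is correct and follows essentially the same route as the paper's: the same evaluation on canonical generators for the first identity, the same splitting $\partial_{[\alpha\beta]}W'=\partial_{[\alpha\beta]}W_1'+\beta^{\star}\alpha^{\star}$ with $f_{\beta^{\star}\alpha^{\star}}(e_{t(\alpha)})=-\partial_{\alpha\beta}W$, and the same key observation that $f_{[p]}$ is left multiplication by $p$ for a path $p$ of $Q$ neither starting nor ending at $i$. The paper merely states this last observation without proof, whereas you also justify it using (c1)--(c3); that is a welcome extra level of detail but not a different argument.
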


\begin{proof}

The first equality follows from
\begin{eqnarray*}
d_{\Lambda}(f_{[\alpha\beta]})(e_{s(\beta)})&=&d_{T_{t(\alpha)}}(f_{[\alpha\beta]}(e_{s(\beta)}))-(-1)^{0}f_{[\alpha\beta]}(d_{T_{s(\beta)}}(e_{s(\beta)}))\\
&=&d_{T_{t(\alpha)}}(\alpha\beta)\\
&=&0.
\end{eqnarray*}

Since $f_{[\alpha\beta]^{*}}$ is the zero morphism, its differential
$d_{\Lambda}(f_{[\alpha\beta]^{*}})$ is zero as well. On the other hand,
\begin{eqnarray*}
f_{\partial_{[\alpha\beta]}W'}(e_{t(\alpha)})&=&f_{\partial_{[\alpha\beta]}W'_1+\beta^{\star}\alpha^{\star}}(e_{t(\alpha)})\\
&=&f_{\partial_{[\alpha\beta]}W'_1}(e_{t(\alpha)})+f_{\beta^{\star}\alpha^{\star}}(e_{t(\alpha)})\\
&=&f_{[\partial_{\alpha\beta}W]}(e_{t(\alpha)})+(-\partial_{\alpha\beta}W)\\
&=&0.
\end{eqnarray*}
Here we applied the following observation: Let $p$ be a path of $Q$
neither starting or ending at $i$ and let $[p]$ be the path of $Q'$
obtained from $p$ by replacing all $ab$ by $[ab]$, where $a$, $b$
are a pair of arrows of $Q$ such that $a$ starts at $i$ and $b$ ends
at $i$. Then the map $f_{[p]}$ is the left multiplication by $p$.
Therefore
\[f_{\partial_{[\alpha\beta]}W'}=0=d_{\Lambda}(f_{[\alpha\beta]^{*}}).\]
\end{proof}

\begin{lemma}\label{L:dgamma} Let $\gamma$ be an arrow of $Q$ not starting or ending at
$i$. Then
\[d_{\Lambda}(f_{\gamma})=0,\qquad d_{\Lambda}(f_{\gamma^{*}})=f_{\partial_{\gamma}W'}.\]
\end{lemma}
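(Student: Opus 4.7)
The approach is the same as in the preceding lemmas (\ref{L:dalpha}, \ref{L:dbeta}, \ref{L:dalphabeta}): evaluate both sides on the natural generators, use the Leibniz rule and the defining differential of $\Gamma$, and then rewrite $\del_\gamma W'$ in terms of $\del_\gamma W$ to identify the result with $f_{\del_\gamma W'}$.

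Since $\gamma$ is not incident to $i$, the dg modules $T_{s(\gamma)}=P_{s(\gamma)}$ and $T_{t(\gamma)}=P_{t(\gamma)}$ are just the standard indecomposable projectives with the differential inherited from $\Gamma$. Evaluating $d_\Lambda(f_\gamma) = d_{T_{t(\gamma)}}\circ f_\gamma - f_\gamma\circ d_{T_{s(\gamma)}}$ on the generator $e_{s(\gamma)}$ of $T_{s(\gamma)}$ produces $d_\Gamma(\gamma) = 0$, giving the first equality. For the second, using $|f_{\gamma^*}|=-1$, the Leibniz rule gives
\[
d_\Lambda(f_{\gamma^*})(a) \;=\; d_\Gamma(\gamma^* a) + \gamma^* d_\Gamma(a) \;=\; d_\Gamma(\gamma^*)\, a \;=\; (\del_\gamma W)\,a,
\]
so that $d_\Lambda(f_{\gamma^*})$ is left multiplication by $\del_\gamma W$ from $P_{t(\gamma)}$ to $P_{s(\gamma)}$.

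It then remains to identify this operator with $f_{\del_\gamma W'}$. Since $\gamma$ does not appear in $W'_2 = \sum [\alpha\beta]\beta^\star\alpha^\star$, we have $\del_\gamma W' = \del_\gamma W'_1$, and I claim $\del_\gamma W'_1 = [\del_\gamma W]$, where $[\cdot]$ denotes the substitution $\alpha\beta \mapsto [\alpha\beta]$ used to define $W'_1$. For each cyclic summand $c = u\gamma v$ of $W$, $\gamma$ occurs at the same position in $[c]$; a cyclic inspection shows that the complement $v''u''$ extracted from $[c]$ coincides with $[vu]$, the only nontrivial case being when the cyclic junction between $u$ and $v$ lies at the vertex $i$, in which case a single $\alpha\beta$-pair straddling this junction is absorbed identically into $[\alpha\beta]$ on both sides. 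The observation at the end of the proof of Lemma~\ref{L:dalphabeta} then asserts that $f_{[p]}$ is left multiplication by $p$ for any path $p$ of $Q$ whose endpoints are distinct from $i$; since every path occurring in $\del_\gamma W$ runs from $t(\gamma)$ to $s(\gamma)$, both different from $i$, this applies termwise and yields $f_{\del_\gamma W'} = f_{[\del_\gamma W]} = $ left multiplication by $\del_\gamma W = d_\Lambda(f_{\gamma^*})$.

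The main obstacle is the cyclic-junction bookkeeping for $\del_\gamma W'_1 = [\del_\gamma W]$: one has to check that the fused arrow $[\alpha\beta]$ appears in exactly the same position in the path $v''u''$ (read off from $[c]$) as it does after applying $[\cdot]$ to $vu$. Apart from this combinatorial verification, the proof is a routine unwinding of the definitions, parallel to the earlier lemmas.
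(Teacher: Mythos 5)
Your proof is correct and follows essentially the same route as the paper: evaluate $d_\Lambda(f_\gamma)$ and $d_\Lambda(f_{\gamma^*})$ on the generators to get $d_\Gamma(\gamma)=0$ and $\del_\gamma W$, then identify $f_{\del_\gamma W'}=f_{[\del_\gamma W]}$ with left multiplication by $\del_\gamma W$ via the observation from Lemma~\ref{L:dalphabeta}. The only difference is that you spell out the combinatorial identity $\del_\gamma W'_1=[\del_\gamma W]$, which the paper simply asserts; this is a harmless (indeed welcome) extra verification.
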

\begin{proof}

The first equality follows from
\begin{eqnarray*}
d_{\Lambda}(f_{\gamma})(e_{s(\gamma)})&=&d_{T_{t(\gamma)}}(f_{\gamma}(e_{s(\gamma)}))-(-1)^{0}f_{\gamma}(d_{T_{s(\gamma)}}(e_{s(\gamma)}))\\
&=&d_{T_{t(\gamma)}}(\gamma)\\
&=&0.
\end{eqnarray*}

For the second, we have
\begin{eqnarray*}
d_{\Lambda}(f_{\gamma^{*}})(e_{t(\gamma)})&=&d_{T_{s(\gamma)}}(f_{\gamma^{*}}(e_{t(\gamma)}))-(-1)^{-1}f_{\gamma^{*}}(d_{T_{t(\gamma)}}(e_{t(\gamma)}))\\
&=&d_{T_{s(\gamma)}}(\gamma^{*})\\
&=&\partial_{\gamma}W,\\
f_{\partial_{\gamma}W'}(e_{t(\gamma)})&=&f_{\partial_{\gamma}W'_{1}}(e_{t(\gamma)})\\
&=&f_{[\del_{\gamma}W]}(e_{t(\gamma)})\\
&=&\partial_{\gamma}W.
\end{eqnarray*}
Therefore $d_{\Lambda}(f_{\gamma^{*}})=f_{\partial_{\gamma}W'}$.

\end{proof}

\begin{lemma}\label{L:dr}
Let $j\in Q_0$. Then
\[d_{\Lambda}(f_{t'_{j}})=\sum_{\rho'\in Q_1':t(\rho')=j}f_{\rho'}f_{\rho'^{*}}-\sum_{\rho'\in Q_1':s(\rho')=j}f_{\rho'^{*}}f_{\rho'}\]
\end{lemma}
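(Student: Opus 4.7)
The plan is to exploit that every morphism appearing on either side of the claimed equality is right $\Gamma$-linear, so both sides are right $\Gamma$-linear endomorphisms of $T_j$. Hence it suffices to check equality after evaluation on a set of right-$\Gamma$-module generators of $T_j$. The argument splits naturally into two cases, according to whether $j\neq i$ or $j=i$.

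\emph{Case $j\neq i$.} Here $T_j=P_j=e_j\Gamma$ is generated by the idempotent $e_j$, and $f_{t'_j}$ is left multiplication by $t_j$. The Leibniz rule then shows that $d_{\Lambda}(f_{t'_j})$ is left multiplication by
\[
d(t_j)=\sum_{a\in Q_1,\,t(a)=j}aa^{*}-\sum_{a\in Q_1,\,s(a)=j}a^{*}a.
\]
The arrows of $Q'$ incident to $j$ split into four kinds: the arrows $\gamma$ of $Q$ not incident to $i$; the arrows $\alpha^{\star}$ for $\alpha\colon i\to j$ in $Q$; the arrows $\beta^{\star}$ for $\beta\colon j\to i$ in $Q$; and the arrows $[\alpha\beta]$ with $t(\alpha)=j$ or $s(\beta)=j$. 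Using the formulas of Section~\ref{ss:bimodule} to compute $(f_{\rho'}f_{\rho'^{*}})(e_j)$ and $(f_{\rho'^{*}}f_{\rho'})(e_j)$ for each such $\rho'$, all contributions coming from the $[\alpha\beta]$ vanish since $f_{[\alpha\beta]^{*}}=0$, and the remaining terms assemble precisely into $d(t_j)\cdot e_j$.

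\emph{Case $j=i$.} Now $T_i$ is generated, as a right $\Gamma$-module, by $e_{\Sigma i}$ together with the $e_{\sigma}$ for arrows $\sigma$ of $Q$ with $s(\sigma)=i$, and the only arrows of $Q'$ incident to $i$ are the $\alpha^{\star}$ and the $\beta^{\star}$. The right hand side therefore reduces to
\[
\sum_{\alpha\in Q_1,\,s(\alpha)=i}f_{\alpha^{\star}}f_{\alpha^{\star *}}\;-\;\sum_{\beta\in Q_1,\,t(\beta)=i}f_{\beta^{\star *}}f_{\beta^{\star}}.
\]
Evaluating both sides on $e_{\Sigma i}$ is a direct bookkeeping involving the explicit formula for $f_{t'_i}$ and the differential on $T_i$, the two expressions being matched term by term via $d(t_i)=\sum_{t(\beta)=i}\beta\beta^{*}-\sum_{s(\alpha)=i}\alpha^{*}\alpha$. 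Evaluating on $e_{\sigma}$ is similar, except that matching the $e_{\Sigma i}$-component requires the key identity
\[
\partial_\sigma W=\sum_{\beta\in Q_1,\,t(\beta)=i}\beta\cdot\partial_{\sigma\beta}W,
\]
which I expect to be the main obstacle. This identity is a consequence of condition~(c3): for any cycle $c$ appearing in $W$, every occurrence of $\sigma$ inside $c$ must be immediately followed by some arrow $\beta$ with $t(\beta)=i$, since otherwise $\sigma$ would stand at a boundary of a cyclic representative of $c$ and $c$ would start and end at $i$, contradicting~(c3). This yields a bijection between decompositions $c=u\sigma v$ and decompositions $c=u(\sigma\beta)v'$ with $v=\beta v'$, and identifies $\partial_\sigma W$ term by term with $\sum_\beta\beta\cdot\partial_{\sigma\beta}W$.
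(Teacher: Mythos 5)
Your proposal is correct and follows essentially the same route as the paper's proof: evaluate both sides on the right-$\Gamma$-module generators of $T_j$, treat the cases $j\neq i$ and $j=i$ separately, discard the $[\alpha\beta]$-contributions because $f_{[\alpha\beta]^*}=0$, and match the remaining terms against $d(t_j)$. The only point where you add something the paper leaves implicit is the justification, via condition~(c3), of the identity $\partial_\sigma W=\sum_{\beta:\,t(\beta)=i}\beta\,\partial_{\sigma\beta}W$, which is indeed the step on which the term-by-term matching in the case $j=i$ hinges.
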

\begin{proof} We divide the proof into two cases.

\noindent Case $j\neq i$. We have
\begin{eqnarray*}
d_{\Lambda}(f_{t'_{j}})(e_j)&=&d_{T_{j}}(f_{t'_{j}}(e_j))-(-1)^{-2}f_{t'_{j}}(d_{T_{j}}(e_j))\\
&=&d_{T_{j}}(t_{j}),\\
\sum_{\rho'\in Q_1':t(\rho')=j}f_{\rho'}f_{\rho'^{*}}(e_j)
&=&(\sum_{\beta:j\rightarrow i\text{ in }
Q_1}f_{\beta^{\star}}f_{\beta^{\star
*}}+\sum_{\gamma:s(\gamma)\rightarrow j\text{ in
}Q_1,s(\gamma)\neq i}f_{\gamma}f_{\gamma^{*}})(e_j)\\
&=&\sum_{\beta:j\rightarrow i\text{ in } Q_1}(-\beta^{*}\beta)
+\sum_{\gamma:s(\gamma)\rightarrow j\text{ in }Q_1,s(\gamma)\neq
i}\gamma\gamma^{*}\\
(-\sum_{\rho'\in Q_1':s(\rho')=j}f_{\rho'^{*}}f_{\rho'})(e_j)
&=&(-\sum_{\alpha:i\rightarrow j\text{ in } Q_1}f_{\alpha^{\star
*}}f_{\alpha^{\star}}
-\sum_{\gamma:j\rightarrow t(\gamma)\neq i\text{
in } Q_1}f_{\gamma^{*}}f_{\gamma})(e_j)\\
&=&-\sum_{\alpha:i\rightarrow j\text{ in } Q_1}(-\alpha\alpha^{*})
-\sum_{\gamma:j\rightarrow t(\gamma)\neq i\text{
in } Q_1}\gamma^{*}\gamma \\
\text{the sum of the last two}&=&\sum_{\rho:s(\rho)\rightarrow
i\text{ in } Q_1}\rho\rho^{*}-\sum_{\rho:j\rightarrow t(\rho)\text{
in
}Q_1}\rho^{*}\rho\\
&=&d_{T_{j}}(t_{j}).\end{eqnarray*}
Therefore \[d_{\Lambda}(f_{t'_{j}})=\sum_{\rho'\in
Q_1':t(\rho')=j}f_{\rho'}f_{\rho'^{*}}-\sum_{\rho'\in
Q_1':s(\rho')=j}f_{\rho'^{*}}f_{\rho'}.\]

\noindent Case $j=i$. For an arrow $a$ of $Q$ starting at $i$, we have
\begin{eqnarray*}
d_{\Lambda}(f_{t'_{i}})(e_{a})&=&d_{T_{i}}(f_{t'_{i}}(e_{a}))-(-1)^{-2}f_{t'_{i}}(d_{T_{i}}(e_{a}))\\
&=&d_{T_{i}}(-e_{\Sigma i}a^{*})\\
&=&-\sum_{\alpha\in Q_1:s(\alpha)=i}e_{\alpha}\alpha a^{*}+e_{\Sigma i}\partial_{a}W,\\
d_{\Lambda}(f_{t'_{i}})(e_{\Sigma i})
&=&d_{T_{i}}(f_{t'_{i}}(e_{\Sigma i}))-(-1)^{-2}f_{t'_{i}}(d_{T_{i}}(e_{\Sigma i}))\\
&=&d_{T_{i}}(-e_{\Sigma i} t_{i})-f_{t'_{i}}(\sum_{\alpha\in Q_1:s(\alpha)=i}e_{\alpha}\alpha)\\
&=&-\sum_{\alpha\in Q_1:s(\alpha)=i}e_{\alpha}\alpha t_{i} + e_{\Sigma i} d_{\Gamma}(t_k)-\sum_{\alpha\in Q_1:s(\alpha)=i}(-e_{\Sigma i} \alpha^{*})\alpha\\
&=&-\sum_{\alpha\in Q_1:s(\alpha)=i}e_{\alpha}\alpha t_{i} +
\sum_{\beta\in Q_1:t(\beta)=i} e_{\Sigma i}\beta\beta^{*}.
\end{eqnarray*}

On the other hand, \begin{eqnarray*} \sum_{\alpha\in
Q_1:s(\alpha)=i}f_{\alpha^{\star}}f_{\alpha^{\star
*}}(e_{a})
&=&\sum_{\alpha\in Q_1:s(\alpha)=i}(-e_{\alpha}\alpha a^{*})\\
&=&-\sum_{\alpha\in Q_1:s(\alpha)=i}e_{\alpha}\alpha a^{*}\\
(-\sum_{\beta\in Q_1:t(\beta)=i}f_{\beta^{\star
*}}f_{\beta^{\star}})(e_{a})
&=&-\sum_{\beta\in Q_1:t(\beta)=i}(-e_{\Sigma i}\beta\partial_{a\beta}W)\\
&=&e_{\Sigma i}\partial_{a}W\\[5pt]
\sum_{\alpha\in Q_1:s(\alpha)=i}f_{\alpha^{\star}}f_{\alpha^{\star
*}}(e_{\Sigma i})
&=&\sum_{\alpha\in Q_1:s(\alpha)=i}(-e_{\alpha}\alpha t_{i})\\
&=&-\sum_{\alpha\in Q_1:s(\alpha)=i}e_{\alpha}\alpha t_{i}\\
(-\sum_{\beta\in Q_1:t(\beta)=i}f_{\beta^{\star
*}}f_{\beta^{\star}})(e_{\Sigma i}) &=&-\sum_{\beta\in Q_1:t(\beta)=i}(-e_{\Sigma i} \beta^{*})\\
&=&\sum_{\beta\in Q_1:t(\beta)=i}e_{\Sigma i} \beta\beta^{*}.
\end{eqnarray*}
Therefore
\begin{eqnarray*}d_{\Lambda}(f_{t'_{i}})&=&\sum_{\alpha\in Q_1:s(\alpha)=i}f_{\alpha^{\star}}f_{\alpha^{\star
*}}-\sum_{\beta\in Q_1:t(\beta)=i}f_{\beta^{\star
*}}f_{\beta^{\star}}\\
&=&\sum_{\rho'\in
Q_1':t(\rho')=i}f_{\rho'}f_{\rho'^{*}}-\sum_{\rho'\in
Q_1':s(\rho')=i}f_{\rho'^{*}}f_{\rho'}.
\end{eqnarray*}

\end{proof}

\subsection{Proof of the equivalence}

In the preceding subsection we constructed a dg $\Gamma'$-$\Gamma$-bimodule $T$. Clearly $T$ is cofibrant as a right dg $\Gamma$-module. Consequently we obtain a pair of adjoint triangle functors $F=?\lten_{\Gamma'}T$ and  $G=\mathcal{H}om_{\Gamma}(T,?)$ between the derived categories $\cd(\Gamma')$ and $\cd(\Gamma)$. In this subsection we will prove that they are quasi-inverse equivalences.

We will denote by $S_j$ and $S'_j$ respectively the simple modules
over $\Gamma$ and $\Gamma'$ attached to the vertex $j$ of $Q$. As
shown in the proof of Theorem~\ref{thm:findim-compact} a), they
respectively generate the triangulated categories $\cd_{fd}(\Gamma)$
and $\cd_{fd}(\Gamma')$.

\begin{lemma}\label{L:imageofsimples}
\begin{itemize}
 \item[a)] Let $j$ be a vertex of $Q$. If $j=i$, we have an isomorphism in $\mathcal{D}(\Gamma)$
\[F(S'_i)\cong\Sigma S_i.\]
If $j\neq i$, then $FS'_j$ is isomorphic in $\mathcal{D}(\Gamma)$ to the cone of the canonical map
\[\Sigma^{-1}S_j\longrightarrow\Hom_{\mathcal{D}(\Gamma)}(\Sigma^{-1}S_j,S_i)\ten_k S_i.\]
\item[b)]
For any vertex $j$ of $Q$, we have an isomorphism in $\mathcal{D}(\Gamma')$
\[GF(S'_j)\cong S'_j.\]
\end{itemize}
\end{lemma}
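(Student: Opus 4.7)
My plan is to prove both parts by direct computation with the cofibrant resolutions of the simples $S_j'$ over $\Gamma'$ built in Section~\ref{ss:cofibrant-resolutions}. Since $T$ is cofibrant as a right dg $\Gamma$-module, $F(S_j')$ is represented by $\mathbf{p}S_j'\otimes_{\Gamma'}T$, and under this tensor product each $P_k'$ becomes $T_k$ while each generator $\rho'$ of $\widetilde{Q'}$ acts via the explicit morphism $f_{\rho'}$ from Section~\ref{ss:bimodule}.

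\textbf{Part (a), case $j=i$.} The cofibrant resolution is the mapping cone $\mathrm{Cone}(\ker(\pi_i')\hookrightarrow P_i')$, where $\ker(\pi_i')$ has as graded summands the $\rho'P_{s(\rho')}'$ indexed by the arrows of $\widetilde{Q'}$ ending at $i$: the $\alpha^\star$ of degree $0$ (one per arrow $\alpha:i\to t(\alpha)$ of $Q$), the $\beta^{\star*}$ of degree $-1$ (one per arrow $\beta:s(\beta)\to i$ of $Q$), and the loop $t_i'$ of degree $-2$. After tensoring with $T$, the degree-$0$ pieces, namely the maps $f_{\alpha^\star}:T_{t(\alpha)}\to T_i$, collectively realize the inclusion $\bigoplus_\alpha P_\alpha\hookrightarrow T_i$, and hence cancel precisely that direct summand of $T_i$ inside the cone. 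The residual complex is supported on the remaining direct summand $P_{\Sigma i}=\Sigma P_i$ of $T_i$, together with contributions of $f_{\beta^{\star*}}$ (landing in $P_{\Sigma i}$ by left multiplication with $\beta$) and of $f_{t_i'}$ (inducing left multiplication with $-t_i$ on $P_{\Sigma i}$ after the cancellation). Direct inspection identifies this residual complex, up to an overall shift by $\Sigma$, with the cofibrant resolution of $S_i$ over $\Gamma$ at vertex $i$ described in Section~\ref{ss:cofibrant-resolutions}. Hence $F(S_i')\simeq\Sigma S_i$ in $\cd(\Gamma)$.

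\textbf{Part (a), case $j\neq i$.} The arrows of $\widetilde{Q'}$ ending at $j$ include: (i) original arrows of $Q$ not incident to $i$, their duals, and the loop $t_j'$; (ii) the mutated arrows $[\alpha\beta]$ with $t(\alpha)=j$ and $\beta^\star$ with $s(\beta)=j$ (from arrows $\beta:j\to i$ of $Q$); (iii) their duals, notably the $\alpha^{\star*}$ of degree $-1$ (from arrows $\alpha:i\to j$ of $Q$). Tensoring and using the explicit formulas $f_{[\alpha\beta]}(a)=\alpha\beta a$, $f_{\beta^\star}(e_{\Sigma i}a_i+\sum_\rho e_\rho a_\rho)=-\beta^*a_i-\sum_\rho(\partial_{\rho\beta}W)a_\rho$ and $f_{\alpha^{\star*}}(e_{\Sigma i}a_i+\sum_\rho e_\rho a_\rho)=-\alpha t_i a_i-\sum_\rho\alpha\rho^*a_\rho$, the pieces coming from arrows not incident to $i$ reassemble into the cofibrant resolution of $S_j$ over $\Gamma$, while those involving $T_i$ collapse (using the cone structure of $T_i$) into copies of $S_i$ indexed by the arrows $\alpha:i\to j$ of $Q$. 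The upshot exhibits $F(S_j')$ as a cone on a map $\Sigma^{-1}S_j\to V\otimes_k S_i$ with $V=\Hom_{\cd(\Gamma)}(\Sigma^{-1}S_j,S_i)$ having as basis the arrows $\alpha:i\to j$ of $Q$ (by the dimension formula of Section~\ref{ss:cofibrant-resolutions}). The map is canonical: it corresponds to $\mathrm{id}_V$ under $V\otimes_k V^*\cong\End(V)$.

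\textbf{Part (b).} We apply $G=\cHom_\Gamma(T,?)$ to the descriptions from part (a). For $j=i$, a direct componentwise computation, using the absence of loops in $Q$, gives $G(\Sigma S_i)_k=0$ for $k\neq i$ and $G(\Sigma S_i)_i\cong k$ in degree $0$ with trivial $\Gamma'$-action, hence $G(\Sigma S_i)\cong S_i'$. For $j\neq i$, applying $G$ to the triangle in (a) and computing $G(S_j)$ and $G(S_i)$ componentwise gives a triangle in $\cd(\Gamma')$ whose third vertex is $S_j'$ (the contributions from $G(S_i)$ and $G(S_j)$ partially cancel against each other, leaving the desired simple).

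\textbf{Main obstacle.} The chief difficulty is in case $j\neq i$ of part (a): explicitly matching the cone produced by the tensor product against the \emph{canonical} cone $\mathrm{Cone}(\Sigma^{-1}S_j\to V\otimes_k S_i)$, in particular verifying that the connecting map is the canonical one. The requisite bookkeeping involves several cancellations between the $W_1'$ and $W_2'$ contributions to $W'$ and the derivatives of $W$, parallel in flavour to the consistency checks carried out in Lemmas~\ref{L:dalpha}--\ref{L:dr}.
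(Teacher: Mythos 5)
Your proposal follows the paper's own route: represent $F(S'_j)$ by $\mathbf{p}S'_j\ten_{\Gamma'}T$ using the explicit four-term resolution, identify the result as $\Sigma S_i$ for $j=i$ and as the universal extension of $S_j$ by copies of $S_i$ for $j\neq i$, and then apply $G=\cHom_\Gamma(T,?)$ to the resulting small model. The outline and all the identifications you assert are correct, and your computation of the $j=i$ case and of $G(\Sigma S_i)\cong S'_i$ matches the paper's.

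The one point where your sketch stops short is exactly where the paper's proof does its real work, and it is worth naming precisely. For $j\neq i$ the map $f\colon M\to T_j$ is \emph{not} injective, and the assertion that the $T_i$-contributions ``collapse'' is really the claim that $\ker(f)$ is contractible. The paper proves this by exhibiting, for each pair of arrows $a\colon i\to t(a)$ and $b\colon j\to i$ of $Q$, a dg submodule $R_{a,b}\subset M$ generated by an element $r_{a,b}$ whose components are built from the second cyclic derivatives $\del_{ab\gamma}W$ and $\del_{ab\,a'b'}W$, and then checking that the differential of $\ker(f)$ carries the generator $e^{[ab]}_{t(ab)}$ of $\Sigma T^{[ab]}_{t(ab)}$ exactly to $r_{a,b}$, so that $\ker(f)$ splits into contractible pieces $\Sigma T^{[ab]}_{t(ab)}\oplus R_{a,b}$. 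Once this is done, $FS'_j\simeq\cok(f)=k\{\bar e_j,\bar a\mid a\colon i\to j\}$, and the identification with the canonical cone is immediate since $\dim\Hom_{\cd(\Gamma)}(\Sigma^{-1}S_j,S_i)$ equals the number of arrows $i\to j$. For part (b) with $j\neq i$, note also that the paper computes $G(\cok(f))$ directly as an explicit two-term complex $k\{g_j,g_a\}\to k\{h_a\}$ quasi-isomorphic to $S'_j$; this is cleaner than your route through the triangle $G(\bigoplus S_i)\to GFS'_j\to G(S_j)$, because $G(S_j)$ itself has an extra summand supported at vertex $i$ and one would still have to identify the connecting map to see the cancellation you invoke.
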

\begin{proof} As shown in Section~\ref{ss:cofibrant-resolutions}, a cofibrant resolution of $S'_j$ is given by the graded vector space
\[\mathbf{p}S'_j=\Sigma^3 P'_j\oplus\bigoplus_{\rho\in Q'_1:s(\rho)=j}\Sigma^2 P'_{t(\rho)}\oplus\bigoplus_{\tau\in Q'_1:t(\tau)=j}\Sigma P'_{s(\tau)}\oplus P'_j,\]
and the differential
\[d_{\mathbf{p}S'_j}=\left(\begin{array}{cccc}
              d_{\Sigma^3 P'_j} & 0 & 0 & 0\\
              \rho & d_{\Sigma^2 P'_{t(\rho)}} & 0 & 0\\
              -\tau^* & -\del_{\rho\tau}W' & d_{\Sigma P'_{s(\tau)}} & 0\\
              t'_j & \rho^* & \tau & d_{P'_j}
             \end{array}\right)
\]
where the obvious summation symbols over $\rho$, $\tau$ and $j$ are omitted.
Therefore
\[F(S'_j)={}\mathbf{p}S'_j\ten_{\Gamma'}T\]
is the dg $\Gamma$-module whose underlying graded vector space is
\[\Sigma^3 T_j\oplus\bigoplus_{\rho\in Q'_1:s(\rho)=j}\Sigma^2 T_{t(\rho)}^{\rho}\oplus\bigoplus_{\tau\in Q'_1:t(\tau)=j}\Sigma T_{s(\tau)}^{\tau}\oplus T_j,\]
where $T_{t(\rho)}^{\rho}$ (repectively, $T_{s(\tau)}^{\tau}$) is the direct summands corresponding to $\rho$ (respectively, $\tau$),
and whose differential is
\[d_{F(S'_j)}=\left(\begin{array}{cccc}
              d_{\Sigma^3 T_j} & 0 & 0 & 0\\
              f_{\rho} & d_{\Sigma^2 T_{t(\rho)}^{\rho}} & 0 & 0\\
              -f_{\tau^*} & -f_{\del_{\rho\tau}W'} & d_{\Sigma T_{s(\tau)}^{\tau}} & 0\\
              f_{t'_j} & f_{\rho^*} & f_{\tau} & d_{T_j}
             \end{array}\right).
\]
Then $F(S'_j)$ is the mapping cone of the morphism
\[\xymatrix{M\ar[rr]^{f=(f_{t'_i},f_{\rho^*},f_{\tau})} && T_j},\]
where $M$ is he dg $\Gamma$-module whose underlying graded vector space is
\[\Sigma^2 T_j\oplus\bigoplus_{\rho\in Q'_1:s(\rho)=j}\Sigma T_{t(\rho)}^{\rho}\oplus\bigoplus_{\tau\in Q'_1:t(\tau)=j}T_{s(\tau)}^{\tau}\]
and whose differential is
\[d_M=\left(\begin{array}{ccc}
              d_{\Sigma^2 T_j} & 0 & 0\\
              -f_{\rho} & d_{\Sigma T_{t(\rho)}^{\rho}} & 0\\
              f_{\tau^*} & f_{\del_{\rho\tau}W'} & d_{T_{s(\tau)}^{\tau}}\\
             \end{array}\right).
\]

We will simplify the description of $FS'_j$ by showing that the
kernel $\ker(f)$ of $f:M\rightarrow T_j$ is contractible and
computing its cokernel. Then we will apply the functor $G$ to the
cokernel. We distinguish two cases: $j=i$ and $j\neq i$.

\noindent Case $j=i$.  Let us compute the effect of $f$ on the
generators of the three families of summands of $M$. Recall that
$T_i$ is generated by $e_{\Sigma i}$ and $e_{\alpha}$,
$\alpha:i\rightarrow t(\alpha)$ in $Q$, and $T_j$, $j\neq i$, is
generated by $e_j$.

(1) The morphism $f_{t'_i}:\Sigma^2 T_i\rightarrow T_i$ sends
$e_{\Sigma i}$ to $-e_{\Sigma i}t_i$, and $e_{\alpha}$
($\alpha:i\rightarrow t(\alpha)$ in $Q$) to $-e_{\Sigma i}\alpha^*$.

(2) For $\rho:i\rightarrow t(\rho)$: in this case $\rho=b^{\star}$
for some $b:s(b)\rightarrow i$ in $Q$. The morphism $f_{b^{\star
*}}:\Sigma T_{s(b)}^{b^{\star}}\rightarrow T_i$ sends
$e_{s(b)}^{b^{\star}}$ to $e_{\Sigma i}b$.

(3) For $\tau:s(\tau)\rightarrow i$: in this case $\tau=a^{\star}$
for some $a:i\rightarrow t(a)$ in $Q$. The morphism
$f_{a^{\star}}:T_{t(a)}^{a^{\star}}\rightarrow T_i$ sends
$e_{t(a)}^{a^{\star}}$ to $e_{a}$.

This description shows that the morphism $f$ is injective. Moreover, the image $\im(f)$ is the dg $\Gamma$-module generated by $e_{\Sigma i}t_i$, $e_{\Sigma i}\alpha^*$, $e_{\Sigma i}b$, and $e_{\alpha}$ (where $\alpha:i\rightarrow t(\alpha)$ and $b:s(b)\rightarrow i$ are arrows in $Q$). Thus the cokernel of $f$ is $\Sigma S_i$. So \[F(S'_i)\cong\Sigma S_i,\] and
\[GF(S'_i)\cong G(\Sigma S_i)=\mathcal{H}om_{\Gamma}(T,\Sigma S_i)=\mathcal{H}om_{\Gamma}(P_{\Sigma i},\Sigma S_i)=S'_i.\]

\noindent Case $j\neq i$.

(1) The morphism $f_{t'_j}: \Sigma^2 T_j\rightarrow T_j$ sends $e_j$ to $t_j$.

(2) For $\rho:j\rightarrow t(\rho)$, we have $\rho=\gamma$ for some $\gamma:j\rightarrow t(\gamma)\neq i$ in $Q$, or $\rho=[ab]$ for some $a:i\rightarrow t(a)$ and $b:j\rightarrow i$ in $Q$, or $\rho=a^{\star}$ for some $a:i\rightarrow j$ in $Q$. In the first case, the morphism $f_{\gamma^*}:\Sigma T_{t(\gamma)}^{\gamma}\rightarrow T_j$ sends $e_{t(\gamma)}^{\gamma}$ to $\gamma^*$. In the second case, the morphism $f_{[ab]^*}:\Sigma T_{t(ab)}^{[ab]}\rightarrow T_i$ is zero.
In the third case, the morphism $f_{a^{\star *}}:\Sigma T_i^{a^{\star}}\rightarrow T_i$ sends $e_{\alpha}^{a^{\star}}$ to $-a\alpha^*$ and sends $e_{\Sigma i}^{a^{\star}}$ to $-at_i$.

(3) For $\tau:s(\tau)\rightarrow j$, we have $\tau=\gamma$ for some $\gamma:i\neq s(\gamma)\rightarrow j$ in $Q$, or $\tau=[ab]$ for some $a:i\rightarrow j$ and $b:s(b)\rightarrow i$ in $Q$, or $\tau=b^{\star}$ for some $b:j\rightarrow i$ in $Q$. In the first case, the morphism $f_{\gamma}:T_{s(\gamma)}^{\gamma}\rightarrow T_j$ sends $e_{s(\gamma)}^{\gamma}$ to $\gamma$. In the second case, the morphism $f_{[ab]}:T_{s(ab)}^{[ab]}\rightarrow T_j$ sends $e_{s(ab)}^{[ab]}$ to $ab$. In the third case, the morphism $f_{b^{\star}}:T_i^{b^{\star}}\rightarrow T_j$ sends $e_{\alpha}^{b^{\star}}$ to $-\del_{\alpha b}W$ and sends $e_{\Sigma i}^{b^{\star}}$ to $-b^*$.

By (2), the kernel $\ker(f)$ of $f$ contains all $\Sigma T_{t(ab)}^{[ab]}$.
For a pair of arrows $a:i\rightarrow t(a)$ and $b:j\rightarrow i$ in $Q$, let $R_{a,b}$ denote the graded $\Gamma$-submodule of $M$ generated by
\[r_{a,b}=((e_{s(\gamma)}^{\gamma}\del_{ab\gamma}W)_{\gamma},(e_{s(a'b')}^{[a'b']}\del_{ab a'b'}W)_{a',b'},e_{a}^{b^{\star}})\]
where $\gamma$ runs over all $\gamma:i\neq s(\gamma)\rightarrow j$ in $Q$ and $a'$ and $b'$ run over all $a':i\rightarrow j$ and $b':s(b')\rightarrow i$ in $Q$. The degree of $r_{a,b}$ is zero, so $R_{a,b}$ is indeed a dg $\Gamma$-submodule of $M$.
It is contained in $\ker(f)$ since
\begin{eqnarray*}f(r_{a,b})&=&\sum_{\gamma}f_{\gamma}(e_{s(\gamma)}^{\gamma}\del_{ab\gamma}W)+\sum_{a',b'}f_{[a'b']}(e_{s(a'b')}^{[a'b']}\del_{aba'b'}W)+f_{b^{\star}}(e_a^{b^{\star}})\\
 &=&\sum_{\gamma}\gamma\del_{a b\gamma}W+\sum_{a',b'}a'b'\del_{a ba'b'}W-\del_{a b}W\\
&=&0.
\end{eqnarray*}
Moreover, it is straightforward to show that the sum of all $R_{a,b}$ is a direct sum and that the underlying graded space of $\ker(f)$ is
\[\bigoplus_{a,b}\Sigma T_{t(ab)}^{[ab]} \oplus\bigoplus_{a,b} R_{a,b},\]
and its differential is
\[d_{\ker(f)}=\left(\begin{array}{cc} d_{\Sigma T_{t(ab)}^{[ab]}} & 0\\
(f_{\del_{[ab]\tau}W'})_{\tau} & d_{R_{a,b}}\end{array}\right),\]
where $\tau$ runs over all arrows $\tau:s(\tau)\rightarrow j$ of
$Q'$. If $\tau=\gamma$ for some $\gamma:i\neq s(\gamma)\rightarrow
j$ in $Q$, then
\[f_{\del_{[ab]\gamma}W'}(e_{t(ab)}^{[ab]})=(e_{s(\gamma)}^{\gamma}\del_{ab
\gamma}W)_{\gamma}.\] If $\tau=[a'b']$ for some $a':i\rightarrow j$
and $b':s(b')\rightarrow i$ in $Q$, then
\[f_{\del_{[ab][a'b']}W'}(e_{t(ab)}^{[ab]})=e_{s(a'b')}^{[a'b']}\del_{ab a'b'}W.\]
If $\tau=b'^{\star}$ for some $b':j\rightarrow i$ in $Q$, then
\[f_{\del_{[ab]b'^{\star}}W'}(e_{t(ab)}^{[ab]})=\delta_{b,b'}e_{a}^{b^{\star}}.\]
Summing them up, we see that the differential of $\ker(f)$ sends $e_{t(ab)}^{[ab]}$ to $r_{a,b}$, and this induces an isomorphism of degree $1$ from $\Sigma T_{t(ab)}^{[ab]}$ to $R_{a,b}$.
In particular, $\ker(f)$ is contractible.

Now $\im(f)$ is generated by $t_j$, $\gamma^*$ ($\gamma:j\rightarrow
t(\gamma)\neq i$), $a\alpha^*$ ($a:i\rightarrow j$ and
$\alpha:i\rightarrow t(\alpha)$ in $Q$), $at_i$ ($a:i\rightarrow j$
in $Q$), $\gamma$ ($\gamma:i\neq s(\gamma)\rightarrow j$ in $Q$),
and $ab$ ($a:i\rightarrow j$ and $b:s(b)\rightarrow i$ in $Q$),
$b^*$ ($b:j\rightarrow i$ in $Q$). Therefore $\cok(f)$ is the vector
space $k\{\bar{e}_j,\bar{a}|a:i\rightarrow j\text{ in } Q\}$
concentrated in degree~$0$ with the obvious $\Gamma$-action. Thus we
have that
\[FS'_j\cong\cok(f)\]
is the universal extension of $S_j$ by $S_i$, or in other words, it is isomorphic to the cone of the canonical map
\[\Sigma^{-1}S_j\longrightarrow\Hom_{\mathcal{D}(\Gamma)}(\Sigma^{-1}S_j,S_i)\ten_k S_i.\]
Further,
\begin{eqnarray*}GF(S'_j)&\cong& G(\cok(f))\\
 &=&\mathcal{H}om_{\Gamma}(T,\cok(f)),\end{eqnarray*}
and so the complex underlying $GFS'_j$ is
\[ \xymatrix{0\ar[r] & k\{g_j, g_a|a:i\rightarrow j\text{ in } Q\}\ar[r]^d &k\{h_{a}|a:i\rightarrow j\text{ in } Q\}\ar[r] & 0,}\]
where $g_j:P_j\rightarrow \cok(f)$ is the canonical projection,
$g_a:P_a\rightarrow\cok(f)$ is a copy of $g_j$, and $h_{a}:\Sigma
P_i\rightarrow \cok(f)$ is of degree $1$ sending $e_{\Sigma i}$ to
$\bar{a}$. The differential $d$ sends $g_a$ to $h_a$. Thus the above
dg $\Gamma'$-module is quasi-isomorphic to $k\{g_j\}$ concentrated
in degree~$0$. From the left $\Gamma'$-action on $T$ we deduce that
$g_je_j=g_j$. Therefore $GF(S'_j)$ is isomorphic to $S'_j$.
\end{proof}

\begin{proposition}\label{P:equivfd}
 The functors $F$ and $G$ induce a pair of quasi-inverse triangle equivalences
\[\xymatrix{\mathcal{D}_{fd}(\Gamma)\ar[r]<.7ex>^G &\mathcal{D}_{fd}(\Gamma')\ar[l]<.7ex>^F}\]
\end{proposition}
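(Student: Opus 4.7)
The plan is to show that $F$ restricts to an equivalence $\cd_{fd}(\Gamma') \to \cd_{fd}(\Gamma)$ with quasi-inverse supplied by $G$. This reduces to three claims: (i) $F$ sends $\cd_{fd}(\Gamma')$ into $\cd_{fd}(\Gamma)$; (ii) the unit $\eta: \id \to GF$ is an isomorphism on every object of $\cd_{fd}(\Gamma')$; and (iii) the essential image of $F|_{\cd_{fd}(\Gamma')}$ is all of $\cd_{fd}(\Gamma)$.

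For (i), recall from the proof of Theorem~\ref{thm:findim-compact}~a) that $\cd_{fd}(\Gamma')$ is the triangulated subcategory of $\cd(\Gamma')$ generated by the simples $S'_j$, $j\in Q_0$. Lemma~\ref{L:imageofsimples}~a) identifies $F(S'_j)$ as a finite-dimensional dg $\Gamma$-module in every case. Since $F$ is triangulated and $\cd_{fd}(\Gamma)$ is closed under shifts and cones in $\cd(\Gamma)$, claim (i) follows.

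For (ii), the full subcategory of $\cd(\Gamma')$ on which $\eta$ is an isomorphism is triangulated and closed under shifts, so it suffices to verify invertibility of $\eta_{S'_j}$. From Lemma~\ref{L:imageofsimples}~b) we know $GF(S'_j) \cong S'_j$; since $S'_j$ is simple over the semilocal algebra $H^0(\Gamma') = J(Q',W')$, we have $\Hom_{\cd(\Gamma')}(S'_j, GF(S'_j)) \cong k$, and it is enough to check $\eta_{S'_j} \neq 0$. If $\eta_{S'_j}$ vanished, the triangle identity $\varepsilon_{F(S'_j)} \circ F(\eta_{S'_j}) = \id_{F(S'_j)}$ would force $F(S'_j) = 0$, contradicting Lemma~\ref{L:imageofsimples}~a). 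Hence $\eta_{S'_j}$ is a non-zero scalar multiple of the identity under the identification $GF(S'_j) \cong S'_j$, hence invertible, and by the closure properties just mentioned $\eta$ is invertible on all of $\cd_{fd}(\Gamma')$. The unit criterion for an adjoint pair then gives that $F|_{\cd_{fd}(\Gamma')}$ is fully faithful.

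For (iii), the essential image of $F|_{\cd_{fd}(\Gamma')}$ is a triangulated subcategory of $\cd_{fd}(\Gamma)$ stable under shifts. It contains $S_i \cong \Sigma^{-1} F(S'_i)$ by Lemma~\ref{L:imageofsimples}~a), and for $j \neq i$ the triangle
\[
\Sigma^{-1} S_j \to \Hom_{\cd(\Gamma)}(\Sigma^{-1} S_j, S_i) \otimes_k S_i \to F(S'_j) \to S_j
\]
of the same lemma places $S_j$ in the triangulated subcategory generated by $F(S'_j)$ and $S_i$, hence in the essential image. Since the $S_j$, $j\in Q_0$, generate $\cd_{fd}(\Gamma)$, claim (iii) follows. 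Combining (i)--(iii) yields that $F|_{\cd_{fd}(\Gamma')}$ is an equivalence, and $G$ is then automatically a quasi-inverse (in particular it restricts to $\cd_{fd}(\Gamma) \to \cd_{fd}(\Gamma')$). The most delicate step is (ii), where one must relate the abstract isomorphism of Lemma~\ref{L:imageofsimples}~b) to the unit map; rather than tracking this through the explicit cofibrant resolutions of Section~\ref{ss:bimodule}, we sidestep the issue using $\End_{\cd(\Gamma')}(S'_j) = k$ together with the triangle identity and the non-vanishing of $F(S'_j)$.
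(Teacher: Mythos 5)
Your proposal is correct and follows essentially the same route as the paper's own proof: both reduce to showing the unit $\eta_{S'_j}$ is non-zero (your triangle-identity argument is just the adjunction bijection $\Hom(FS'_j,FS'_j)\cong\Hom(S'_j,GFS'_j)$ sending $\id_{FS'_j}$ to $\eta_{S'_j}$, which is how the paper phrases it), then use the one-dimensionality of $\Hom(S'_j,S'_j)$ together with Lemma~\ref{L:imageofsimples} to get invertibility, propagate over triangles, and deduce essential surjectivity from the fact that the $FS'_j$ generate the simples $S_j$. No gaps.
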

\begin{proof}
 Let $\eta:Id\rightarrow GF$ be the unit of the adjoint pair $(F,G)$. Let $j$ be a vertex of $Q$. We would like to show that
\[\eta_{S'_j}:S'_j\rightarrow GFS'_j\]
is invertible. Under the adjunction
\[\Hom_{\mathcal{D}_{fd}(\Gamma)}(FS'_j,FS'_j)\stackrel{\sim}{\longrightarrow}\Hom_{\mathcal{D}_{fd}(\Gamma')}(S'_j,GFS'_j),\]
the morphism $\eta_{S'_j}$ corresponds to the identity of $FS'_j$. So $\eta_{S'_j}$ is nonzero. Since $S'_j$ is isomorphic to $GFS'_j$ (Lemma~\ref{L:imageofsimples}), and $\Hom_{\mathcal{D}_{fd}(\Gamma')}(S'_j,S'_j)$ is one dimensional, it follows that $\eta_{S'_j}$
is invertible. Therefore, so are its shifts $\eta_{\Sigma^p S'_j}=\Sigma^p \eta_{S'_j}$ ($p\in\mathbb{Z}$). Let
\[\xymatrix{X_1\ar[r]^f & X\ar[r]^g & X_2\ar[r]^h& \Sigma X_1}\]
be a triangle in $\mathcal{D}_{fd}(\Gamma')$. Then we obtain a commutative diagram
\[\xymatrix{X_1\ar[r]^f\ar[d]^{\eta_{X_1}} & X\ar[r]^g\ar[d]^{\eta_{X}} & X_2\ar[r]^h\ar[d]^{\eta_{X_2}} &\Sigma X_1 \ar[d]^{\Sigma\eta_{X_1}}\\
GFX_1\ar[r]^{GFf} & GFX\ar[r]^{GFg} & GFX_2\ar[r]^{GFh} &\Sigma
GFX_1.}\] Thus if $\eta_{X_1}$ and $\eta_{X_2}$ are isomorphisms,
then so is $\eta_X$.  Since $\mathcal{D}_{fd}(\Gamma')$ is generated
by all the $S'_j$'s, we deduce that for all objects $X$ of
$\mathcal{D}_{fd}(\Gamma')$, the morphism $\eta_X$ is an
isomorphism. It follows that $F$ is fully faithful. Moreover, by
Lemma~\ref{L:imageofsimples} a), the objects $FS'_j$, $j\in Q_0$,
generate the triangulated category $\mathcal{D}_{fd}(\Gamma)$, since
they generate all the simples $S_j$. Therefore $F$ is an
equivalence, and it follows that $G$ is an equivalence as well.

\end{proof}

\begin{proof}[Proof of Theorem~\ref{T:mainthm}]
  As in section~\ref{ss:finite-dimensional-dg-modules}, let
  $\cd_0(\Gamma)$ denote the localizing subcategory of $\cd(\Gamma)$
  generated by $\cd_{fd}(\Gamma)$.  Now observe that the functors $F$
  and $G$ commute with arbitrary coproducts. Since they induce
  equivalences between $\cd_{fd}(\Gamma)$ and $\cd_{fd}(\Gamma')$ and
  these subcategories are formed by compact objects, the functors $F$
  and $G$ also induce equivalences between $\cd_0(\Gamma)$ and
  $\cd_0(\Gamma')$. Now let us check that $F$ is compatible with the
  Nakayama functor $\nu$ defined in section~\ref{ss:Nakayama-functor}.
  For $P'$ in $\per(\Gamma')$ and $M$ in $\cd_0(\Gamma)$, the object
  $FP'$ is perfect in $\cd(\Gamma)$ and we have
\[
\Hom(M, \nu FP') = D\Hom(FP', M) = D\Hom(P', GM) = \Hom(GM, \nu P').
\]
Since $F$ and $G$ are quasi-inverse to each other on the subcategories
$\cd_0$, we also have
\[
\Hom(GM, \nu P') = \Hom(M, F\nu P').
\]
We obtain that when restricted to
$\per(\Gamma')$, the functors $F\nu$ and $\nu F$ are isomorphic. Since
$F\nu$ is fully faithful on $\per(\Gamma')$ and $\nu$ is fully faithful on $\per(\Gamma)$,
it follows that $F$ is fully faithful on $\per(\Gamma')$. Since $F\Gamma' =T$ generates
$\per(\Gamma)$ it follows that $F$ induces an equivalence from $\per(\Gamma)$
to $\per(\Gamma')$. Since $F$ commutes with arbitrary coproducts, $F$ itself
is an equivalence.
\end{proof}

\section{Nearly Morita equivalence for neighbouring Jacobian algebras}
\label{s:nearly-Morita-equivalence} Let $(Q,W)$ be a quiver with
potential, and $\Gamma$ the associated complete Ginzburg dg algebra.
For a vertex $j$ of $Q$, let $P_j=e_j\Gamma$ and let $S_j$ be the
corresponding simple dg $\Gamma$-module concentrated in degree~$0$.

Recall that the finite dimensional derived category
$\cd_{fd}(\Gamma)$ is generated by the $S_j$'s and all of them
belong to the perfect derived category $\per(\Gamma)$ (\confer
subsections~\ref{ss:cofibrant-resolutions} and
\ref{ss:finite-dimensional-dg-modules}). It follows that
$\cd_{fd}(\Gamma)$ is a triangulated subcategory of $\per(\Gamma)$.
We call the idempotent completion $\cc_{Q,W}$ of the
triangle quotient category
\[
\per(\Gamma)/\cd_{fd}(\Gamma)
\] 
\emph{the generalized cluster category associated with $(Q,W)$}. This
category was introduced by C.~Amiot in the case when the Jacobian
algebra $J(Q,W)=H^0\Gamma$ is finite dimensional, \confer
\cite{Amiot08a}.  Let $\pi$ denote the projection functor from
$\per(\Gamma)$ to $\cc_{Q,W}$.

\begin{remark} For a compactly generated triangulated category
$\ct$, let us denote by $\ct^c$ the subcategory of compact objects,
\ie the objects $C$ such that the functor $\Hom(C,?)$ commutes
with arbitrary coprducts, 
\confer \cite{Neeman99}.
Recall that $\cd_0(\Gamma)$ denotes the localizing subcategory
of $\cd(\Gamma)$ generated by $\cd_{fd}(\Gamma)$. We have
$(\cd \Gamma)^c=\per(\Gamma)$ and $(\cd_0 \Gamma)^c=\cd_{fd}(\Gamma)$.
Thus, by a theorem of Neeman \cite{Neeman92a}, we have an
equivalence
\[
\cc_{Q,W} \iso (\cd(\Gamma)/\cd_0(\Gamma))^c.
\]
This shows that the quotient $\cd(\Gamma)/\cd_0(\Gamma)$ is
the `unbounded version' of the cluster category.
\end{remark}

Let $\fpr(\Gamma)$ be the full subcategory of
$\cc_{Q,W}$ consisting of cones of morphisms of
$\add(\pi(\Gamma))$. If the Jacobian algebra is finite dimensional,
this subcategory equals $\cc_{Q,W}$, \confer Proposition 2.9 and
Lemma 2.10 of \cite{Amiot08a}. The following proposition generalizes
\cite[Proposition 2.9]{Amiot08a}.

\begin{proposition}\cite{Plamondon09}\label{P:fundamental-domain-of-cluster-category}
Let $\cf$ be the full subcategory of $\per(\Gamma)$ consisting of
cones of morphisms of $\add(\Gamma)$. Then the projection functor
$\pi:\per(\Gamma)\rightarrow \cc_{Q,W}$ induces a $k$-linear
equivalence between $\cf$ and $\fpr(\Gamma)$.
\end{proposition}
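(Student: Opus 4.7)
The plan is to verify that $\pi|_\cf$ is essentially surjective and fully faithful. Essential surjectivity is almost formal: given $Y \in \fpr(\Gamma)$ realised as the cone in $\cc_{Q,W}$ of some $\bar f : \pi P_1 \to \pi P_0$ with $P_i \in \add(\Gamma)$, one lifts $\bar f$ to a morphism $f : P_1 \to P_0$ in $\per(\Gamma)$ and takes $\mathrm{Cone}(f) \in \cf$; then $\pi\mathrm{Cone}(f) \cong Y$ in $\cc_{Q,W}$. The liftability of $\bar f$ is a special case of full faithfulness (applied to objects of $\add(\Gamma)$), so I would treat the two properties in parallel.

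Full faithfulness is the heart of the matter. Given $X, Y \in \cf$, a morphism $\pi X \to \pi Y$ in the Verdier quotient $\per(\Gamma)/\cd_{fd}(\Gamma)$ is represented by a left fraction $X \xleftarrow{s} X' \xrightarrow{g} Y$ with $N := \mathrm{Cone}(s) \in \cd_{fd}(\Gamma)$. The canonical map
\[
 \Hom_{\per\Gamma}(X, Y) \longrightarrow \Hom_{\cc_{Q,W}}(\pi X, \pi Y)
\]
is bijective provided that for every such fraction, $g$ extends along $s$ to a genuine morphism $X \to Y$, and that any direct morphism $X \to Y$ becoming zero in the quotient already vanishes in $\per(\Gamma)$. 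Writing $Y$ via its defining triangle $Q_1 \to Q_0 \to Y \to \Sigma Q_1$ with $Q_j \in \add(\Gamma)$, and analogously for $X$, both statements reduce by a diagram chase to the vanishing of appropriate $\Hom_{\per\Gamma}(N[i], Q_j)$ and $\Hom_{\per\Gamma}(P_i, N[j])$ for $N \in \cd_{fd}(\Gamma)$ and small $i, j$.

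To establish these vanishings I would use the Nakayama-functor duality of section~\ref{ss:Nakayama-functor},
\[
 D\Hom_{\per\Gamma}(P, L) \cong \Hom_{\per\Gamma}(L, \nu P),
\]
combined with the bimodule $3$-Calabi-Yau property of the complete Ginzburg dg algebra $\Gamma$, which identifies $\nu$ with $\Sigma^3$ on $\cd_{fd}(\Gamma)$ and yields $\Hom_{\per\Gamma}(N, \Gamma[j]) \cong D H^{3-j}(N)$. Paired with the degree bounds on the homology of $X$, $Y$ and of the intervening objects imposed by the canonical $t$-structure on $\cd(\Gamma)$ (available because $\Gamma$ is concentrated in non-positive degrees), this should suffice to kill the relevant obstructions. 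The passage to the idempotent completion inherent in $\cc_{Q,W}$ is then handled by observing that any cone in the quotient representing an object of $\fpr(\Gamma)$ lifts to a cone in $\per(\Gamma)$ up to isomorphism.

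The main obstacle I anticipate is establishing the bimodule $3$-Calabi-Yau property of $\Gamma$ in the generality needed here—i.e., for a possibly Jacobi-infinite, pseudocompact Ginzburg dg algebra—in a form delivering the duality $D\Hom_{\per\Gamma}(P, N) \cong \Hom_{\per\Gamma}(N, P[3])$ for $P$ perfect and $N$ in $\cd_{fd}(\Gamma)$. In the Jacobi-finite case one can appeal to Amiot's theorem directly, but without Hom-finiteness of $\cc_{Q,W}$ one must instead work in the pseudocompact framework developed in the appendix. Once this duality is available, the remaining vanishings follow by routine $t$-structure bookkeeping using the two-term presentations of objects of $\cf$, and the equivalence $\cf \simeq \fpr(\Gamma)$ drops out; this is essentially the argument of Plamondon \cite{Plamondon09}, to which the proposition is attributed.
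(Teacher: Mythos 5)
The paper does not actually prove this proposition: it is quoted from Plamondon's thesis (then in preparation) and is only remarked to generalize Proposition 2.9 of \cite{Amiot08a}, so there is no in-paper argument to compare yours against. That said, your proposal is the standard Amiot--Plamondon strategy and is essentially sound. Two remarks. First, the ``main obstacle'' you flag --- the bimodule $3$-Calabi-Yau property of the completed Ginzburg algebra in the Jacobi-infinite, pseudocompact setting --- is not an obstacle in this paper: Theorem~\ref{T:ginzburg-smooth} together with the lemma of section~\ref{ss:CY-property} and part f) of Proposition~\ref{P:pseudocompact-derived-category} already deliver the duality $\Hom(N,\Gamma[i])\cong D H^{3-i}(N)$ for $N\in\cd_{fd}(\Gamma)$, which is all you need. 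Second, the place where your sketch stops short of a proof is precisely the ``routine $t$-structure bookkeeping'': for surjectivity one must first replace $X'$ by $\tau_{\leq 0}X'$ (legitimate because $\tau_{>0}X'$ has finite-dimensional homology, as one reads off the triangle $X'\to X\to N$ using $X\in\cd_{\leq 0}$) so that the cone $N$ of $s$ lies in $\cd_{\leq 0}$, whence $\Hom(N,\Sigma Y)=0$ since it is squeezed between $DH^{2}(N)$ and $DH^{1}(N)$; for injectivity one must split a factorization $X\to N\to Y$ through the truncation triangle of $N$ at degree $0$, using $\Hom(X,\tau_{>0}N)=0$ (controlled by $H^{0}$ and $H^{-1}$) and $\Hom(\tau_{\leq 0}N,Y)=0$ (controlled by $DH^{3}$ and $DH^{2}$). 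These reductions, not the CY duality itself, are the actual content, and they should be written out; likewise the lifting of idempotents needed to identify $\add(\pi\Gamma)$ with $\pi(\add\Gamma)$ in the idempotent completion deserves a word (it follows from the Krull--Schmidt property of $\per(\Gamma)$ established in section~\ref{ss:perfect-Krull-Schmidt} and the semiperfectness of $J(Q,W)$). With those points filled in, your argument is a correct proof.
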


As immediate consequences of
Proposition~\ref{P:fundamental-domain-of-cluster-category}, we have
\[\Hom_{\cc_{Q,W}}(\Gamma,\Gamma)=\Hom_{\per(\Gamma)}(\Gamma,\Gamma)=J(Q,W),\]
and
\[\Hom_{\cc_{Q,W}}(\Gamma,\Sigma\Gamma)=\Hom_{\per(\Gamma)}(\Gamma,\Sigma\Gamma)=0.\]

\begin{proposition}
The functor $\Hom_{\cc_{Q,W}}(\Gamma,?)$ induces an equivalence
from the additive quotient category $\fpr(\Gamma)/(\Sigma\Gamma)$ to
the category $\mod J(Q,W)$ of finitely presented modules over the
Jacobian algebra $J(Q,W)$.
\end{proposition}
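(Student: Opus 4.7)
The plan is to verify directly that the functor $H := \Hom_{\cc_{Q,W}}(\Gamma,-)$ sends $\fpr(\Gamma)$ into $\mod J(Q,W)$, annihilates the ideal $(\Sigma\Gamma)$, and then descends to an equivalence. I will make repeated use of the two computations recorded just before the proposition, namely $\Hom_{\cc_{Q,W}}(\Gamma,\Gamma) = J(Q,W)$ and $\Hom_{\cc_{Q,W}}(\Gamma,\Sigma\Gamma) = 0$. For each $X \in \fpr(\Gamma)$, I would fix a presenting triangle $P_1 \to P_0 \to X \to \Sigma P_1$ in $\cc_{Q,W}$ with $P_0,P_1 \in \add(\pi\Gamma)$. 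Applying $H$ and invoking the vanishing, the long exact sequence collapses to a finite presentation
\[
H(P_1) \to H(P_0) \to H(X) \to 0
\]
whose first two terms are finitely generated projective $J(Q,W)$-modules; so $H(X) \in \mod J(Q,W)$. The same vanishing immediately shows that $H$ kills every morphism factoring through an object of $\add(\Sigma\pi\Gamma)$, so $H$ descends to an additive functor $\bar H : \fpr(\Gamma)/(\Sigma\Gamma) \to \mod J(Q,W)$.

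Essential surjectivity is easy: given $M \in \mod J(Q,W)$ with a finite presentation $J(Q,W)^a \to J(Q,W)^b \to M \to 0$, the entries of the presenting matrix lie in $J(Q,W) = \End_{\cc_{Q,W}}(\Gamma)$, so I can lift it to a morphism $\Gamma^a \to \Gamma^b$ in $\cc_{Q,W}$ and take its cone $X \in \fpr(\Gamma)$; the collapsed sequence above yields $H(X) \cong M$.

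The main step is full faithfulness. Fix $X, Y \in \fpr(\Gamma)$ with presenting triangles as above, and apply $\Hom_{\cc_{Q,W}}(-, Y)$ to the one for $X$ to obtain the exact sequence
\[
\Hom_{\cc_{Q,W}}(\Sigma P_1, Y) \xrightarrow{w^*} \Hom_{\cc_{Q,W}}(X, Y) \xrightarrow{v^*} \Hom_{\cc_{Q,W}}(P_0, Y) \xrightarrow{u^*} \Hom_{\cc_{Q,W}}(P_1, Y).
\]
Since $H$ is fully faithful on $\add(\pi\Gamma)$ (immediate by additivity from $H(\Gamma) = J(Q,W)$), the two rightmost terms identify with $\Hom_J(H(P_0), H(Y))$ and $\Hom_J(H(P_1), H(Y))$, and left-exactness of $\Hom_J(-, H(Y))$ applied to the presentation of $H(X)$ identifies $\ker(u^*)$ with $\Hom_J(H(X), H(Y))$. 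Since $v^*$ is naturally identified with the composition $\Hom_{\cc_{Q,W}}(X,Y) \xrightarrow{H} \Hom_J(H(X),H(Y)) \hookrightarrow \Hom_J(H(P_0),H(Y))$, a short diagram chase produces an isomorphism
\[
\Hom_{\cc_{Q,W}}(X, Y)\big/\im(w^*) \,\iso\, \Hom_J(H(X), H(Y)).
\]

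The point I expect to require the most care is identifying $\im(w^*)$ with the full ideal $(\Sigma\Gamma)(X, Y)$. One inclusion is trivial since $\Sigma P_1 \in \add(\Sigma\pi\Gamma)$. For the reverse, given any factorization $f: X \to \Sigma P \to Y$ with $P \in \add(\pi\Gamma)$, I would apply $\Hom_{\cc_{Q,W}}(-, \Sigma P)$ to the $X$-triangle; since $\Hom_{\cc_{Q,W}}(P_0, \Sigma P) = 0$ by the basic vanishing, the map $\Hom_{\cc_{Q,W}}(\Sigma P_1, \Sigma P) \to \Hom_{\cc_{Q,W}}(X, \Sigma P)$ is surjective, so $X \to \Sigma P$ factors through $w: X \to \Sigma P_1$, and hence so does $f$. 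This closes the argument and shows that $\bar H$ is an equivalence.
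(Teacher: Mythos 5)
Your argument is correct, and it is precisely the standard argument that the paper delegates to its citations (Proposition~2.1~c) of \cite{KellerReiten07} and Theorem~2.2 of \cite{BuanMarshReiten04}): present $X$ by a triangle over $\add(\pi\Gamma)$, use the vanishing of $\Hom_{\cc_{Q,W}}(\Gamma,\Sigma\Gamma)$ to collapse the long exact sequences, and identify the kernel of the induced map on morphism spaces with the ideal of maps factoring through $\add(\Sigma\pi\Gamma)$ via the surjectivity of $w^*$ onto $\Hom(X,\Sigma P)$. Since the paper gives no details beyond the reference, your write-up supplies exactly the intended proof.
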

\begin{proof} As in \cite[Proposition 2.1
c)]{KellerReiten07}, \confer also \cite[Theorem
2.2]{BuanMarshReiten04}.
\end{proof}

We denote the functor $\Hom_{\cc_{Q,W}}(\Gamma,?)$ by $\Psi$ and
also use this symbol for the induced equivalence as in the preceding
proposition.

\begin{lemma}\cite{Plamondon09}\label{L:ses-lifts-to-triangle} Short exact sequences in $\mod
J(Q,W)$ lift to triangles in $\fpr(\Gamma)$. More precisely, given a
short exact sequence in $\mod J(Q,W)$, we can find a triangle in
$\cc_{Q,W}$ whose terms are in $\fpr(\Gamma)$ and whose image
under the functor $\Psi$ is isomorphic to the given short exact
sequence.
\end{lemma}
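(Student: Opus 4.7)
The plan is to realize the short exact sequence as a horseshoe of projective presentations over $J(Q,W)$, lift the resulting $3\times 2$ diagram to $\per(\Gamma)$, take vertical cones to obtain a triangle, and project to $\cc_{Q,W}$.

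First I would choose projective presentations $J^{r_1}\xrightarrow{\phi_L}J^{r_0}\to L\to 0$ and $J^{p_1}\xrightarrow{\phi_N}J^{p_0}\to N\to 0$, and invoke the classical horseshoe lemma to obtain a projective presentation $J^{r_1+p_1}\xrightarrow{\phi_M}J^{r_0+p_0}\to M\to 0$ with $\phi_M$ block upper triangular with diagonal blocks $\phi_L,\phi_N$, fitting into a commutative diagram whose top two rows are split short exact sequences of free $J$-modules. Since $\Gamma$ is concentrated in non-positive degrees, the restriction $H^0:\add(\Gamma)\to\add(J)$ is an equivalence of additive categories, so this horseshoe diagram lifts tautologically to a strictly commutative diagram in $\per(\Gamma)$
\[
\xymatrix@R=3ex{
\Gamma^{r_1}\ar[r]\ar[d]_{\tilde\phi_L} & \Gamma^{r_1+p_1}\ar[r]\ar[d]_{\tilde\phi_M} & \Gamma^{p_1}\ar[d]_{\tilde\phi_N} \\
\Gamma^{r_0}\ar[r] & \Gamma^{r_0+p_0}\ar[r] & \Gamma^{p_0}
}
\]
with split horizontal rows and block-upper-triangular $\tilde\phi_M$. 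The octahedral axiom, applied to this upper-triangular morphism, then yields a distinguished triangle $\tilde L\to\tilde M\to\tilde N\to\Sigma\tilde L$ in $\per(\Gamma)$ whose terms are the mapping cones of the three columns. Each of these cones lies in $\cf$, so projecting via $\pi:\per(\Gamma)\to\cc_{Q,W}$ gives a triangle in $\cc_{Q,W}$ whose terms lie in $\fpr(\Gamma)$.

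To finish, it remains to check that $\Psi$ sends this triangle to the original short exact sequence. By Proposition~\ref{P:fundamental-domain-of-cluster-category} the projection $\pi$ is fully faithful on $\cf$, so $\Psi(\pi\tilde X)=\Hom_{\per(\Gamma)}(\Gamma,\tilde X)$ for $\tilde X\in\cf$; applying $\Hom_{\per(\Gamma)}(\Gamma,-)$ to the defining triangle $\Gamma^{a_1}\to\Gamma^{a_0}\to\tilde X\to\Sigma\Gamma^{a_1}$ and using $\Hom_{\per(\Gamma)}(\Gamma,\Sigma\Gamma)=H^1(\Gamma)=0$ identifies $\Psi(\pi\tilde X)$ with $\cok(J^{a_1}\to J^{a_0})$. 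By the horseshoe construction this cokernel equals $L$, $M$ or $N$ respectively, and the induced maps are $f$ and $g$. The only technical subtlety is to make the lifted horseshoe diagram strictly commute in $\per(\Gamma)$, and this is where the concentration of $\Gamma$ in non-positive degrees is essential: it makes $\Hom_{\per(\Gamma)}(\Gamma^a,\Gamma^b)=J^{a\times b}$ so that matrix entries of morphisms in $\per(\Gamma)$ and in $\mod J(Q,W)$ coincide, and the lift reduces to keeping the same matrices.
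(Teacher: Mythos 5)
The paper states this lemma without proof, attributing it to Plamondon's thesis \cite{Plamondon09}, so there is no internal argument to compare yours against; judged on its own, your proof is correct and is the expected one (a horseshoe of finite free presentations, lifted to $\add(\Gamma)\subset\per(\Gamma)$ and resolved by cones). Two points deserve a little more care than you give them. First, the ``tautological'' lift: morphisms in $\per(\Gamma)$ between free modules are homotopy classes, i.e.\ matrices over $J=H^0(\Gamma)=\Gamma^0/d(\Gamma^{-1})$, whereas to form cones and maps between cones you need honest dg-module morphisms, i.e.\ matrices over $Z^0(\Gamma)=\Gamma^0$. This is harmless here, but for the right reason: lift each matrix entry arbitrarily to $\Gamma^0$; the required squares then commute strictly because the horizontal maps are the canonical inclusions and projections of direct summands and $\tilde\phi_M$ is block upper triangular, so no homotopy coherence has to be arranged. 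Second, rather than invoking the octahedral axiom (which as literally stated concerns a composite of two morphisms and needs some massaging to produce the triangle of cones), it is cleaner to observe that the three cones fit into a short exact sequence of dg $\Gamma$-modules $0\to\tilde L\to\tilde M\to\tilde N\to 0$, split as a sequence of graded modules; by section~\ref{ss:derived-categories} such a sequence yields a triangle in $\cd(\Gamma)$, hence in $\per(\Gamma)$ since all three terms are perfect. Your final verification --- that $\Psi\circ\pi$ of each cone is the cokernel of its presentation, using $\Hom_{\per(\Gamma)}(\Gamma,\Sigma\Gamma)=H^1(\Gamma)=0$ and the full faithfulness of $\pi$ on $\cf$ from Proposition~\ref{P:fundamental-domain-of-cluster-category}, and that the induced maps are $f$ and $g$ by the horseshoe construction --- is correct.
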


Assume that $Q$ does not have loops. Let $j$ be any vertex of $Q$.
In this section (in contrast to Section~\ref{S:the-equivalence}), we
denote by $T_j$ the mapping cone of the morphism of dg modules
\[P_j\stackrel{(\rho)}{\longrightarrow} \bigoplus_{\rho}P_{t(\rho)},\]
where we have a summand $P_{t(\rho)}$ for each arrow $\rho$ of $Q$
with source $j$, and the corresponding component of the morphism is
the left multiplication by $\rho$. We denote by $R_j$ the mapping
cone of the morphism of dg modules
\[\bigoplus_{\tau}P_{s(\tau)}\stackrel{(\tau)}{\longrightarrow} P_j,\]
where we have a summand $P_{s(\tau)}$ for each arrow $\tau$ of $Q$
with target $j$, and the corresponding component of the morphism is
the left multiplication by $\tau$. From $\Sigma T_j$ to $R_j$ there
is a morphism of dg modules, which we will denote by $\varphi_j$,
given in matrix form as
\[\varphi_j=\left(\begin{array}{cc} -\tau^* & -\del_{\rho\tau}W\\ t_j & \rho^*\end{array}\right):\Sigma T_j\longrightarrow R_j.\]
The mapping cone of $\varphi_j$ is isomorphic as a dg
$\Gamma$-module to the standard cofibrant resolution of $S_j$ given
in Section~\ref{ss:cofibrant-resolutions}. Thus $\Sigma T_j$ and
$R_j$ are isomorphic in $\cc_{Q,W}$. In particular, $\Sigma T_j$
belongs to $\fpr(\Gamma)$. Direct calculation shows that the
$J(Q,W)$-module $\Psi(R_j)$ and hence $\Psi(\Sigma T_j)$ are
isomorphic to the simple module $S_j$.

From now on, we fix a vertex $i$ which does not lie on a 2-cycle.
Let $(Q',W')=\tilde{\mu}_i(Q,W)$, let $\Gamma'$ be the associated
complete Ginzburg dg algebra, and let $P'_j=e_j\Gamma'$ for each
vertex $j$ and let $S'_j$ be the corresponding simple dg
$\Gamma'$-module concentrated in degree~$0$. It follows from
Theorem~\ref{T:mainthm} that we have a triangle equivalence
\[\bar F:\cc_{(Q',W')}\longrightarrow\cc_{Q,W},\]
which sends $P'_j$ to $P_j$ for $j\neq i$, and $P'_i$ to $T_i$.

\begin{proposition}\cite{Plamondon09}\label{P:stability-mutation}
The equivalence $\bar F:\cc_{(Q',W')}\rightarrow \cc_{Q,W}$
restricts to an equivalence from $\fpr(\Gamma')$ to $\fpr(\Gamma)$.
\end{proposition}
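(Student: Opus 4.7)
The plan is to transfer the question to the perfect derived category via the equivalence $\pi : \cf \iso \fpr(\Gamma)$ from Proposition~\ref{P:fundamental-domain-of-cluster-category} (and its analogue $\pi' : \cf' \iso \fpr(\Gamma')$) and the triangle equivalence $F : \per(\Gamma') \iso \per(\Gamma)$ of Theorem~\ref{T:mainthm}. Given $X' \in \fpr(\Gamma')$, I would lift it to $\tilde X' = \mathrm{cone}(f') \in \cf'$ with $f' : A' \to B'$ a morphism in $\add(\Gamma')$; the relation $\bar F \circ \pi' = \pi \circ F$ then reduces the claim to showing that $\mathrm{cone}(F(f')) \in \per(\Gamma)$ is isomorphic, in $\cc_{Q,W}$, to an object of $\cf$.

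The first ingredient is that $F$ sends $\add(\Gamma')$ into $\cf$: for $j \neq i$ one has $F(P'_j) = P_j \in \add(\Gamma) \subseteq \cf$, while $F(P'_i) = T_i$ is by construction the cone of the morphism $P_i \to \bigoplus_{\alpha : s(\alpha) = i} P_{t(\alpha)}$ in $\add(\Gamma)$, hence lies in $\cf$. Consequently $F(f')$ is a morphism in $\per(\Gamma)$ between two objects of $\cf$, and $\mathrm{cone}(F(f'))$ fits into a triangle whose outer terms lie in $\cf$.

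The main step is therefore to prove that $\cf$ is closed under extensions in $\per(\Gamma)$. This rests on the degree concentration of $\Gamma$ in non-positive degrees, which gives $\Hom_{\per(\Gamma)}(P, \Sigma^k Q) = 0$ for $P, Q \in \add(\Gamma)$ and $k \geq 1$. Given a triangle $X_1 \to X \to X_2 \xrightarrow{\delta} \Sigma X_1$ with $X_i = \mathrm{cone}(A_i \xrightarrow{f_i} B_i)$, two successive factorization arguments---first using $\Hom(B_2, \Sigma^2 A_1) = 0$ and $\Hom(A_2, \Sigma A_1) = 0$, then $\Hom(B_2, \Sigma B_1) = 0$---show that $\delta$ factors as
\[
\delta : X_2 \twoheadrightarrow \Sigma A_2 \xrightarrow{\Sigma g} \Sigma B_1 \hookrightarrow \Sigma X_1
\]
for some morphism $g : A_2 \to B_1$ in $\add(\Gamma)$. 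The octahedral axiom (or $3\times 3$ lemma), applied to the evident morphism of triangles with middle vertical $h = \left(\begin{smallmatrix} f_1 & g \\ 0 & f_2 \end{smallmatrix}\right) : A_1 \oplus A_2 \to B_1 \oplus B_2$, then produces a triangle $X_1 \to \mathrm{cone}(h) \to X_2 \to \Sigma X_1$ whose connecting morphism equals $\delta$ by construction of $g$; this forces $X \cong \mathrm{cone}(h) \in \cf$.

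Combining the two steps yields $\bar F(\fpr(\Gamma')) \subseteq \fpr(\Gamma)$. For the reverse inclusion the same arguments apply to $\bar F^{-1}$: using the triangle $P_i \to \bigoplus_\alpha P_{t(\alpha)} \to T_i \to \Sigma P_i$ together with the isomorphism $\Sigma T'_i \cong R'_i$ in $\cc_{(Q',W')}$ noted before the proposition, one computes $\bar F^{-1}(\pi(P_i)) \cong \pi'(T'_i) \in \fpr(\Gamma')$, so $\bar F^{-1}$ likewise maps add-generators into the fundamental domain, and the extension-closure lemma closes the argument. The principal obstacle is the extension-closure step: while the Hom-vanishing is immediate from the degree concentration of $\Gamma$, chasing the factorization of $\delta$ and verifying that the octahedron reproduces the original connecting morphism requires careful diagram work.
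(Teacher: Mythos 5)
The paper itself offers no proof of this proposition---it is quoted from Plamondon's thesis---so your argument has to stand on its own. Your extension-closure lemma for $\cf$ is correct as stated (the factorization of $\delta$ through $\Sigma A_2\to\Sigma B_1$ and the reassembly via the matrix $h$ are standard and work because $\Hom_{\per(\Gamma)}(P,\Sigma^kQ)=0$ for $P,Q\in\add\Gamma$, $k\geq 1$). The gap is that this lemma does not apply to the object you need to treat. Writing $\tilde X'=\mathrm{cone}(f')$ with $f'\colon A'\to B'$ in $\add(\Gamma')$, the object $F\tilde X'=\mathrm{cone}(F(f'))$ sits in the rotated triangle $FB'\to F\tilde X'\to\Sigma FA'\to\Sigma FB'$: it is an extension of $\Sigma FA'$ by $FB'$, not an extension of two objects of $\cf$. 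Since $FA'$ contains copies of $T_i$, the outer term $\Sigma FA'$ contains copies of $\Sigma T_i$, and $\Sigma T_i$ does \emph{not} lie in $\cf$ up to isomorphism in $\per(\Gamma)$. The simplest instance already defeats the strategy: $\Sigma P'_i=\mathrm{cone}(P'_i\to 0)$ lies in $\cf'$, and $F(\Sigma P'_i)=\Sigma T_i$ satisfies $H^0(\Sigma T_i)=H^1(T_i)=0$, whereas any object of $\cf$ is either of the form $\Sigma P_1$ with $P_1\in\add\Gamma$ or has nonzero $H^0$ (split off the invertible part of the presenting morphism and use that a radical morphism has image in $P_0\m$); one checks both alternatives are impossible for $\Sigma T_i$. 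So the containment your argument aims to establish, $F(\cf')\subseteq\cf$ inside $\per(\Gamma)$, is simply false.

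The actual content of the proposition lies exactly at this point: one must pass to the quotient $\cc_{Q,W}$ and use the isomorphism $\Sigma T_i\cong R_i=\mathrm{cone}\bigl(\bigoplus_\tau P_{s(\tau)}\to P_i\bigr)$ recorded just before the proposition, an isomorphism which holds only in $\cc_{Q,W}$ because the cone of the comparison map $\varphi_i$ is the resolution of the simple $S_i$, a nonzero object of $\cd_{fd}(\Gamma)$. (Indeed $H^0(R_i)\neq 0=H^0(\Sigma T_i)$, so $\Sigma T_i\not\cong R_i$ in $\per(\Gamma)$.) You invoke this isomorphism only for the reverse inclusion, but it is equally indispensable for the forward one. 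Nor can you simply rerun your extension-closure computation inside $\cc_{Q,W}$ after substituting $R_i$ for $\Sigma T_i$: the vanishing $\Hom(B_2,\Sigma^2A_1)=0$ on which it rests has no reason to survive the passage to the quotient (in the Jacobi-finite case it fails outright, this space being dual to $\Hom(A_1,B_2)$ by the $2$-Calabi-Yau property). A correct proof has to combine the quotient-level replacement of $\Sigma T_i$ by $R_i$ with a careful reassembly of the resulting triangles; this step is missing from your proposal.
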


\begin{corollary}\label{C:nearly-morita-equivalence} There is a canonical equivalence of
categories of finitely presented modules
\[\mod J(Q',W')/(S'_i)\longrightarrow \mod J(Q,W)/(S_i),\]
which restricts to an equivalence between the subcategories of
finite dimensional modules.
\end{corollary}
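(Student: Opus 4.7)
The plan is to transport the equivalence $\bar F: \fpr(\Gamma') \iso \fpr(\Gamma)$ from Proposition~\ref{P:stability-mutation} through the equivalence $\Psi$ identifying $\fpr(\Gamma)/(\Sigma\Gamma)$ with $\mod J(Q,W)$, and its analogue $\Psi'$ for $(Q',W')$. The first step is to rewrite both sides of the desired equivalence purely in terms of the subcategories $\fpr$: since we have computed in this section that $\Psi(\Sigma T_i) \cong S_i$, the quotient $\mod J(Q,W)/(S_i)$ is the quotient of $\fpr(\Gamma)$ by the ideal of morphisms factoring through $\add(\Sigma\Gamma) \cup \add(\Sigma T_i)$. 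An analogous identification holds for $(Q',W')$: letting $T'_i \in \per(\Gamma')$ denote the cone of the canonical map $P'_i \to \bigoplus_{\rho':s(\rho')=i} P'_{t(\rho')}$, the same construction shows that $\Psi'(\Sigma T'_i) \cong S'_i$, and hence $\mod J(Q',W')/(S'_i)$ is the quotient of $\fpr(\Gamma')$ by morphisms factoring through $\add(\Sigma\Gamma' \cup \Sigma T'_i)$.

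The second step is to verify that $\bar F$ matches these two ideals. By Theorem~\ref{T:mainthm}, $\bar F$ sends $\Sigma P'_j$ to $\Sigma P_j$ for $j \neq i$ and $\Sigma P'_i$ to $\Sigma T_i$; the non-trivial point is to identify $\bar F(\Sigma T'_i)$ with $\Sigma P_i$ up to summands already killed by the quotient. To establish this I would apply Theorem~\ref{T:mainthm} again, now at the vertex $i$ of $(Q',W')$: by the involutivity part of the mutation theorem recalled in Section~\ref{S:mutation}, the quiver with potential $\tilde\mu_i(Q',W')$ is right-equivalent to $(Q,W)$ plus a trivial summand, so combining Theorem~\ref{T:mainthm} with the reduction quasi-isomorphism of Lemma~\ref{L:reduction} yields an equivalence $\bar G: \cc_{Q,W} \to \cc_{Q',W'}$ sending $P_j$ to $P'_j$ for $j \neq i$ and $P_i$ to $T'_i$. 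Using Lemma~\ref{L:imageofsimples} to compare the actions of $\bar F\bar G$ and $\bar G\bar F$ on the classes of simples, which generate the finite-dimensional derived categories, one checks that $\bar G$ is a quasi-inverse to $\bar F$; in particular $\bar F(T'_i) \cong P_i$ and hence $\bar F(\Sigma T'_i) \cong \Sigma P_i$.

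With the two ideals matched, $\bar F$ descends to the sought equivalence $\mod J(Q',W')/(S'_i) \to \mod J(Q,W)/(S_i)$. The restriction to finite-dimensional modules is then automatic: $\Psi$ sends objects lying in $\fpr(\Gamma) \cap \cd_{fd}(\Gamma)$ to finite-dimensional $J(Q,W)$-modules and every finite-dimensional $J(Q,W)$-module arises this way by taking a projective presentation, while by Theorem~\ref{T:mainthm} the equivalence $\bar F$ preserves the property of having finite-dimensional total homology.

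The main obstacle I anticipate is the identification $\bar F(\Sigma T'_i) \cong \Sigma P_i$ in the second step. While morally clear from the involutivity of quiver-with-potential mutation, a careful argument requires constructing the complementary derived equivalence $\bar G$, controlling the trivial correction between $\tilde\mu_i^2(Q,W)$ and $(Q,W)$ via Lemma~\ref{L:reduction}, and then comparing $\bar F\bar G$ with the identity on a generating set of simples through Lemma~\ref{L:imageofsimples}. Once this calibration is in place, the rest of the argument is essentially formal.
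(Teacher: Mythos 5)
Your overall skeleton agrees with the paper's: identify $\mod J(Q,W)/(S_i)$ with $\fpr(\Gamma)/(\Sigma\Gamma\oplus\Sigma T_i)$ via $\Psi$ (and likewise with primes), invoke Proposition~\ref{P:stability-mutation}, and descend $\bar F$. But your key calibration step is flawed. You propose to prove $\bar F(\Sigma T'_i)\cong \Sigma P_i$ by building a second equivalence $\bar G$ from the involutivity of mutation and checking on simples that it is quasi-inverse to $\bar F$. It is not: applying Lemma~\ref{L:imageofsimples} to each of the two mutation equivalences gives $F(S'_i)\cong\Sigma S_i$ and (for your $\bar G$) $S_i\mapsto \Sigma S'_i$, so the composite sends $S_i$ to $\Sigma^2 S_i$ and is a twist along $S_i$ rather than the identity on the derived or finite-dimensional derived categories. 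Passing to the cluster categories does not rescue the argument, because there the simples become zero, so "comparison on simples" verifies nothing. The paper avoids all of this: it works with $R'_i=\mathrm{Cone}\bigl(\bigoplus_\alpha P'_{t(\alpha)}\xrightarrow{(\alpha^\star)} P'_i\bigr)$, which is isomorphic to $\Sigma T'_i$ in $\cc_{Q',W'}$ (their difference is the resolution of $S'_i$, killed in the quotient), and observes that the dg functor underlying $F$ takes $(\alpha^\star)$ to the inclusions $f_{\alpha^\star}\colon P_{t(\alpha)}\hookrightarrow T_i$ of the summands of $T_i$, whose cone is visibly quasi-isomorphic to $\Sigma P_i$. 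That one-line computation is what your construction of $\bar G$ cannot replace.

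The second gap is your claim that the restriction to finite-dimensional modules is "automatic." If $M$ is a finite-dimensional $J(Q',W')$-module and $X\in\fpr(\Gamma')$ satisfies $\Psi'(X)\cong M$, then $X$ is the cone of a morphism of finitely generated projectives and in general has infinite-dimensional $H^{-1}$ (already $X=\Sigma\Gamma'$ is not in $\cd_{fd}(\Gamma')$ when the Jacobian algebra is infinite-dimensional). So the fact that $F$ preserves $\cd_{fd}$ says nothing about whether $\Psi(\bar F X)=\Phi(M)$ is finite-dimensional. This is where the paper does real work: it computes $\Phi(S'_j)$ explicitly for each vertex $j$ (showing $\Phi(S'_i)=0$ and, for $j\neq i$, exhibiting $\Phi(S'_j)$ as a finite-codimension quotient of $H^0P_j$ extended by finitely many copies of $S_i$), and then argues by induction on $\dim M$ using Lemma~\ref{L:ses-lifts-to-triangle} to lift short exact sequences to triangles in $\fpr(\Gamma')$. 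You need to supply both of these arguments for the corollary to be proved.
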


\begin{remark} The restriction of the equivalence in Corollary~\ref{C:nearly-morita-equivalence} to
the subcategories of finite dimensional modules is a nearly Morita
equivalence in the sense of Ringel \cite{Ringel07}. It was first
shown in \cite[Theorem 7.1]{BuanIyamaReitenSmith08} that  the two
Jacobian algebras $J(Q,W)$ and $J(Q',W')$ are nearly Morita
equivalent in the sense of Ringel. Another proof was given in
\cite[Proposition 6.1]{DerksenWeymanZelevinsky09}.
\end{remark}

\begin{proof}
For each vertex $j$ of $Q'$, we define dg $\Gamma'$-modules $T'_j$
and $R'_j$ analogously to the dg $\Gamma$-modules $T_j$ and $R_j$.
Explicitly, $R'_i$ is the mapping cone of the morphism of dg modules
\[\bigoplus_{\alpha}P'_{t(\alpha)}\stackrel{(\alpha^{\star})}{\longrightarrow} P'_i,\]
where $\alpha$ runs over all arrows of $Q$ with source $i$. The dg
functor underlying $F$ takes this morphism of dg modules to the
morphism
\[\bigoplus_{\alpha}P_{t(\alpha)}\stackrel{(f_{\alpha^{\star}})}{\longrightarrow} T_i,\]
where $f_{\alpha^{\star}}$ was defined in Section~\ref{ss:bimodule}.
In fact, it is exactly the inclusion of the summand $P_{t(\alpha)}$
corresponding to the arrow $\alpha$ into $T_i$. We immediately
deduce that the mapping cone of this latter morphism is
quasi-isomorphic to $\Sigma P_i$. So the functor $F$ takes $R'_i$ to
$\Sigma P_i$. Thus we have isomorphisms in $\cc_{Q,W}$
\[\Sigma\bar F(T'_i)\cong \bar F(R'_i)\cong\Sigma P_i.\]

 By
Proposition~\ref{P:stability-mutation}, the equivalence $\bar F$
induces an equivalence
\[\fpr(\Gamma')/(\Sigma\Gamma'\oplus\Sigma T'_i)\longrightarrow\fpr(\Gamma)/(\Sigma\Gamma\oplus\Sigma T_i).\]
Thus, we have the following square
\[\xymatrix{ \fpr(\Gamma')/(\Sigma\Gamma'\oplus\Sigma T'_i)\ar[r]^{\simeq}\ar[d]^{\simeq} & \mod(J(Q',W'))/(S'_i)\ar @{-->}[d]^\Phi\\
\fpr(\Gamma)/(\Sigma\Gamma\oplus\Sigma T_i)\ar[r]^{\simeq} &
\mod(J(Q,W))/(S_i)}
\]
The induced functor $\Phi$ from $\mod(J(Q',W'))/(S'_i)$ to
$\mod(J(Q,W))/(S_i)$, represented by the dashed arrow, is an
equivalence as well. Explicitly, for a finitely presented module $M$
over $J(Q,W')$, the module $\Phi(M)$ is defined as $\Psi\circ\bar F
(X)$, where $X$ is an object of $\fpr(\Gamma')$ such that
$\Psi'(X)\cong M$.

It remains to prove that the functor $\Phi$ and its quasi-inverse
$\Phi^{-1}$ take finite dimensional modules to finite dimensional
modules. We will prove the assertion for $\Phi$ by induction on the
dimension of the module: it is similar for $\Phi^{-1}$.

Let $j$ be a vertex of $Q$. We have seen that the simple
$J(Q',W')$-module $S'_j$ is the image of $\Sigma T'_j$ under the
functor $\Psi'$. First of all, we have
\[\Phi(S'_i)=\Psi\circ\bar F (\Sigma T'_i)=\Psi(\Sigma P_i)=0.\]
In the following we assume that $j$ is different from $i$. We have a
triangle in $\cc_{(Q',W')}$:
\[\xymatrix{\bigoplus_{\rho}P'_{s(\rho)}\ar[r]^(.6){(\rho)}&P'_j\ar[r]&
\Sigma T'_j\ar[r]&\Sigma\bigoplus_{\rho}P'_{s(\rho)},}\] where
$\rho$ ranges over all arrows of $Q'$ with target $j$.   We apply
the functor $\bar F$ and obtain a triangle in $\cc_{Q,W}$:
\[\xymatrix{
\bigoplus_{\rho}P_{s(\rho)}\oplus\bigoplus_{\alpha,b}P_{s(b)}\oplus\bigoplus_{\beta}T_i\ar[rr]^(.75){(f_{\rho},f_{[\alpha
b]},f_{\beta^{\star}})} &&P_j\ar[r] & \Sigma F(T'_j)}\]
\[\xymatrix{\ar[rr]&& \bigoplus_{\rho} \Sigma
P_{s(\rho)}\oplus\bigoplus_{\alpha,b}\Sigma
P_{s(b)}\oplus\bigoplus_{\beta}\Sigma T_i,}\] where $\rho$ ranges
over arrows of $Q$ with target $j$ and with source different from
$i$, $\alpha$ ranges over all arrows of $Q$ with source $i$ and
target $j$, $b$ ranges all arrows of $Q$ with target $i$ and $\beta$
ranges over all arrows of $Q$ with source $j$ and target $i$. Note
that the first two terms from the left are in the fundamental domain
$\cf$. Thus, applying the functor $\Psi$, we obtain an exact
sequence
\[\xymatrix{H^0(\bigoplus_{\rho}P_{s(\rho)}\oplus\bigoplus_{\alpha,b}P_{s(b)}\oplus\bigoplus_{\beta}T_i)\ar[rr]^(.75){H^0(f_{\rho},f_{[\alpha b]},f_{\beta^{\star}})}&& H^0 P_j\ar[r] & \Phi(S'_j)
\ar[r]& \bigoplus_{\beta}S_i.}\] The image of the first map is the
$J(Q,W)$-submodule of $H^0P_j$ generated by $\rho$, $\alpha b$, and
$\del_{a\beta}W$, where $\rho$, $\alpha$, $\beta$, $a$ and $b$ are
arrows of $Q$ such that $s(\rho)\neq i$,
$t(\rho)=t(\alpha)=s(\beta)=j$, $s(\alpha)=t(b)=s(a)=t(\beta)=i$. It
has finite codimension in $H^0P_j$. Therefore, the module
$\Phi(S'_j)$ is finite dimensional.

Now let $M$ be a finite dimensional $J(Q',W')$-module. Suppose
\[\xymatrix{0\ar[r]&L\ar[r]&M\ar[r]&N\ar[r]&0}\]
is a short exact sequence in $\mod J(Q',W')$ such that $L$ and $N$
are nontrivial. Following Lemma~\ref{L:ses-lifts-to-triangle}, we
have a triangle in $\fpr(\Gamma')$:
\[\xymatrix{X\ar[r]& Y\ar[r] & Z\ar[r] & \Sigma X,}\]
whose image under the functor $\Psi'$ is isomorphic to the above
short exact sequence. We apply the functor $\bar F$ to this triangle
and obtain a triangle in $\fpr(\Gamma)$:
\[\xymatrix{\bar F X\ar[r]& \bar F Y\ar[r] & \bar F Z\ar[r] & \Sigma \bar F X.}\]
Applying $\Psi$ to this triangle, we obtain an exact sequence in
$\mod J(Q,W)$:
\[\xymatrix{\Phi(L)\ar[r] & \Phi(M)\ar[r] & \Phi(N).}\]
By induction hypothesis, both $\Phi(L)$ and $\Phi(N)$ are finite
dimensional, and hence so is $\Phi(M)$. This completes the proof.
\end{proof}

\section{Tilting between Jacobian algebras}
\label{s:tilting-Jacobian-algebras}

\subsection{The canonical $t$-structure} \label{ss:canonical-t-structure}
Let $A$ be a dg $k$-algebra such that the homology $H^p (A)$
vanishes for all $p>0$. Let $\cd_{\leq 0}$ be the full subcategory
of the derived category $\cd(A)$ whose objects are the dg modules
$M$ such that the homology $H^p(M)$ vanishes for all $p>0$. The
following lemma already appears in section~2.1 of \cite{Amiot08a}.
For the convenience of the reader, we include a detailed proof
(slightly different from the one in \cite{Amiot08a}).

\begin{lemma} \label{lemma:canonical-t-structure}
\begin{itemize}
\item[a)] The subcategory $\cd_{\leq 0}$ is a left aisle
\cite{KellerVossieck88} in the derived category $\cd(A)$.
\item[b)] The functor $M \mapsto H^0(M)$ induces an equivalence from the heart
of the corresponding $t$-structure to the category $\Mod H^0(A)$
of all right $H^0(A)$-modules.
\end{itemize}
\end{lemma}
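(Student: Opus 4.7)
The plan is to reduce to the case where $A$ is concentrated in non-positive degrees and then build the truncation functors by hand. Since $H^p(A)=0$ for $p>0$, the dg subalgebra $A':=\tau_{\leq 0}A$ (components $A^n$ for $n<0$, $\ker(d^0_A)$ in degree $0$, and zero above; stability under product and differential is direct) is quasi-isomorphic to $A$, and restriction along the inclusion induces a triangle equivalence $\cd(A)\iso\cd(A')$ identifying both the subcategories $\cd_{\leq 0}$ and the functors $H^0$. So without loss of generality $A^p=0$ for all $p>0$ and $H^0(A)=A^0/dA^{-1}$.

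For part (a), I would define $\tau_{\leq 0}M$ to be the dg submodule of $M$ with components $M^n$ for $n<0$, $\ker(d^0_M)$ in degree $0$, and zero in positive degrees; $A$-stability is automatic since $A$ is in non-positive degrees. The quotient $\tau_{>0}M=M/\tau_{\leq 0}M$ then fits into a short exact sequence that yields a triangle in $\cd(A)$, with $H^p(\tau_{\leq 0}M)=H^p(M)$ for $p\leq 0$ and $H^p(\tau_{>0}M)=H^p(M)$ for $p\geq 1$. Closure of $\cd_{\leq 0}$ under $\Sigma$ and extensions is routine from the long exact sequence. The key orthogonality $\Hom_{\cd(A)}(L,N)=0$ for $L\in\cd_{\leq 0}$ and $N$ with $H^p(N)=0$ for $p\leq 0$ follows by a filtration argument: a cofibrant resolution $\mathbf{p}L$ can be assembled from shifts $\Sigma^n e_jA$ with $n\geq 0$ (possible because $A$ is in non-positive degrees), and $\Hom_{\cd(A)}(\Sigma^n e_jA,N)=H^{-n}(e_jN)$ vanishes for all such $n$; the induction over the filtration $F_p$ of $\mathbf{p}L$ closes up using the Milnor $\lim^1$-term, which vanishes by the same reason.

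For part (b), define the candidate inverse $\iota\colon\Mod H^0(A)\to\cd(A)$ sending an $H^0(A)$-module $X$ to the dg $A$-module concentrated in degree zero with value $X$, on which $A$ acts via the projection $A\to H^0(A)$; this is well-defined because $A^{\neq 0}$ acts trivially by degree reasons. Then $\iota X$ lies in the heart and $H^0\iota=\mathrm{id}$. For $M$ in the heart, the chain $M\leftarrow\tau_{\leq 0}M\to H^0(M)[0]$ is a zig-zag of quasi-isomorphisms (the second because its kernel $\tau_{\leq -1}M$ is acyclic), giving $\iota H^0\cong\mathrm{id}$. Full faithfulness is a direct computation: if $P\to\iota X$ is a cofibrant resolution with $P$ in non-positive degrees, then $\cHom_A^0(P,\iota Y)=\Hom_{A^0}(P^0,Y)$ with differential given by precomposition with $d_P^{-1}$, whence $H^0\cHom_A(P,\iota Y)=\Hom_{A^0}(P^0/d(P^{-1}),Y)=\Hom_{A^0}(X,Y)$; and since $dA^{-1}$ acts trivially on $Y$, this equals $\Hom_{H^0(A)}(X,Y)$.

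The main obstacle is the orthogonality in part (a): the cofibrant resolution built over a filtration is typically unbounded below, so one must control both the direct $\Hom$-vanishing on each stratum and the Milnor $\lim^1$-term arising from the filtered limit. Everything else is essentially bookkeeping.
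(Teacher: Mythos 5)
Your proof is correct and shares the paper's overall skeleton: replace $A$ by $\tau_{\leq 0}A$ and use the naive truncation functors to produce the required triangle. Where you diverge is in the two non-formal verifications, and in both cases the paper stays at the level of truncations while you pass to resolutions. For the orthogonality in (a), the paper exploits the fact that the truncation triangle is functorial in $M$ at the level of complexes: a morphism $f\colon L\to\tau_{>0}M$ with $L\in\cd_{\leq 0}$ factors through $\tau_{>0}L$, which is acyclic --- a one-line argument. Your route via a cofibrant resolution of $L$ assembled from shifts $\Sigma^nA$ with $n\geq 0$, plus the Milnor $\lim^1$-sequence over the filtration, is valid (such a filtration does exist for any $L$ with homology in degrees $\leq 0$ over a non-positively graded $A$), but it is the heaviest step of your argument and is exactly what the functoriality trick makes unnecessary; note also that the lemma is stated for an arbitrary dg algebra $A$, so the building blocks should be shifts of $A$ itself rather than of the $e_jA$. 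For full faithfulness in (b), the paper computes $\Hom_{\cd(A)}(L,M)$ by the calculus of left fractions, as a colimit over quasi-isomorphisms $M\to M'$ restricted to a cofinal subcategory of targets concentrated in degrees $\geq 0$; your computation with an explicit non-positively graded cofibrant resolution $P$ and the identification $H^0\cHom_A(P,\iota Y)=\Hom_{A^0}(P^0/dP^{-1},Y)=\Hom_{H^0(A)}(X,Y)$ is a legitimate alternative, arguably more concrete, at the price of reusing the resolution machinery from (a). Both routes prove the same statement; the paper's is shorter precisely because it never needs cofibrant resolutions at all.
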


\begin{proof} a) Clearly, the subcategory $\cd_{\leq 0}$ is
stable under the suspension functor $\Sigma$ and under extensions.
We have to show that for each object $M$ of $\cd(A)$, there
is a triangle
\[
M' \to M \to M'' \to \Sigma M'
\]
such that $M'$ belongs to $\cd_{\leq 0}$ and $M''$ is
right orthogonal to $\cd_{\leq 0}$.
The map of complexes $\tau_{\leq 0} A \to A$ is a
quasi-isomorphism of dg algebras. Thus we may assume that the
components $A^p$ vanish for all $p>0$. If $M$ is a dg module,
the subcomplex $\tau_{\leq 0} M$ of $M$ is then a dg submodule
and thus the sequence
\[
0 \to \tau_{\leq 0} M \to M \to \tau_{> 0} M \to 0
\]
is an exact sequence of dg modules. Of course, the two truncation
functors preserve quasi-isomorphisms and thus induce functors
from $\cd(A)$ to itself so that the above sequence yields
a triangle functorial in the object $M$ of $\cd(A)$. In this
triangle, the object $\tau_{\leq 0} M$ lies in of course in
$\cd_{\leq 0}$. Let us show that $\tau_{>0} M$ lies in the
right orthogonal subcategory of $\cd_{\leq 0}$. Indeed, if
$L$ lies in $\cd_{\leq 0}$ and we have a morphism $f: L \to \tau_{>0} M$,
then, by the functoriality of the triangle, $f$ factors through
$\tau_{>0} L$, which vanishes.

b) As in the proof of a), we may assume that the components $A^p$
vanish for $p>0$. We then have the morphism of dg algebras $A \to H^0(A)$.
The restriction along this morphism yields a functor from
$\Mod H^0(A)$ to the heart $\ch$ of the $t$-structure. Using
the truncation functors defined in a), we see that this functor
is essentially surjective. Let us show that it is fully faithful.
Let $L$ and $M$ in be in $\Mod H^0(A)$.
We compute morphisms between their images in
$\cd(A)$. We have
\[
(\cd(A))(L,M) = \colim \ch(A)(L, M')
\]
where $M'$ ranges through the category of quasi-isomorphisms with
source $M$ in $\ch(A)$. Now for each object $M \to M'$ in this
category, we have the object $M \to M' \to \tau_{\geq 0} M'$. This
shows that the objects $M'$ with $M'^n=0$ for $n<0$ form a cofinal
subcategory.  We restrict the colimit to this cofinal subcategory.
We find
\[
\colim \ch(A)(L, M') = \colim \ch(A)(L, \tau_{\leq 0}
M') \iso \colim \ch(A)(L, H^0(M')) = \ch(A)(L,M).
\]
Thus the functor $\Mod H^0(A) \to \cd(A)$ is fully faithful.
\end{proof}

\subsection{Comparison of t-structures}\label{ss:comparison-of-t-structures}
Assume that $(Q,W)$ and $(Q',W')$ are two quivers with potential
related by the mutation at a vertex $i$. Let $\Gamma$ and $\Gamma'$
be the corresponding complete Ginzburg algebras and
\[
F: \cd(\Gamma') \to \cd(\Gamma)
\]
the associated triangle equivalence taking $P'_j=e_j \Gamma'$
to $P_j$ for $j\neq i$ and $P_i'$ to the cone over the morphism
\[
P_i \to \bigoplus_{\alpha: s(\alpha)= i} P_{t(\alpha)}
\]
whose components are the left multiplications by the corresponding arrows
$\alpha$. The image under $F$ of the canonical $t$-structure
on $\cd(\Gamma')$ (\confer section~\ref{ss:canonical-t-structure})
is a new $t$-structure on $\cd(\Gamma)$. Let us
denote its left aisle by $\cd_{\leq 0}'$ and its right
aisle by $\cd_{>0}'$. Let us denote
the left aisle of the canonical $t$-structure on $\cd(\Gamma)$
by $\cd_{\leq 0}$ and let $\ca$ be its heart.
By lemma~\ref{lemma:canonical-t-structure} b),
the category $\ca$ is equivalent to the category $\Mod(J(Q,W))$
of all right modules over the Jacobian algebra of $(Q,W)$.

The following lemma shows how the new $t$-structure is obtained from
the old one and that the two hearts are `piecewise equivalent'.
However, in general, the new heart is not tilted from the old one in
the sense of \cite{HappelReitenSmaloe96}, \confer also
\cite{Bridgeland05}, because these hearts are not faithful in
general (\ie the higher extension groups computed in the hearts are
different from the higher extension groups computed in the ambient
triangulated categories). They are, however, faithful if the
homology of $\Gamma$ is concentrated in degree~$0$ and then the
homology of $\Gamma'$ is also concentrated in degree~$0$, as we show
below in theorem~\ref{thm:CY-Jacobi-algebras}.

\begin{lemma} \label{lemma:comparison-of-t-structures}
\begin{itemize}
\item[a)] We have $\Sigma \cd_{\leq 0} \subset
\cd'_{\leq 0} \subset \cd_{\leq 0}$.
\item[b)] Let $\cf\subset\ca$ be the subcategory of modules $M$ supported at
$i$ and $\ct\subset\ca$ the left orthogonal subcategory of $\cf$.
Then an object $X$ of $\cd(\Gamma)$ belongs to $\cd'_{\leq 0}$
(respectively $\cd'_{>0}$) iff
$H^n(X)$ vanishes in all degrees $n>0$ and $H^0(X)$ lies in $\ct$
(respectively $H^n(X)$ vanishes in all degrees $n<0$ and $H^0(X)$
lies in $\cf$).
\item[c)] The pair $(\cf,\ct)$ is a torsion pair, \ie  $\cf$ is the right
orthogonal of $\ct$ and $\ct$ the left orthogonal of $\cf$.
\end{itemize}
\end{lemma}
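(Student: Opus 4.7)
The plan is to establish (a) directly by computing the images of the projective generators under $F$, then to deduce parts (b) and (c) from the two key identifications $\ca\cap\cd'_{>0}=\cf$ and $\ca\cap\cd'_{\leq 0}=\ct$ via the standard tilting mechanism of Happel--Reiten--Smal\o.

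For (a), the inclusion $\cd'_{\leq 0}\subset\cd_{\leq 0}$ reduces, since $\cd_{\leq 0}^{\Gamma'}$ is generated by the $P'_j$ as a left aisle closed under coproducts, to checking that each $F(P'_j)$ lies in $\cd_{\leq 0}$. For $j\neq i$ this is $P_j$; for $j=i$ it is $T_i$, and the long exact sequence of the defining triangle $P_i\to\bigoplus_\alpha P_{t(\alpha)}\to T_i\to\Sigma P_i$ shows $H^n(T_i)=0$ for $n>0$. For the inclusion $\Sigma\cd_{\leq 0}\subset\cd'_{\leq 0}$, the analogous generation argument reduces matters to showing $\Sigma P_j\in\cd'_{\leq 0}$ for every $j$. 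For $j\neq i$ we have $\Sigma P_j=F(\Sigma P'_j)$ with $\Sigma P'_j$ in the canonical aisle of $\cd(\Gamma')$. For $j=i$, rotating the triangle above gives $T_i\to\Sigma P_i\to\bigoplus_\alpha\Sigma P_{t(\alpha)}\to\Sigma T_i$, which exhibits $\Sigma P_i$ as an extension of objects already in $\cd'_{\leq 0}$; here we use condition (c1) so that $t(\alpha)\neq i$.

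The crucial step is the identification $\ca\cap\cd'_{>0}=\cf$. Since $\cd'_{>0}$ is the right orthogonal of $\cd'_{\leq 0}$ and the latter is generated as an aisle by the $F(P'_j)$, a module $M\in\ca$ lies in $\cd'_{>0}$ precisely when $\Hom(F(P'_j),\Sigma^{-n}M)=0$ for all $j$ and all $n\geq 0$. Testing against $F(P'_j)=P_j$ for $j\neq i$ forces $Me_j=0$, so $M$ is supported at $i$. Conversely, once $M$ is supported at $i$, applying $\Hom(-,\Sigma^{-n}M)$ to the defining triangle of $T_i$ and using $t(\alpha)\neq i$ makes $\Hom(T_i,\Sigma^{-n}M)$ automatically vanish. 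The dual identification $\ca\cap\cd'_{\leq 0}=\ct$ follows from this: for $M\in\ca$ and any $N\in\cd'_{>0}\subset\cd_{\geq 0}$, the truncation triangle for $N$ combined with $M\in\cd_{\leq 0}$ collapses the mapping space to $\Hom_\ca(M,H^0(N))$, so $M\in\cd'_{\leq 0}$ is equivalent to $\Hom_\ca(M,\cf)=0$, that is, to $M\in{}^\perp\cf=\ct$.

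Parts (b) and (c) then follow formally. For (b), the canonical truncation triangle $\tau_{\leq-1}X\to X\to H^0(X)\to\Sigma\tau_{\leq-1}X$ together with $\tau_{\leq-1}X\in\Sigma\cd_{\leq 0}\subset\cd'_{\leq 0}$ from (a) shows that $X\in\cd'_{\leq 0}$ iff $H^n(X)=0$ for $n>0$ and $H^0(X)\in\ca\cap\cd'_{\leq 0}=\ct$; the dual argument using $H^0(X)\to X\to\tau_{\geq 1}X$ and $\tau_{\geq 1}X\in\cd_{\geq 1}\subset\cd'_{>0}$ handles $\cd'_{>0}$. For (c), the relations $\Hom(\ct,\cf)=0$ and $\ct={}^\perp\cf$ are built into the definition; the remaining equality $\cf=\ct^\perp$ follows from the canonical $\cd'$-truncation triangle $\tau'_{\leq 0}M\to M\to \tau'_{>0}M$ applied to an arbitrary $M\in\ca$, which by (b) and the cohomology long exact sequence collapses to a short exact sequence $0\to H^0(\tau'_{\leq 0}M)\to M\to H^0(\tau'_{>0}M)\to 0$ in $\ca$ with torsion part in $\ct$ and torsion-free part in $\cf$. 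The main obstacle is establishing $\ca\cap\cd'_{>0}=\cf$: this is the bridge between the abstractly mutated $t$-structure and the combinatorial description of $\cf$ as modules supported at $i$, and everything else is bookkeeping with truncation triangles.
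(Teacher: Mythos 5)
Your proof is correct and follows essentially the same route as the paper's, which merely records the characterization of $\cd'_{\leq 0}$ by the vanishing of $\Hom(FP'_j,\Sigma^n X)$ for all $n>0$ and all $j$ and declares parts a), b), c) "easy to check"; you have simply supplied the omitted details (aisle generation by the $FP'_j$, the computations against $P_j$ and $T_i$ using (c1), and the truncation-triangle bookkeeping). The only point left implicit is that $H^0(N)\in\cf$ for every $N\in\cd'_{>0}$, which your backward implication in $\ca\cap\cd'_{\leq 0}=\ct$ needs, but this follows at once from closure of the co-aisle under $\Sigma^{-1}$ and extensions applied to the triangle $\Sigma^{-1}\tau_{>0}N\to H^0(N)\to N$.
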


\begin{proof} An object $X$ of $\cd(\Gamma)$ belongs to
  $\cd_{\leq 0}'$ iff it satisfies $\Hom(FP'_j,\Sigma^n X)=0$ for all
  $n>0$ and all $1\leq j\leq n$.  Using this characterization, parts
  a) and b) are easy to check. Also part c) is easy to check using
  that $\cd'_{\leq 0}$ and $\cd'_{>0}$ are the two aisles of a
  $t$-structure.
\end{proof}

\begin{corollary} \label{cor:new-t-structure}
  Let $\ca'$ be the heart of the
  $t$-structure $(\cd'_{\leq 0},\cd'_{\geq 0})$. Then an object $X$ of
  $\cd(\Gamma)$ belongs to $\ca'$ if and only if $H^n(X)=0$ for
  $n\neq 0,-1$, $H^0(X)$ belongs to $\ct$ and $H^{-1}(X)$ belongs to
  $\cf$.
\end{corollary}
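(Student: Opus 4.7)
The plan is to unwind the definition of the heart $\ca'$ of the $t$-structure $(\cd'_{\leq 0},\cd'_{>0})$ and then apply the homological description of the two aisles given in Lemma~\ref{lemma:comparison-of-t-structures} b). By the standard conventions, the heart of the $t$-structure is the intersection
\[
\ca' \;=\; \cd'_{\leq 0} \,\cap\, \Sigma\cd'_{>0},
\]
so membership $X \in \ca'$ is equivalent to the conjunction of $X \in \cd'_{\leq 0}$ and $\Sigma^{-1}X \in \cd'_{>0}$.

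Next, I apply part b) of the preceding lemma twice. The first condition, $X \in \cd'_{\leq 0}$, translates directly into $H^n(X)=0$ for all $n>0$ together with $H^0(X)\in\ct$. For the second condition, I use the identity $H^n(\Sigma^{-1}X) = H^{n-1}(X)$: the lemma says that $\Sigma^{-1}X \in \cd'_{>0}$ iff $H^n(\Sigma^{-1}X)=0$ for $n<0$ and $H^0(\Sigma^{-1}X)\in \cf$, which is exactly $H^n(X)=0$ for all $n<-1$ and $H^{-1}(X)\in\cf$. Conjoining the two yields precisely the characterization of the corollary: $H^n(X)=0$ for $n \neq 0,-1$, $H^0(X)\in\ct$, and $H^{-1}(X)\in \cf$. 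The converse direction is read off from the same application of Lemma~\ref{lemma:comparison-of-t-structures} b) in the other direction, so the equivalence is complete.

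Since all the real work has already been carried out in the preceding lemma, there is no genuine obstacle here: the corollary is a purely formal bookkeeping translation of the $t$-structure description into cohomological language. The only minor point requiring attention is the shift convention, namely that one must shift by $\Sigma$ (rather than $\Sigma^{-1}$) when passing from the right aisle $\cd'_{>0}$ to the cohomologically non-negative part whose intersection with the left aisle forms the heart $\ca'$; tracking this shift correctly is what produces the asymmetric-looking conditions that $H^0(X)$ lies in the torsion class $\ct$ while $H^{-1}(X)$ lies in the torsion-free class $\cf$.
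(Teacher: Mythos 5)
Your proof is correct and is exactly the argument the paper leaves implicit: the corollary is stated without proof as an immediate consequence of Lemma~\ref{lemma:comparison-of-t-structures}~b), obtained by writing $\ca'=\cd'_{\leq 0}\cap\Sigma\cd'_{>0}$ and translating both membership conditions via $H^n(\Sigma^{-1}X)=H^{n-1}(X)$, just as you do. Your handling of the shift convention is consistent (it reproduces the usual heart for the canonical $t$-structure), so nothing is missing.
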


\section{Stability under mutation of Ginzburg algebras\\ concentrated
in degree~$0$} \label{s:stability-under-mutation}

\subsection{The statement}
Let $k$ be a field. Let $Q$ be a non-empty finite quiver and $W$ a
potential in $\hat{kQ}$. The quiver $Q$ may contain loops and
$2$-cycles but we assume that $W$ is reduced (\ie no paths of
length $\leq 1$ occur in the relations deduced from $W$). Let
$\Gamma$ be the complete Ginzburg algebra associated with $(Q,W)$
and put $A=H^0(\Gamma)$.

For each vertex $i$ of $Q$, we denote by $S_i$ the associated simple
module and by $P_i = e_i A$ its projective cover.
Fix a vertex $i$ of $Q$ and consider the complex $T'$ which is the
sum of the $P_j$, $j\neq i$, concentrated in degree~$0$ and of the
complex
\[
\xymatrix{0 \ar[r] & P_i \ar[r]^f & B \ar[r] & 0}
\]
where $P_i$ is in degree $-1$, $B$ is the sum over the
arrows $\alpha$ starting in $i$ of the $P_{t(\alpha)}$
and the components of $f$ are the left multiplications
by the corresponding arrows.

\begin{theorem} \label{thm:CY-Jacobi-algebras}
Suppose that the complete Ginzburg algebra $\Gamma=\hat\Gamma(Q,W)$ has its
homology concentrated in degree~$0$. If the simple module $S_i$
corresponding to the vertex $i$ is spherical (\ie there are no
loops at $i$ in $Q$), then $T'$ is a tilting object in the perfect
derived category $\per(A)$. Thus, under the assumptions (c1) (c2)
and (c3), the complete Ginzburg algebra $\Gamma'$ associated with
the mutated quiver with potential $\mu_i(Q,W)$ still has its
homology concentrated in degree~$0$ and the Jacobian algebras $A$ and $A'$
associated with $(Q,W)$ and $\mu_i(Q,W)$ are derived equivalent.
\end{theorem}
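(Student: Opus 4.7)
The plan is to show $T'$ is a tilting object in $\per(A)$; the remaining conclusions of the theorem then follow formally. By Theorem~\ref{T:mainthm} applied under the hypothesis $\Gamma\simeq A$, one gets a triangle equivalence $F:\cd(\Gamma')\to\cd(A)$ sending $\Gamma'$ to $T'$, which identifies $H^*(\Gamma')$ with $\End_{\cd(A)}^*(T')$ as graded algebras; the tilting condition then forces $H^{\neq 0}(\Gamma')=0$, and by the classical Morita theorem for compact tilting objects it yields $\per(A)\simeq\per(\End_{\cd(A)}(T'))=\per(A')$.

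The crucial step is to exploit $H^*(\Gamma)$ being concentrated in degree~$0$ to produce a concrete projective $A$-resolution of $S_i$. First I would revisit the cofibrant resolution $\mathbf{p}S_i$ of $S_i$ as a dg $\Gamma$-module (constructed in the proof of Lemma~\ref{L:imageofsimples}): it is a four-term complex of projective $\Gamma$-modules in degrees $-3,\ldots,0$ whose differential involves left multiplication by $\rho$, $\tau$, $\rho^*$, $\tau^*$, $t_i$ and $\partial_{\rho\tau}W$. Since $\Gamma\to A$ is a quasi-isomorphism and $\mathbf{p}S_i$ is cofibrant, $\mathbf{p}S_i\otimes_\Gamma A$ is quasi-isomorphic to $S_i$; the generators $\rho^*$, $\tau^*$, $t_i$ die in passing to $A$ (they lie in strictly negative degrees of $\Gamma$), leaving the projective $A$-resolution
\[
0\to P_i\xrightarrow{(\rho)} B\xrightarrow{(-\partial_{\rho\tau}W)} \bigoplus_{\tau:t(\tau)=i} P_{s(\tau)}\xrightarrow{(\tau)} P_i\to S_i\to 0.
\]
In particular the leftmost differential $f:P_i\to B$ is injective, so $T_i=\mathrm{Cone}(f)$ is quasi-isomorphic to the $A$-module $B/\im f$; hence $T'$ lies in $\mod A\subset\cd(A)$ and its self-Ext can be computed by ordinary module-theoretic means.

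Checking the two tilting conditions is then direct. Generation follows from the spherical hypothesis: no loops at $i$ means every $t(\alpha)\neq i$, so $B$ is a sum of summands of $T'$ and the triangle $P_i\to B\to T_i\to P_i[1]$ places $P_i$ in the thick subcategory generated by $T'$. For vanishing of $\Ext^n_A(T',T')$ with $n\neq 0$: negative $\Ext$'s vanish since $T'$ is a module; projective-projective pieces vanish trivially; and the pieces involving $T_i$ are handled by applying $\Hom(-,M)$ or $\Hom(M,-)$ to the short exact sequence $0\to P_i\to B\to T_i\to 0$. The $\Ext^{\geq 2}$ terms vanish because $P_i$ and $B$ are projective, and $\Ext^1$ reduces to the cokernel of $\Hom(B,M)\to\Hom(P_i,M)$ for $M=P_j$ ($j\neq i$) or $M=T_i$. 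For $M=P_j$ this cokernel is $e_jAe_i\big/\sum_\alpha e_jAe_{t(\alpha)}\alpha$, which vanishes because every path from $i$ to $j\neq i$ factors through some arrow $\alpha:i\to t(\alpha)$ out of $i$ (again using no loops at $i$); for $M=T_i$ the same factorization, pushed down through $B\twoheadrightarrow T_i$, gives $T_ie_i=\sum_\alpha T_ie_{t(\alpha)}\alpha$. The main obstacle I anticipate is the first step, producing the four-term projective $A$-resolution of $S_i$ from the cofibrant dg resolution and concluding that $f$ is injective; once this is in place, the rest is essentially bookkeeping with the no-loops factorization of paths through~$i$.
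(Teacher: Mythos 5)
Your proof is correct, and its decisive step --- obtaining the four-term complex of projective $A$-modules $0\to P_i\to B\to B'\to P_i\to 0$ quasi-isomorphic to $S_i$ by applying $?\lten_\Gamma A$ to the cofibrant resolution $\bp S_i$ and reading off the injectivity of $f$ --- is exactly the argument of the paper. Where you diverge is in how the tilting conditions are then verified. The paper feeds the three facts you establish ($B\in\add(\bigoplus_{j\neq i}P_j)$ by the absence of loops at $i$; $f$ a left approximation, via the factorization of paths leaving $i$; $f_*$ injective, via injectivity of $f$) into the general Proposition~\ref{proposition:tilting}, whose case analysis works with triangles in an arbitrary triangulated category and never uses that $T_i$ is a module. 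You instead exploit the injectivity of $f$ to identify $T_i$ with the module $\cok(f)$ of projective dimension $\leq 1$ and compute the $\Ext$-groups directly; this is more elementary and makes the role of the spherical hypothesis very concrete (it is what gives $e_jAe_i=\sum_\alpha e_jAe_{t(\alpha)}\alpha$ for $j\ne i$ and likewise $T_ie_i=\sum_\alpha T_ie_{t(\alpha)}\alpha$, killing the relevant $\Ext^1$'s), but it is tied to the present situation, whereas the paper's proposition is stated so as to be reusable in other triangulated categories. Since both arguments rest on the same three inputs and the same key computation of $\bp S_i\ten_\Gamma A$, the difference is one of packaging rather than substance.
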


Using the theorem and corollary~\ref{cor:new-t-structure}
we obtain the following corollary.

\begin{corollary} Under the hypothesis of the theorem,
the category $\Mod J(\mu_i(Q,W))$ is obtained by tilting, in the
sense of \cite{HappelReitenSmaloe96} \cite{Bridgeland05}, from the
category $\Mod J(Q,W)$ at the simple module $S_i$.
\end{corollary}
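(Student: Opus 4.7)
The plan is to deduce the corollary almost directly from the theorem together with Corollary~\ref{cor:new-t-structure}, by transporting $t$-structures. Under the hypothesis, both $\Gamma$ and $\Gamma'$ have homology concentrated in degree~$0$ (the latter by the theorem). Since Ginzburg dg algebras are concentrated in non-positive degrees, the canonical projections $\Gamma\to H^0(\Gamma)=A=J(Q,W)$ and $\Gamma'\to H^0(\Gamma')=A'=J(\mu_i(Q,W))$ are quasi-isomorphisms of dg algebras. Hence the derived categories $\cd(\Gamma)$ and $\cd(A)$ are triangle equivalent, and similarly for $\Gamma'$ and $A'$; under these equivalences the canonical $t$-structure of Lemma~\ref{lemma:canonical-t-structure} corresponds to the standard $t$-structure on $\cd(A)$ (respectively $\cd(A')$), whose heart is $\Mod A$ (respectively $\Mod A'$).

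Next, I would invoke the main theorem~\ref{T:mainthm} to obtain the triangle equivalence $F:\cd(\Gamma')\iso\cd(\Gamma)$. Composing with the above identifications yields a triangle equivalence $\cd(A')\iso\cd(A)$. Transporting the standard $t$-structure on $\cd(A')$ through $F$ gives the $t$-structure $(\cd'_{\leq 0},\cd'_{\geq 0})$ of section~\ref{ss:comparison-of-t-structures}, so that $F$ restricts to an equivalence of abelian categories $\Mod A'\iso\ca'$. By Corollary~\ref{cor:new-t-structure}, the heart $\ca'$ consists precisely of those $X\in\cd(\Gamma)\simeq\cd(A)$ with $H^n(X)=0$ for $n\neq 0,-1$, with $H^0(X)\in\ct$ and $H^{-1}(X)\in\cf$. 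This is by definition the heart obtained from $\ca=\Mod A$ by Happel--Reiten--Smal\o\ / Bridgeland tilting at the torsion pair $(\ct,\cf)$, where torsionness was verified in part~c) of Lemma~\ref{lemma:comparison-of-t-structures}.

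It remains to identify $(\ct,\cf)$ with the torsion pair associated to the simple $S_i$. By definition $\cf$ is the full subcategory of $\ca=\Mod J(Q,W)$ of modules supported at $i$. Since $S_i$ is spherical there are no loops at $i$, so $e_iAe_i$ has no arrows in it and $S_i$ has no self-extensions inside $\cf$; hence $\cf$ coincides with the subcategory of modules all of whose composition factors are isomorphic to $S_i$, which is the smallest Serre subcategory containing $S_i$. Its left orthogonal $\ct$ is then the subcategory of modules with no submodule isomorphic to $S_i$. This is precisely the torsion pair used by HRS/Bridgeland for tilting at the simple $S_i$, so the derived equivalence $F$ exhibits $\Mod J(\mu_i(Q,W))$ as the tilt of $\Mod J(Q,W)$ at $S_i$.

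The only subtle point is the identification of $\cf$ with the Serre subcategory generated by $S_i$; this uses sphericity of $S_i$ (no loops) to rule out extensions by $S_i$ that could exist if $e_i A e_i$ had nontrivial structure, so that support at $i$ is equivalent to being a (possibly infinite) successive extension of copies of $S_i$. The rest of the argument is essentially formal: everything follows from the transport of $t$-structures under the triangle equivalence provided by the main theorem, combined with the description of the transported heart already obtained in Corollary~\ref{cor:new-t-structure}.
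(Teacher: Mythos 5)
Your proof is correct and follows the same route as the paper, whose entire proof is the remark that the corollary follows from Theorem~\ref{thm:CY-Jacobi-algebras} together with Corollary~\ref{cor:new-t-structure}: your expansion — using the quasi-isomorphisms $\Gamma\to J(Q,W)$ and $\Gamma'\to J(\mu_i(Q,W))$ to identify the ambient derived categories with $\cd(\Mod J(Q,W))$ and $\cd(\Mod J(\mu_i(Q,W)))$ — is exactly the faithfulness point that lets one recognize the heart of Corollary~\ref{cor:new-t-structure} as an honest Happel--Reiten--Smal\o\ tilt of $\Mod J(Q,W)$. One slip, inconsequential for the argument: the left orthogonal $\ct$ of $\cf$ consists of the modules with no \emph{quotient} isomorphic to $S_i$, since a nonzero morphism $T\to S_i$ realizes $S_i$ as a quotient of $T$; ``no submodule isomorphic to $S_i$'' describes the right orthogonal instead. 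Nothing else is affected, because your argument only uses the torsion-pair property from Lemma~\ref{lemma:comparison-of-t-structures}~c) together with the identification of $\cf$ (modules supported at $i$) with the direct sums of copies of $S_i$, which is where sphericity of $S_i$ enters.
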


Let us prove the theorem.

\begin{proof} We would like to use proposition~\ref{proposition:tilting}
  below and have to check conditions 1), 2) and 3). Condition 1) holds
  since $B$ belongs to $\cb$, the additive closure of the $P_j$, $j\neq i$. Condition 2) holds since $f: P_i \to B$
  is a left $\cb$-approximation.  Finally, in order to show condition
3), it suffices to show that $f$ is injective. This can be deduced
from Ginzburg's results \cite{Ginzburg06} but we can also show it as
follows: Since the homology of $\Gamma$ is concentrated in
degree~$0$, the functor $?\lten_\Gamma A$ is an equivalence from
$\per(\Gamma)$ to $\per(A)$ whose inverse is given by the
restriction along the projection morphism $\Gamma \to H^0\Gamma =
A$. If we apply the equivalence $?\lten_\Gamma A$ to the cofibrant
resolution $\bp S_i$ constructed in
section~\ref{ss:cofibrant-resolutions}, we obtain the complex of
projective $A$-modules
\[
\xymatrix{0 \ar[r] & P_i \ar[r]^{f} & B \ar[r] &  B' \ar[r] &
P_i \ar[r] & 0} \ko
\]
as we see by using the explicit description of $\bp S_i$ given at
the beginning of the proof of lemma~\ref{L:imageofsimples}.
The image of this complex under the restriction along
$\Gamma \to A$ is again quasi-isomorphic to $S_i$. So the
complex itself is quasi-isomorphic to $S_i$. In particular,
the map $f$ is injective.
\end{proof}

\subsection{Tilts of tilting objects in triangulated categories}
The following lemma is well-known to the experts in tilting theory
although it is not easy to point to a specific reference. One could
cite Auslander-Platzeck-Reiten \cite{AuslanderPlatzeckReiten79},
Riedtmann-Schofield \cite{RiedtmannSchofield91}, Happel-Unger
\cite{HappelUnger05}, Buan-Marsh-Reineke-Reiten-Todorov
\cite{BuanMarshReinekeReitenTodorov06}, Geiss-Leclerc-Schr\"oer
\cite{GeissLeclercSchroeer06} and many others.

Let $\ct$ be a triangulated category with suspension functor
$\Sigma$. Let $T$ be a {\em tilting object in $\ct$}, \ie
\begin{itemize}
\item[a)] $\ct$ coincides with the closure of $T$ under taking
suspensions, desuspensions, extensions and direct factors and
\item[b)] we have $\ct(T,\Sigma^n T)=0$ for all integers $n\neq 0$.
\end{itemize}
Assume that we are given a decomposition
\[
T=T_0 \oplus T_1.
\]
Let $\add(T_1)$ denote the closure of $T_1$ under
taking finite direct sums and direct summands.
Assume that there exists a map $f: T_0 \to B$ such that
\begin{itemize}
\item[1)] $B$ belongs to $\add(T_1)$;
\item[2)] the map $f^*: \ct(B,T_1) \to \ct(T_0, T_1)$ is surjective and
\item[3)] the map $f_*: \ct(T_1, T_0) \to \ct(T_1, B)$ is injective.
\end{itemize}
Choose a triangle
\begin{equation} \label{eq:triangle}
\xymatrix{ T_0 \ar[r]^f & B \ar[r]^g & T_0^* \ar[r]^-h & \Sigma T_0}.
\end{equation}
\begin{proposition} \label{proposition:tilting}
The object $T'=T_0^*\oplus T_1$ is a tilting object in $\ct$.
\end{proposition}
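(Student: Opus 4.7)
The plan is to verify the two axioms of a tilting object separately for $T'=T_0^*\oplus T_1$: generation, and vanishing of self-extensions in nonzero degrees. Generation is immediate from the defining triangle: both $B\in\add(T_1)\subseteq\add(T')$ and $T_0^*\in\add(T')$ lie in the thick subcategory generated by $T'$, so the triangle exhibits $T_0$ in this subcategory as well, whence $T=T_0\oplus T_1$ is in the closure of $T'$ under suspensions, desuspensions, extensions and direct factors; since $T$ generates $\ct$, so does $T'$. All the work therefore goes into the Hom-vanishing, which splits into the four cases according to whether each factor is $T_1$ or $T_0^*$.

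For $\ct(T_1,\Sigma^n T_1)$, there is nothing to show, as $T$ is already tilting. For the remaining three cases I will feed the triangle (\ref{eq:triangle}) into a Hom-functor and read off the long exact sequence. Applying $\ct(-,\Sigma^n T_1)$ gives
\[
\ct(T_0,\Sigma^{n-1}T_1)\to\ct(T_0^*,\Sigma^n T_1)\to\ct(B,\Sigma^n T_1)\xrightarrow{f^*}\ct(T_0,\Sigma^n T_1).
\]
Since $B\in\add(T_1)$ and $T$ is tilting, both outer terms vanish whenever $n\geq 2$ or $n\leq -1$; in the critical case $n=1$, the left term $\ct(T_0,T_1)$ is killed by surjectivity of $f^*$ (hypothesis 2), yielding $\ct(T_0^*,\Sigma^n T_1)=0$ for all $n\neq 0$. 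Dually, applying $\ct(T_1,-)$ to the triangle and using that $f_*$ is injective (hypothesis 3) in the critical case $n=-1$ yields $\ct(T_1,\Sigma^n T_0^*)=0$ for all $n\neq 0$.

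Finally, for $\ct(T_0^*,\Sigma^n T_0^*)$, I apply $\ct(T_0^*,-)$ to the triangle. The previous step gives $\ct(T_0^*,\Sigma^n B)=0$ for $n\neq 0$. Applying $\ct(-,\Sigma^n T_0)$ to the triangle and using $\ct(B,\Sigma^n T_0)=0$ together with the tilting property of $T$ yields $\ct(T_0^*,\Sigma^n T_0)=0$ for $n\notin\{0,1\}$ (with the $n=1$ case computed via the previously obtained vanishing of $\ct(T_0^*,\Sigma^2 T_0)$); this suffices to squeeze $\ct(T_0^*,\Sigma^n T_0^*)$ between two zeros in the long exact sequence for all $n\neq 0$.

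I expect the main obstacle to be keeping track of the boundary cases $n=\pm 1$, where the naive vanishings from $T$ being tilting fail and one genuinely needs hypotheses 2) and 3) to cut off the connecting maps; once these two cases are handled, the remaining degrees collapse automatically. One then concludes that $T'$ satisfies both defining conditions (a) and (b), hence is tilting.
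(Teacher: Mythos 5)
Your generation argument and your treatment of the mixed groups $\ct(T_0^*,\Sigma^n T_1)$ and $\ct(T_1,\Sigma^n T_0^*)$ are correct and agree in substance with the paper's Cases 4--7: the long exact sequences reduce everything to the boundary degrees, where hypothesis 2) kills $\ct(T_0^*,\Sigma T_1)$ and hypothesis 3) kills $\ct(T_1,\Sigma^{-1}T_0^*)$. The problem is in your final step. Applying $\ct(T_0^*,-)$ to the triangle places $\ct(T_0^*,\Sigma^n T_0^*)$ between $\ct(T_0^*,\Sigma^n B)$ and $\ct(T_0^*,\Sigma^{n+1}T_0)$; you have shown the first vanishes for $n\neq 0$ and the second for $n+1\notin\{0,1\}$. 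This covers every $n\neq 0$ \emph{except} $n=-1$: there the right-hand flanking term is $\ct(T_0^*,T_0)$, which has no reason to vanish (already in the Bernstein--Gelfand--Ponomarev situation there are nonzero maps from the reflected object back to $T_0$). So the claim that the squeeze works ``for all $n\neq 0$'' fails precisely at $\ct(T_0^*,\Sigma^{-1}T_0^*)$. This is the paper's Case 2, the one place where hypothesis 3) must enter the computation of the self-extensions of $T_0^*$, and your argument never invokes it there.

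The gap is fillable with what you already have, but it requires switching the variance: apply $\ct(-,\Sigma^{-1}T_0^*)$ to the triangle instead. The flanking terms are then $\ct(\Sigma T_0,\Sigma^{-1}T_0^*)=\ct(T_0,\Sigma^{-2}T_0^*)$, which vanishes by one more application of the covariant sequence (it sits between $\ct(T_0,\Sigma^{-2}B)=0$ and $\ct(T_0,\Sigma^{-1}T_0)=0$), and $\ct(B,\Sigma^{-1}T_0^*)$, which vanishes because $B\in\add(T_1)$ and you have already proved $\ct(T_1,\Sigma^{-1}T_0^*)=0$ from hypothesis 3). Alternatively, one can show directly that $f_*\colon\ct(T_0^*,T_0)\to\ct(T_0^*,B)$ is injective, since $g^*$ embeds these groups into $\ct(B,T_0)$ and $\ct(B,B)$ respectively and hypothesis 3) gives injectivity there. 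The paper takes a slightly different route for the two hard degrees, identifying $\ct(T_0^*,\Sigma^{\pm 1}T_0^*)$ with homotopy classes of maps from the two-term complex $T_0\to B$ to its shifts, which makes the roles of hypotheses 2) and 3) transparent; your approach is equivalent once the $n=-1$ case is repaired.
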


\begin{proof} It is clear from the triangle (\ref{eq:triangle}) that $T'$ still
generates $\ct$ in the sense of condition a). It remains to be checked that there are
no morphisms between non trivial shifts of the two given summands of $T'$. We
distinguish several cases:

{\em Case 1: We have $\ct(T_0^*, \Sigma T_0^*)=0$.} Consider the triangles
\begin{align*}
& \xymatrix{
B \ar[r]^g & T_0^*  \ar[r]^h & \Sigma T_0 \ar[r]^{-\Sigma f} & \Sigma B}\ko \\
& \xymatrix{\Sigma B \ar[r]_{\Sigma g} & \Sigma T_0^* \ar[r]_{\Sigma h} & \Sigma^2 T_0 \ar[r] & \Sigma^2 B.
}
\end{align*}
Using condition b) and the long exact sequences associated with these
triangles we obtain the following diagram whose first two rows and
last column are exact (we write $(X,Y)$ instead of $\ct(X,Y)$)
\[
\xymatrix{
(\Sigma B, \Sigma T_0) \ar[r] \ar[d] & (\Sigma B, \Sigma B) \ar[r] \ar[d] & (\Sigma B, \Sigma T_0^*)\ar[d] \ar[r] & 0 \\
(\Sigma T_0, \Sigma T_0) \ar[r] & (\Sigma T_0, \Sigma B) \ar[r] & (\Sigma T_0, \Sigma T_0^*) \ar[r] \ar[d] & 0 \\
 & & (T_0^*, \Sigma T_0^*) \ar[d] \\
 & &   0.}
 \]
This implies that the space of morphisms $(T_0^*, \Sigma T_0^*)$ is
isomorphic to the space of maps up to homotopy from the complex $T_0
\to B$, with $T_0$ in degree~$0$, to its shift by one degree to the
left. Clearly conditions 1) and 2) suffice for this space to vanish.

{\em Case 2: We have $\ct(T_0^*, \Sigma^{-1}T_0^*)=0$.} Consider the triangles
\begin{align*}
& \xymatrix{
B \ar[r]^g & T_0^* \ar[r]^h & \Sigma T_0 \ar[r]^{-\Sigma f} & \Sigma B } \\
& \xymatrix{\Sigma^{-1} B \ar[r]_{\Sigma^{-1} g} & \Sigma^{-1} T_0^* \ar[r]_{\Sigma^{-1} h} &  T_0 \ar[r]_f & B.}
\end{align*}
By using condition b) and the long exact sequences associated with these triangles
we obtain the following commutative diagram whose first column and two last rows
are exact
\[
\xymatrix{
 & 0 \ar[d] & & \\
 & (T_0^*, \Sigma^{-1} T_0^*) \ar[d] & & \\
0 \ar[r] & (B, \Sigma^{-1} T_0^*) \ar[d] \ar[r] & (B,T_0) \ar[d] \ar[r] & (B,B) \ar[d] \\
0 \ar[r] & (T_0, \Sigma^{-1} T_0^*) \ar[r] & (T_0, T_0) \ar[r] & (T_0, B).
}
\]
This shows that the space of morphisms $(T_0^*, \Sigma^{-1} T_0^*)$
is isomorphic to the space of maps up to homotopy from the complex
$T_0 \to B$, with $T_0$ in degree~$0$, to its shift by one degree to
the right. Clearly conditions~2) and 3) suffice to imply the
vanishing of this space.

{\em Case 3: We have $\ct(T_0^*, \Sigma^n T_0^*)=0$ for all integers
$n$ different from $-1$, $0$ and $1$.} This follows easily by
considering the long exact sequences associated with the triangle
(\ref{eq:triangle}).

{\em Case 4: We have $\ct(T_0^*, \Sigma^n T_1)=0$ for all integers
$n$ different from $0$ and $1$.} Again this is easy.

{\em Case 5: We have $\ct(T_0^*, \Sigma T_1)=0$.} This follows if we
apply $\ct(?,\Sigma T_1)$ to the triangle
\[
\xymatrix{ B \ar[r]^g & T_0^* \ar[r]^h & \Sigma T_0 \ar[r]^{-\Sigma
f} & \Sigma B}
\]
and use condition 2).

{\em Case 6: We have $\ct(T_1, \Sigma^n T_0^*)=0$ for $n\neq 0$.} This follows
if we apply $\ct(T_1, ?)$ to the triangle
\[
\xymatrix{ \Sigma^n B \ar[r]^{\Sigma^n f} & \Sigma^n T_0^* \ar[r]^{\Sigma^n h} &
\Sigma^{n+1} T_0 \ar[r] & \Sigma^{n+1} B}
\]
and use condition 3) in the case where $n=-1$.

{\em Case 7: We have $\ct(T_1, \Sigma^n T_1)=0$.} This is clear
since $T$ is tilting.

\end{proof}

\section{Appendix: Pseudocompact dg algebras and derived categories\\
by Bernhard Keller} \label{s:pseudocompact}

\subsection{Motivation} As pointed out by D.~Simson \cite{Simson07a},
the Jacobian algebra of a quiver with potential is a topological
algebra in a natural way and it is natural and useful to take this
structure into account and to consider (suitable) topological
modules (or dually, to consider coalgebras and comodules, \confer
for example \cite{Simson07}). It is then necessary to consider
derived categories of topological modules as well. This allows one,
for example, to extend the results of Amiot \cite{Amiot08a} from
potentials belonging to $kQ$ to potentials belonging to the
completion $\hat{kQ}$. We briefly recall some classical facts on the
relevant class of topological algebras and modules: the
pseudocompact ones. We show their usefulness in reconstructing
Jacobian algebras from their categories of finite-dimensional
modules. Then we adapt the notion of pseudocompact module to the dg
setting and show that mutation also yields equivalences between
pseudocompact derived categories.

\subsection{Reminder on pseudocompact algebras and modules}
\label{ss:reminder-pseudocompact} Let $k$ be a field and $R$ a
finite-dimensional separable $k$-algebra (\ie $R$ is projective as a
bimodule over itself). By an {\em $R$-algebra}, we mean an algebra
in the monoidal category of $R$-bimodules. Following chapter~IV,
section~3 of \cite{Gabriel62}, we call an $R$-algebra $A$ {\em
pseudocompact} if it is endowed with a linear topology for which it
is complete and separated and admits a basis of neighborhoods of
zero formed by left ideals $I$ such that $A/I$ is finite-dimensional
over $k$. Let us fix a pseudocompact $R$-algebra $A$. A right
$A$-module $M$ is {\em pseudocompact} if it is endowed with a linear
topology admitting a basis of neighborhoods of zero formed by
submodules $N$ of $M$ such that $M/N$ is finite-dimensional over
$k$. A {\em morphism} between pseudocompact modules is a continuous
$A$-linear map. We denote by $\Pcm(A)$ the {\em category of
pseudocompact right $A$-modules} thus defined. For example, if $A$
equals $R$, then we have the equivalence
\[
M \mapsto \Hom_R(M,R).
\]
from the opposite of the category $\Mod R$ to $\Pcm(A)$. In
particular, since $\Mod R$ is semisimple, so is $\Pcm(R)$.

Recall that a {\em Grothendieck category} is an abelian category
where all colimits exist, where the passage to a filtered colimit is
an exact functor and which admits a generator; a {\em locally finite
category} is a Grothendieck category where each object is the
filtered colimit of its subobjects of finite length.

\begin{theorem}\cite{Gabriel62} \label{thm:pseudocompact-modules}
\begin{itemize}
\item[a)] The category $\Pcm(A)$ is an abelian category and the
forgetful functor $\Pcm(A) \to \Mod R$ is exact.
\item[b)] The opposite category $\Pcm(A)\op$ is a locally finite category.
\end{itemize}
\end{theorem}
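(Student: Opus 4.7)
The plan is to reduce both assertions to the canonical inverse-limit description of pseudocompact modules together with a standard Mittag-Leffler argument. Every pseudocompact right $A$-module $M$ is the topological inverse limit $M = \varprojlim_U M/U$ over the filtered poset of open submodules $U\subset M$ of finite $k$-codimension, with surjective transition maps; a continuous $A$-linear map $M\to M'$ is the same datum as a compatible system of $A$-module morphisms between the finite-dimensional quotients of $M$ and $M'$.

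For part~a), given $f\colon M\to M'$ in $\Pcm(A)$, I would take the kernel to be the closed submodule $f^{-1}(0)$ with its subspace topology (whose open submodules of finite codimension are the intersections with $\ker f$ of open submodules of $M$) and the cokernel to be the quotient $M'/\overline{f(M)}$ by the closure of the image, equipped with the quotient topology. These are pseudocompact, and the universal properties are verified on the inverse-limit presentations, reducing every check to the finite-dimensional case over some $A/I$. The crucial point in a) is the exactness of the forgetful functor $\Pcm(A)\to \Mod R$, which comes down to showing that a continuous surjection $f\colon M\to M'$ of pseudocompact modules is surjective on underlying sets: writing $M=\varprojlim M/U$ and $M'=\varprojlim M'/V$, the map $f$ is induced by surjections between finite-dimensional quotients, and the Mittag-Leffler lemma applied to the resulting inverse system of short exact sequences of finite-dimensional $R$-modules (where Mittag-Leffler holds trivially because the transition maps are surjective) yields surjectivity at the limit and, more generally, exactness of the inverse-limit sequence.

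For part~b), subobjects in $\Pcm(A)\op$ correspond to quotient objects in $\Pcm(A)$, and the inverse-limit decomposition $M=\varprojlim M/U$ exhibits each $M$ as the filtered colimit in $\Pcm(A)\op$ of its finite-dimensional quotients, each of which has finite length; this is the local finiteness condition. For the Grothendieck axioms, all small limits in $\Pcm(A)$ exist and are computed as inverse limits of the underlying vector spaces equipped with the natural pseudocompact topology (products get the product topology, equalizers get the subspace topology), so $\Pcm(A)\op$ has all small colimits. Exactness of filtered colimits in $\Pcm(A)\op$ amounts to exactness of filtered inverse limits in $\Pcm(A)$, which again follows from Mittag-Leffler because the relevant pro-systems of finite-dimensional modules have surjective transition maps. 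A cogenerator of $\Pcm(A)$ (equivalently, a generator of $\Pcm(A)\op$) is provided by the product of the finitely many finite-dimensional simple pseudocompact $A$-modules: each pseudocompact $M$ embeds topologically into a product of such simples built from the composition factors of its finite-dimensional quotients.

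The main technical obstacle is the repeated reliance on the Mittag-Leffler lemma: one must verify that the pro-systems arising from the construction of kernels, cokernels, intersections of open submodules and more general filtered limits all have surjective (or at least Mittag-Leffler) transition maps, so that passing to the inverse limit preserves exactness. Once this is secured, the abelian-category and Grothendieck-category axioms follow formally from their counterparts for finite-dimensional modules, and the theorem reduces to bookkeeping with the inverse-limit presentations.
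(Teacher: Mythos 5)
The paper does not actually prove this theorem: it is quoted from Gabriel's thesis \cite{Gabriel62}, and the text only records that the main ingredient is the Mittag-Leffler lemma --- which is indeed the engine of your argument, so your overall strategy is the intended one. Your part a) is essentially sound, provided you make explicit the one point everything hinges on: the image of a continuous morphism of pseudocompact modules is already closed (it is the inverse limit of a system of finite-dimensional modules with surjective transition maps, hence complete, hence closed in the separated module $M'$). Without this, your cokernel $M'/\overline{f(M)}$ would violate the coimage--image axiom of an abelian category and the forgetful functor would fail to be exact; with it, $\overline{f(M)}=f(M)$ and both claims of a) follow as you describe.

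Part b) contains a genuine gap: the product of the simple pseudocompact $A$-modules is not a cogenerator of $\Pcm(A)$. First, there need not be finitely many simples (take $A=\prod_{i\in\N}k$ with the product topology). More seriously, a pseudocompact module does not in general embed into a product of simples: for $A=k[[x]]$ the unique simple is $k$ with $x$ acting by zero, so $x$ annihilates every product of copies of it, whereas $x$ acts injectively on $A$ itself; hence $A$ admits no such embedding. Only semisimple objects embed into products of simples, so your proposed object cogenerates only when $A$ is semisimple. The correct generator of $\Pcm(A)\op$ is the coproduct of a set of representatives of all finite-length objects --- equivalently, the product in $\Pcm(A)$ of representatives of all isomorphism classes of finite-dimensional pseudocompact modules: every $M$ embeds into $\prod_U M/U$, and each $M/U$ is a direct factor of that product. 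With this replacement, and with the Mittag-Leffler verification of exactness of filtered inverse limits that you outline, part b) goes through.
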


The main ingredient of the proof of the theorem is the following
well-known `Mittag-Leffler lemma'.
\begin{lemma} \label{lemma:Mittag-Leffler} If
\[
\xymatrix{0 \ar[r] & L_i \ar[r] & M_i \ar[r] & N_i \ar[r] & 0 } \ko
i\in I\ko
\]
is a filtered inverse system of exact sequences of vector spaces and
all the $L_i$ are finite-dimensional, then the inverse limit of the
sequence is still exact.
\end{lemma}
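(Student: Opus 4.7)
The plan is to carry out the standard Mittag-Leffler argument. Since the inverse limit of vector spaces is a left-exact functor, exactness of the limit sequence at $\varprojlim L_i$ and at $\varprojlim M_i$ is automatic, so only the surjectivity of $\varprojlim M_i \to \varprojlim N_i$ needs to be proved. Given a compatible family $(n_i)_{i\in I}$, I would form for each $i$ the fibre $X_i=g_i^{-1}(n_i)\subseteq M_i$, where $g_i:M_i\to N_i$ is the given surjection. Each $X_i$ is a non-empty torsor under $L_i$, and the transitions $g_{ij}:M_j\to M_i$ restrict to maps $X_j\to X_i$; thus the goal reduces to showing that the inverse system $(X_i)$ of non-empty sets has non-empty inverse limit.

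The key point is that $(X_i)$ satisfies the Mittag-Leffler condition. Indeed, for $j\geq i$ the image of $X_j\to X_i$ is a coset of the subspace $f_{ij}(L_j)\subseteq L_i$ (where $f_{ij}$ is the transition of the system $(L_i)$), and since $L_i$ is finite-dimensional the descending chain $\{f_{ij}(L_j)\}_{j\geq i}$ of subspaces must stabilize at some $L_i^\ast\subseteq L_i$. Passing to the quotient $L_i/L_i^\ast$, which is finite-dimensional and in particular finite as a set of cosets, one sees that the descending family of cosets $g_{ij}(X_j)\subseteq X_i$ likewise stabilizes at a non-empty coset $X_i^\ast$ of $L_i^\ast$, and the restricted transition maps $X_j^\ast\to X_i^\ast$ are surjective by construction.

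With this in place, a compatible family $(m_i)\in\varprojlim X_i^\ast \subseteq \varprojlim X_i$ is produced by induction: well-order the index set $I$ in a manner refining its partial order, and at each stage extend the compatible choice already made over the earlier indices to include the current index by lifting along the surjection $X_i^\ast\twoheadrightarrow X_{i'}^\ast$ for $i'<i$. The main obstacle is the bookkeeping of this induction when $I$ is not countable; for the applications in the paper, however, the relevant inverse systems are indexed by $\mathbb{N}$ (as in the towers $\Gamma/\m^n$ used throughout), so the induction collapses to the textbook countable Mittag-Leffler construction of successively lifting along a sequence of surjections.
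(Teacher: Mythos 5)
The paper itself gives no proof of this lemma (it is quoted as ``well-known''), so there is nothing to compare against; judged on its own terms, your argument is correct up to, but not including, its last step. The reduction to the surjectivity of $\varprojlim M_i \to \varprojlim N_i$, the passage to the fibres $X_i=g_i^{-1}(n_i)$, and the stabilization of the images $g_{ij}(X_j)$ are all fine. (One small slip there: over an infinite field $L_i/L_i^{\ast}$ is \emph{not} finite as a set, so ``finite as a set of cosets'' is not a valid justification; the stabilization instead follows from the descending chain condition on dimensions of the affine subspaces $g_{ij}(X_j)\subseteq X_i$, or, once the subspaces have stabilized, from the fact that two cosets of the same subspace are equal or disjoint while the family is downward filtered.)

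The genuine gap is the final step. The lemma is stated for an arbitrary \emph{filtered} index set $I$, and it is used in that generality in the appendix (the index sets there are neighbourhood bases of zero of general pseudocompact algebras and dg modules, as in Theorem~\ref{thm:pseudocompact-modules}, Lemma~\ref{lemma:pseudocompact-homology} and Proposition~\ref{P:pseudocompact-derived-category}~d)), so retreating to countable $I$ does not dispose of the statement. For uncountable $I$ your transfinite induction does not work: a well-order refining the partial order need not exist (a directed set need not be well-founded), and even with an arbitrary well-order, adjoining one index at a time imposes simultaneous constraints from all previously chosen indices above \emph{and} below the new one, which need not be satisfiable --- this is exactly the mechanism by which an inverse system of non-empty sets with surjective transition maps over an uncountable directed set can have empty limit (Waterhouse's example). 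The lemma is nevertheless true in full generality, but the last step needs a different argument exploiting finite-dimensionality of the fibres: for instance, apply Zorn's lemma to the family of inverse subsystems $(Y_i)$ with each $Y_i$ a non-empty affine subspace of $X_i^{\ast}$ (chains have non-empty intersections by the descending chain condition on affine subspaces of a finite-dimensional space), and show that a minimal such subsystem consists of singletons; alternatively, for the kernels one can invoke the exactness of $\varprojlim$ on filtered systems of finite-dimensional vector spaces via the duality $\varprojlim_i L_i \cong (\colim_i \Hom_k(L_i,k))^{\vee}$ together with the exactness of filtered colimits.
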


Using classical facts on locally finite Grothendieck categories we
deduce the following corollary from the theorem.

\begin{corollary} \label{cor:good-properties-of-pcmA}
\begin{itemize}
\item[a)] The dual of the Krull-Schmidt theorem holds in $\Pcm(A)$ (decompositions into
possibly infinite {\em products} of objects with local endomorphism
rings are unique up to permutation and isomorphism).
\item[b)]  Each object of the category $\Pcm(A)$ admits a projective cover and a
minimal projective resolution.
\item[c)] The category $\Pcm(A)$ is determined, up to equivalence, by its full subcategory
of finite-length objects $\mod A$. More precisely, we have the
equivalence
\[
\Pcm(A) \to \Lex(\mod A, \Mod k) \ko M \mapsto \Hom_A(M,?)|\mod A
\ko
\]
where $\Lex(\mod A, \Mod k)$ is the category of left exact functors
from $\mod A$ to the category $\Mod k$ of vector spaces.
\item[d)] The category $\Pcm(A)$ determines the pseudocompact algebra
$A$ up to isomorphism. Namely, the algebra
$A$ is the endomorphism algebra of any minimal projective generator
of $\Pcm(A)$.
\end{itemize}
\end{corollary}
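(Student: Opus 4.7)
The plan is to derive all four assertions from the structural result that $\Pcm(A)\op$ is a locally finite Grothendieck category (Theorem~\ref{thm:pseudocompact-modules}~b)), by dualizing standard properties of such categories. The key technical tool throughout is the Mittag-Leffler lemma~\ref{lemma:Mittag-Leffler}, which guarantees that any object of $\Pcm(A)$ is the cofiltered limit of its finite-dimensional quotients and that the relevant inverse limits of short exact sequences remain exact.

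For part a), I would invoke the dual of Gabriel's structure theorem for locally finite Grothendieck categories, which asserts that every object decomposes essentially uniquely as a coproduct of indecomposables with local endomorphism rings; dualizing yields the claimed product decompositions in $\Pcm(A)$ with uniqueness up to permutation and isomorphism. For part b), projective covers in $\Pcm(A)$ correspond to injective envelopes in $\Pcm(A)\op$, which exist in any Grothendieck category by a classical theorem of Grothendieck. Concretely, one realizes the projective cover of $M$ as the cofiltered limit of the usual projective covers in $\mod A$ of the finite-dimensional quotients $M/N$, the transition maps being well-defined up to unique isomorphism because projective covers are essential epimorphisms. A minimal projective resolution is then assembled by iterating this construction on successive kernels.

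For part c), the functor $M \mapsto \Hom_A(M,?)|_{\mod A}$ is the dual incarnation of the standard equivalence $\cg \simeq \Lex(\cg_{fl}\op, \Mod k)$ between a locally finite Grothendieck category $\cg$ and the left exact functors on the opposite of its subcategory $\cg_{fl}$ of finite-length objects; applied to $\cg = \Pcm(A)\op$, whose finite-length objects dually identify with $\mod A$, this yields the claim. A quasi-inverse sends $F \in \Lex(\mod A,\Mod k)$ to the cofiltered limit of its finite-length approximations, endowed with its natural pseudocompact topology; the fact that this is inverse to the displayed functor rests on the identity $\Hom_A(M,F)=\colim \Hom_A(M/N,F)$ for $M$ pseudocompact, $F$ of finite length, and $N$ ranging over open submodules of $M$. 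Part d) is then immediate from c): the category $\Pcm(A)$ intrinsically determines $\mod A$ as its subcategory of cocompact objects, and $A$ itself is the topological endomorphism algebra of the regular pseudocompact module $A_A$, which by construction is a minimal projective generator.

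The main obstacle will be the bookkeeping in c) that identifies the topological structure on pseudocompact modules with the pro-categorical structure on $\Lex(\mod A,\Mod k)$. One must verify that continuous $A$-linear maps correspond precisely to natural transformations of left exact functors, which amounts to checking that the fundamental system of open submodules of a pseudocompact module coincides with the cofiltered diagram of finite-length quotients indexing its inverse-limit presentation. This in turn follows from the defining property of the pseudocompact topology together with Lemma~\ref{lemma:Mittag-Leffler}.
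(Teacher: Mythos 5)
Your proposal follows essentially the same route as the paper: both reduce all four assertions to Gabriel's classical theory of locally finite Grothendieck categories via Theorem~\ref{thm:pseudocompact-modules}~b), invoking the Krull--Remak--Schmidt--Azumaya uniqueness theorem for a), existence of injective hulls in Grothendieck categories for b), the description of a locally finite category as $\Lex$ on its category of finite-length objects for c), and the reconstruction of the algebra as the endomorphism ring of a minimal projective generator for d). The only quibble is that in a) you assert \emph{existence} of a decomposition into indecomposables for every object, which is neither true in a general locally finite category nor needed, since the corollary only claims uniqueness of such product decompositions when they exist.
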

\begin{proof} a) Indeed, the Krull-Schmidt theorem holds
in Grothendieck abelian categories by Theorem~1, section~6,
Chapter~I of \cite{Gabriel62}.

b) Indeed,  injective hulls exist in an arbitrary Grothendieck
category by theorem~2, section 6, Chapter~III of \cite{Gabriel62}.

c) This equivalence is a special case of the general description of
the smallest locally finite (or, more generally, locally noetherian)
abelian category containing a given finite length category, \confer
Theorem~1, section~4, Chapter~II of \cite{Gabriel62}.

d) This follows from the description of finite length categories via
pseudocompact algebras in section~4, Chapter~IV of \cite{Gabriel62}.
\end{proof}

\subsection{Pseudocompact modules over Jacobian algebras}
\label{ss:pseudocompact-Jacobian} Let $k$ be a field of
characteristic $0$. Let $Q$ be a non-empty finite quiver and $W$ a
potential in $\hat{kQ}$. The quiver $Q$ may contain loops and
$2$-cycles but we assume that $W$ is reduced, \ie no cycles of
length $\leq 2$ appear in $W$ (and thus no paths of length $\leq 1$
occur in the relations deduced from $W$).  Let $\Gamma$ be the
complete Ginzburg algebra associated with $(Q,W)$ and put
$A=H^0(\Gamma)$.

\begin{lemma} The algebra $A$ is pseudocompact in the sense of
section~\ref{ss:reminder-pseudocompact}, \ie endowed with its
natural topology, it is complete and separated and there is a basis
of neighborhoods of the origin formed by right ideals $I$ such that
$A/I$ is finite-dimensional over $k$.
\end{lemma}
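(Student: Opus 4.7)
The plan is to equip $A = \hat{kQ}/\overline{J}$, where $\overline{J}$ is the closure of the ideal generated by the cyclic derivatives $\partial_a W$, with the quotient topology coming from the $\mathfrak{m}$-adic topology on $\hat{kQ}$, and then verify the three defining properties of a pseudocompact algebra using the Mittag-Leffler lemma~\ref{lemma:Mittag-Leffler}.

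First I would exhibit the candidate basis of neighborhoods of zero: the images $\mathfrak{m}^n A$ of the powers $\mathfrak{m}^n$ of the arrow ideal in $A$. These are two-sided (in particular right) ideals, and since $Q_0$ and $Q_1$ are finite, each quotient $\hat{kQ}/\mathfrak{m}^n$ is finite-dimensional over $k$; therefore $A/\mathfrak{m}^n A$, being a quotient of $\hat{kQ}/\mathfrak{m}^n$, is finite-dimensional as well. This takes care of the existence of a suitable basis of neighborhoods of the origin.

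Next I would verify that $A$ is separated. For each $n$ consider the exact sequence
\[
0 \to (\overline{J}+\mathfrak{m}^n)/\mathfrak{m}^n \to \hat{kQ}/\mathfrak{m}^n \to A/\mathfrak{m}^n A \to 0
\]
of finite-dimensional spaces. Since $\overline{J}$ is by construction a closed subset of $\hat{kQ}$ and the $\mathfrak{m}^n$ form a fundamental system of neighborhoods of $0$ in $\hat{kQ}$, we have $\bigcap_n (\overline{J}+\mathfrak{m}^n) = \overline{J}$, so the kernel of the canonical map $\hat{kQ} \to \varprojlim A/\mathfrak{m}^n A$ is exactly $\overline{J}$; equivalently, $\bigcap_n \mathfrak{m}^n A = 0$ in $A$.

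Finally, for completeness I would apply the Mittag-Leffler lemma~\ref{lemma:Mittag-Leffler} to the inverse system of exact sequences displayed above: all terms are finite-dimensional, so passing to the inverse limit preserves exactness, yielding
\[
0 \to \overline{J} \to \hat{kQ} \to \varprojlim A/\mathfrak{m}^n A \to 0.
\]
Combined with the previous step this identifies $A=\hat{kQ}/\overline{J}$ with the inverse limit $\varprojlim A/\mathfrak{m}^n A$, so $A$ is complete for its quotient topology. There is no real obstacle here; the only subtle point is the use of the fact that $\overline{J}$ is closed (which justifies that the quotient topology is Hausdorff and that the intersection $\bigcap_n (\overline{J}+\mathfrak{m}^n)$ collapses to $\overline{J}$), together with the standard Mittag-Leffler argument for passing completeness through a quotient by a closed ideal.
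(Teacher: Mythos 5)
Your argument is correct, but it follows a different route from the paper's. The paper disposes of this lemma in one line: it notes that the completed Ginzburg dg algebra $\Gamma$ is bilaterally pseudocompact (the $\m^n$ are bilateral dg ideals of finite total codimension) and then invokes the general Lemma~\ref{lemma:pseudocompact-homology}, which says that the homology of any bilaterally pseudocompact dg algebra is again pseudocompact; the proof of that lemma is itself a Mittag--Leffler argument applied to the tower of the $H^*(\Gamma/\m^n)$. You instead bypass the dg algebra entirely and work with the presentation $A=J(Q,W)=\hat{kQ}/\overline{J}$ furnished by the lemma of section~\ref{ss:ginzburgalg}, checking the three axioms by hand from the facts that $\hat{kQ}/\m^n$ is finite-dimensional for a finite quiver and that $\overline{J}$ is closed (so that $\bigcap_n(\overline{J}+\m^n)=\overline{J}$), with Mittag--Leffler supplying completeness of the quotient. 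Your version is more elementary and self-contained; what it does not buy is the extra generality of the paper's lemma (pseudocompactness of the whole graded algebra $H^*(\Gamma)$ and of $H^*$ of pseudocompact dg modules), which is used elsewhere in the appendix. The one point worth making explicit in your write-up is the bridge between the two setups: the ``natural topology'' on $A=H^0(\Gamma)$ in the statement is the one whose basic neighbourhoods of $0$ are the kernels of the maps $H^0(\Gamma)\to H^0(\Gamma/\m^n)$, and this agrees with your $\m$-adic quotient topology on $\hat{kQ}/\overline{J}$ because the degree-zero component of $\m_\Gamma^n$ is exactly the $n$-th power of the arrow ideal of $\hat{kQ}$.
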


\begin{proof} This follows from the fact that $\Gamma$ is
pseudocompact as a dg algebra and
lemma~\ref{lemma:pseudocompact-homology} below.
\end{proof}

We consider the category $\Pcm(A)$ of {\em pseudocompact} modules
over $A$, \ie topological right modules which are complete and
separated and have a basis of neighborhoods of the origin formed by
submodules of finite codimension over $k$, \confer
section~\ref{ss:reminder-pseudocompact}. As recalled there, the
category $\Pcm(A)$ is abelian and its opposite category $\Pcm(A)\op$
is a locally finite Grothendieck category and therefore has all the
good properties enumerated in
Corollary~\ref{cor:good-properties-of-pcmA}. If we combine parts c)
and d) of that Corollary, we obtain the following corollary.

\begin{corollary}  \label{cor:fin-dim-modules-determine-Jacobi-algebra}
The category $\mod A$ of finite-dimensional
$A$-modules determines the algebra $A$ up to isomorphism.
\end{corollary}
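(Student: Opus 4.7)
The plan is to derive this as a quick consequence of Corollary~\ref{cor:good-properties-of-pcmA}, combining its parts (c) and (d). The preceding lemma shows that $A$ is pseudocompact, so both parts apply. The idea is first to reconstruct $\Pcm(A)$ from $\mod A$, and then to reconstruct $A$ itself from $\Pcm(A)$.

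For the first step, I will invoke Corollary~\ref{cor:good-properties-of-pcmA}(c), which asserts the canonical equivalence
\[
\Pcm(A) \iso \Lex(\mod A, \Mod k), \qquad M \mapsto \Hom_A(M,?)|_{\mod A}.
\]
The right-hand side depends only on the abstract abelian category $\mod A$, so any equivalence $\mod A \iso \mod A'$ produces an equivalence $\Pcm(A) \iso \Pcm(A')$ by postcomposition. It is worth noting beforehand that the finite-length objects of $\Pcm(A)$ coincide with $\mod A$: every simple pseudocompact $A$-module is finite-dimensional over $k$ since its annihilator contains an open right ideal of finite codimension, and conversely every finite-dimensional $A$-module is automatically discrete, hence pseudocompact and of finite length.

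For the second step, I will apply Corollary~\ref{cor:good-properties-of-pcmA}(d), which recovers the pseudocompact algebra $A$ as the endomorphism algebra of any minimal projective generator of $\Pcm(A)$; the existence of such a generator is guaranteed by part (b) of the same corollary. Chaining the two steps, any equivalence $\mod A \iso \mod A'$ between categories of finite-dimensional modules over two Jacobian algebras yields an isomorphism of pseudocompact algebras $A \iso A'$. There is essentially no obstacle here: all of the substantive work has already been done in Corollary~\ref{cor:good-properties-of-pcmA} and in the lemma identifying $A$ as pseudocompact, so the argument reduces to a formal two-step chain, and the only point requiring a moment's attention is the identification of $\mod A$ with the finite-length objects of $\Pcm(A)$ noted above.
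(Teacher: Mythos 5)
Your proof is correct and is exactly the paper's argument: the authors obtain the corollary by combining parts c) and d) of Corollary~\ref{cor:good-properties-of-pcmA}, precisely as you do. Your added remark identifying $\mod A$ with the finite-length objects of $\Pcm(A)$ is a reasonable (and implicitly assumed) clarification, not a deviation.
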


This yields a positive answer to question~12.1 of
\cite{DerksenWeymanZelevinsky08},
where it was asked whether the category of finite-dimensional modules
over the Jacobian algebra of a quiver with potential determined
the Jacobian algebra.

\subsection{The pseudocompact derived category of a pseudocompact
algebra} Let $A$ be a pseudocompact $R$-algebra as in
section~\ref{ss:reminder-pseudocompact}. We define the {\em
pseudocompact derived category of $A$} by $
\cd_{pc}(A)=\cd(\Pcm(A)). $ Notice that the opposite category
$\cd_{pc}(A)\op$ is isomorphic to $\cd(\Pcm(A)\op)$ and that
$\Pcm(A)\op$ is a locally finite Grothendieck category. As shown in
\cite{Franke97}, \confer also \cite{KashiwaraSchapira05}, the
category of complexes over a Grothendieck category admits a
structure of Quillen model category whose weak equivalences are the
quasi-isomorphisms and whose cofibrations are the monomorphisms.
This applies in particular to the category $\Pcm(A)\op$. Notice that
the free $A$-module $E=\,_A A$ is an injective cogenerator of this
category and that the algebra $A$ is canonically isomorphic to the
endomorphism algebra of $E$ both in $\Pcm(A)$ and in $\cd_{pc}(A)$.

\begin{lemma} If a locally finite category $\ca$ is of finite homological
dimension, then the derived category $\cd\ca$ is compactly generated
by the simple objects of $\ca$. In particular, if the category
$\ca=\Pcm(A)$ is of finite homological dimension, then the
opposite $\cd_{pc}(A)\op$ of the
pseudocompact derived category  is compactly generated
by the simple $A$-modules.
\end{lemma}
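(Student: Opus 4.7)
The plan is to verify the two conditions for compact generation separately: (i) every simple $S\in\ca$ is compact in $\cd\ca$, and (ii) the simples jointly detect non-vanishing, so that if $M\in\cd\ca$ satisfies $\Hom_{\cd\ca}(S,\Sigma^nM)=0$ for all simple $S\in\ca$ and all integers $n$, then $M\simeq 0$. The principal tool throughout will be the canonical $t$-structure on $\cd\ca$ with heart $\ca$, its truncation functors $\tau_{<n}$, $\tau_{\geq n}$, its cohomology functors $H^n$, and the usual $t$-structure vanishing $\Hom_{\cd\ca}(X,Y)=0$ whenever $X\in\cd_{\geq a}$ and $Y\in\cd_{\leq b}$ with $a>b$.

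For (ii), which does not actually require finite homological dimension, I would fix a non-zero $M$, choose $i$ with $H^iM\neq 0$, and use local finiteness to pick a simple subobject $S\hookrightarrow H^iM$. The $t$-structure triangle
\[
\Sigma^{-i}H^iM \to \tau_{\geq i}M \to \tau_{\geq i+1}M \to \Sigma^{1-i}H^iM
\]
provides a canonical morphism $\Sigma^{-i}H^iM\to\tau_{\geq i}M$ whose effect on $H^i$ is the identity; composing with $\Sigma^{-i}S\hookrightarrow\Sigma^{-i}H^iM$ gives a morphism $\Sigma^{-i}S\to\tau_{\geq i}M$ whose $H^i$ is the given non-zero inclusion, hence which is itself non-zero. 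Applying $\Hom_{\cd\ca}(\Sigma^{-i}S,-)$ to the truncation triangle $\tau_{<i}M\to M\to\tau_{\geq i}M\to\Sigma\tau_{<i}M$ and using that $\Sigma\tau_{<i}M\in\cd_{\leq i-2}$ while $\Sigma^{-i}S\in\cd_{\geq i}$, the $t$-structure vanishing kills $\Hom_{\cd\ca}(\Sigma^{-i}S,\Sigma\tau_{<i}M)$, so the restriction $\Hom_{\cd\ca}(\Sigma^{-i}S,M)\to\Hom_{\cd\ca}(\Sigma^{-i}S,\tau_{\geq i}M)$ is surjective. Any lift of the above non-zero morphism yields a non-zero element of $\Hom_{\cd\ca}(\Sigma^{-i}S,M)=\Hom_{\cd\ca}(S,\Sigma^iM)$, contradicting the hypothesis on $M$.

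For (i), the plan is to run the hyper-Ext spectral sequence
\[
E_2^{p,q}=\Ext^p_\ca(S,H^qM)\Rightarrow \Hom_{\cd\ca}(S,\Sigma^{p+q}M).
\]
Finite homological dimension, say bounded by $d$, forces $E_2^{p,q}=0$ for $p>d$, so the spectral sequence has at most $d+1$ non-zero columns and converges strongly to a finite filtration on its abutment. Local finiteness of $\ca$ implies local noetherianity (finite-length objects are noetherian), whence coproducts of injectives remain injective and $\Hom_\ca(S,-)$---and therefore every derived functor $\Ext^p_\ca(S,-)$---commutes with arbitrary coproducts. Cohomology in $\cd\ca$ also commutes with coproducts, since coproducts in $\cd\ca$ are represented at the chain level. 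Hence each $E_2^{p,q}$ commutes with coproducts in $M$; by naturality of the differentials the same holds for the finitely many subsequent pages and for $E_\infty$. Coproducts being exact in a Grothendieck category, the finite filtration on $\Hom_{\cd\ca}(S,\Sigma^n-)$ also commutes with coproducts, so $\Hom_{\cd\ca}(S,-)$ does, proving compactness of $S$.

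The main technical hurdle is the combination of strong convergence of the spectral sequence with the preservation of its filtration under coproducts; both rest essentially on having only finitely many non-zero columns, which is exactly what finite homological dimension supplies. The ``in particular'' statement then follows by applying the general result to the locally finite category $\Pcm(A)\op$ (which inherits finite homological dimension from $\Pcm(A)$): one has $\cd(\Pcm(A)\op)=\cd_{pc}(A)\op$, and the simples of $\Pcm(A)\op$ coincide with those of $\Pcm(A)$, namely the simple $A$-modules.
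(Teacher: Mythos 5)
Your detection argument --- that an object $M$ of $\cd\ca$ with $\Hom_{\cd\ca}(S,\Sigma^nM)=0$ for all simples $S$ and all $n$ must vanish --- is correct, and it is genuinely different from the paper's: there, one notes that the simples generate all finite-length objects, that these generate $\ca$, and that a complex of injectives right orthogonal to them in the homotopy category is acyclic. Your $t$-structure argument via a simple subobject of a non-vanishing $H^iM$ is a perfectly good substitute and, as you observe, uses no hypothesis on the homological dimension.

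The compactness half is where there is a real gap. The hyper-Ext spectral sequence $\Ext^p_\ca(S,H^qM)\Rightarrow\Hom_{\cd\ca}(S,\Sigma^{p+q}M)$ arises from the filtration of $M$ by its truncations, and for $M$ unbounded in \emph{both} cohomological directions its conditional convergence to the stated abutment is not automatic: it amounts to the claim that $\Hom_{\cd\ca}(\Sigma^{-n}S,?)$ only sees the finite window $\tau_{[n-d,n]}M$, equivalently that $\Hom_{\cd\ca}(\Sigma^{-n}S,X)=0$ for every $X$ in $\cd_{\leq n-d-1}$ that is unbounded below. Having only finitely many nonzero columns yields \emph{strong} convergence only once \emph{conditional} convergence is in place, so the phrase ``which is exactly what finite homological dimension supplies'' is doing unacknowledged work: the unbounded-below direction is precisely the delicate one. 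The missing vanishing is true, but establishing it requires essentially the same input as the paper's proof, namely that under finite homological dimension every complex with injective components is fibrant (K-injective) for the injective model structure, after which a cosyzygy argument using $\Ext^{>d}_\ca(S,?)=0$ gives the vanishing. Once you have the reduction to a finite window, your coproduct bookkeeping (via $\Ext^p(S,?)$ and $H^q$ commuting with coproducts) does go through --- though at that point a finite d\'evissage replaces the spectral sequence entirely. The paper's route avoids all of this: it works directly in the homotopy category of complexes of injectives, where a morphism from the noetherian object $S$ into a coproduct of such complexes (still fibrant, since coproducts of injectives are injective in a locally noetherian category) factors through a finite subcoproduct, giving compactness in one stroke. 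I would either supply the K-injectivity statement explicitly or switch to that more economical argument.
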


\begin{proof} If $\ca$ is of finite homological dimension, then a complex
over $\ca$ is fibrant-cofibrant for the above model structure iff it
has injective components. Moreover, since $\ca$ is in particular
locally noetherian, arbitrary coproducts of injective objects are
injective. Thus, to check that a simple object $S$ is compact, it
suffices to check that any morphism from $S$ to a coproduct of a
family of complexes with injective components factors through a
finite sub-coproduct. This is clear. Let $\ct$ be the closure in
$\cd\ca$ of the simple objects under suspensions, desuspensions,
extensions and arbitrary coproducts. Since $\ct$ is a compactly
generated subcategory of $\cd\ca$, to conclude that $\ct$ coincides
with $\cd\ca$, it suffices to check that the right orthogonal of
$\ct$ in $\cd\ca$ vanishes. Clearly $\ct$ contains all objects of
finite length. These form a generating family for $\ca$. Now it is
easy to check that a complex $I$ of injectives is right orthogonal
to $\ct$ in the category of complexes modulo homotopy iff $I$ is
acyclic. Thus the right orthogonal subcategory of $\ct$ in $\cd\ca$
vanishes and $\ct$ equals $\cd\ca$.
\end{proof}

\subsection{Extension to the dg setting}
\label{ss:extension-to-dg-setting} Let $k$ and $R$ be as in
section~\ref{ss:reminder-pseudocompact}. Let $A$ be a differential
algebra in the category of graded $R$-bimodules. We assume that $A$
is {\em bilaterally pseudocompact}, \ie it is endowed with a
complete separated topology admitting a basis of neighborhoods of
$0$ formed by bilateral differential graded ideals $I$ of finite
total codimension in $A$.

For example, if $Q$ is a finite graded quiver, we can take $R$ to be
the product of copies of $k$ indexed by the vertices of $Q$ and $A$
to be the completed path algebra, \ie for each integer $n$, the
component $A^n$ is the product of the spaces $kc$, where $c$ ranges
over the paths in $Q$ of total degree $n$. We endow $A$ with a
continuous differential sending each arrow to a possibly infinite
linear combination of paths of length $\geq 2$. For each $n$, we
define $I_n$ to be the ideal generated by the paths of lengh $\geq
n$ and we define the topology on $A$ to have the $I_n$ as a basis of
neighborhoods of $0$. Then $A$ is bilaterally pseudocompact.

A right dg $A$-module is {\em pseudocompact} if it is endowed with a
topology for which it is complete and separated (in the category of
graded $A$-modules) and which admits a basis of neighborhoods of $0$
formed by dg submodules of finite total codimension. Clearly $A$ is
a pseudocompact dg module over itself.

\begin{lemma} \label{lemma:pseudocompact-homology}
\begin{itemize}
\item[a)]
The homology $H^*(A)$ is a bilaterally pseudocompact graded
$R$-algebra. In particular, $H^0(A)$ is a pseudocompact $R$-algebra.
\item[b)]
For each pseudocompact dg module $M$, the homology $H^*(M)$ is a
pseudocompact graded module over $H^*(A)$.
\end{itemize}
\end{lemma}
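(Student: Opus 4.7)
The plan is to present $A$ as a filtered inverse limit of its finite-dimensional dg quotients and to propagate this description to $H^*(A)$ by means of the Mittag--Leffler lemma~\ref{lemma:Mittag-Leffler}. By assumption, $A = \varprojlim_I A/I$ as topological graded $R$-bimodules, where $I$ ranges over a filtered inverse system of bilateral dg ideals of finite total codimension. For each such $I$, the dg algebra $A/I$ is finite-dimensional, so its cocycles $Z^n(A/I)$, coboundaries $B^n(A/I)$ and cohomology $H^n(A/I)$ are all finite-dimensional in each degree.

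First I would observe that $Z^n(A) = \varprojlim_I Z^n(A/I)$, which follows directly from $A^n = \varprojlim_I A^n/I^n$ together with the continuity of the differential. Next, I would apply Lemma~\ref{lemma:Mittag-Leffler} to the inverse system of short exact sequences
\[
0 \to Z^n(A/I) \to A^n/I^n \stackrel{d}{\to} B^{n+1}(A/I) \to 0,
\]
all of whose terms are finite-dimensional. Since $\varprojlim_I Z^n(A/I) = Z^n(A)$ and $\varprojlim_I A^n/I^n = A^n$, the inverse limit sequence identifies $B^{n+1}(A)$, the image of $d\colon A^n \to A^{n+1}$, with $\varprojlim_I B^{n+1}(A/I)$; in particular $B^n(A)$ is closed in $A^n$. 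A second application of Lemma~\ref{lemma:Mittag-Leffler}, this time to
\[
0 \to B^n(A/I) \to Z^n(A/I) \to H^n(A/I) \to 0,
\]
yields the fundamental identity
\[
H^n(A) = \varprojlim_I H^n(A/I).
\]
This realizes $H^*(A)$ as the filtered inverse limit of the finite-dimensional graded $R$-algebras $H^*(A/I)$, and the pseudocompact structure follows at once: the kernels of the projections $H^*(A) \to H^*(A/I)$ are bilateral graded ideals of finite total codimension, and they form a basis of open neighborhoods of zero for a complete separated linear topology. The degree-zero part gives the last statement of a).

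For b), the plan is identical with $M$ in place of $A$: write $M = \varprojlim_N M/N$ over the filtered system of dg submodules $N$ of finite total codimension. A closed subspace of finite codimension in a pseudocompact vector space is automatically open (its quotient is a finite-dimensional Hausdorff topological $k$-vector space, hence discrete), so each $M/N$ is in fact a finite-dimensional dg module over the finite-dimensional dg algebra $A/I_N$ for a suitable open bilateral dg ideal $I_N \subset A$. Running the two Mittag--Leffler applications verbatim, one obtains $H^n(M) = \varprojlim_N H^n(M/N)$, and the induced linear topology turns $H^*(M)$ into a pseudocompact graded module over $H^*(A)$.

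The one subtle step is the identification $B^{n+1}(A) = \varprojlim_I B^{n+1}(A/I)$: without Mittag--Leffler one only obtains the inclusion $B^{n+1}(A) \subseteq \varprojlim_I B^{n+1}(A/I)$, and a coherent system of preimages of an element on the right need not assemble into a single coboundary in $A$. Everything else is a routine manipulation of inverse limits of finite-dimensional exact sequences, and the finite-dimensionality that powers Mittag--Leffler is precisely what the bilateral pseudocompactness hypothesis supplies.
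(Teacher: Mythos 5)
Your proof is correct and takes essentially the same route as the paper's: both identify $H^*(A)$ with the inverse limit of the $H^*(A/I)$ via the Mittag--Leffler lemma and then take the kernels of the maps $H^*(A)\to H^*(A/I)$ (respectively $H^*(M)\to H^*(M/M')$) as the basis of neighborhoods of zero. You merely spell out the two applications of Lemma~\ref{lemma:Mittag-Leffler} --- first to the cocycle/coboundary sequences, then to the coboundary/cohomology sequences --- which the paper compresses into a single sentence.
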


\begin{proof} a) By the Mittag-Leffler lemma~\ref{lemma:Mittag-Leffler},
the homology $H^*(A)$ identifies with the limit of the $H^*(A/I)$, where
$I$ runs through a basis of neighbourhoods of zero formed by bilateral
dg ideals of finite codimension. This inverse limit is also
the inverse limit of the images of the maps $H^*(A) \to H^*(A/I)$.
Now clearly, the kernels of these maps are of finite codimension
and form a basis of neighborhoods of zero for a complete separated
topology on $H^*(A)$ (in the category of graded $R$-modules).
b) In analogy with a), the homology $H^*(M)$ is endowed with
the topology where a basis of neighbourhoods of zero is formed
by the kernels of the maps $H^*(M)\to H^*(M/M')$, where $M'$ runs
through a basis of neighbourhoods of zero formed by dg submodules of
finite codimension.
\end{proof}

A morphism $L \to M$ between pseudocompact dg modules is a {\em
quasi-isomorphism} if it induces an isomorphism $H^*(L) \to H^*(M)$.
Let $\cc_{pc}(A)$ be the category of pseudocompact dg right
$A$-modules. It becomes a dg category in the natural way. Let
$\ch_{pc}(A)$ be the associated zeroth homology category, \ie the
category up to homotopy of pseudocompact dg $A$-modules. The {\em
strictly perfect derived category of $A$} is the thick subcategory
of $\ch_{pc}(A)$ generated by the $A$-module $A$. A pseudocompact dg
$A$-module is {\em strictly perfect} if it lies in this subcategory.

The opposite algebra $A\op$ is still a bilaterally pseudocompact dg
algebra. If $A$ and $A'$ are two bilaterally pseudocompact dg
algebras, then so is their completed tensor product $A \hat{\ten}_k
A'$. The dg algebra $A$ is a pseudocompact dg module over the
enveloping algebra $A\op\hat{\ten} A$ (this is the reason why we use
bilaterally pseudocompact algebras).

The bilaterally pseudocompact dg algebra $A$ is {\em topologically homologically
smooth} if the module $A$ considered as a pseudocompact dg module
over $A\op\hat{\ten}_k A$ is quasi-isomorphic to a strictly perfect
dg module.

\begin{lemma}[\cite{Keller08a}] If $A$ is the completed path algebra of a finite
graded quiver endowed with a continuous differential sending each
arrow to a possibly infinite linear combination of paths of length
$\geq 2$, then $A$ is topologically homologically smooth.
\end{lemma}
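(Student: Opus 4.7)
The plan is to exhibit an explicit length-two resolution of $A$ in the category of pseudocompact dg $A$-bimodules and to show that both of its terms lie in the thick subcategory of $\ch_{pc}(A\op\hat{\ten}_k A)$ generated by $A\op\hat{\ten}_k A$. Set $V=kQ_1$, viewed as a finite-dimensional graded $R$-bimodule, and let $\mathfrak{m}\subset A$ denote the closed ideal generated by the arrows. I will build the pseudocompact dg bimodule
\[
P=\bigl(A\hat{\ten}_R V\hat{\ten}_R A\xrightarrow{\ \delta\ }A\hat{\ten}_R A\bigr),
\]
where the second term sits in homological degree $0$, the first in homological degree $1$, and $\delta(a\otimes v\otimes b)=av\otimes b-a\otimes vb$.

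The key construction is to realize $A\hat{\ten}_R V\hat{\ten}_R A$ as the pseudocompact dg bimodule of noncommutative K\"ahler differentials $\Omega^1_A:=\ker(\mu\colon A\hat{\ten}_R A\to A)$. The map
\[
A\hat{\ten}_R V\hat{\ten}_R A\longrightarrow \Omega^1_A,\qquad a\otimes v\otimes b\mapsto a(1\otimes v-v\otimes 1)b,
\]
is an isomorphism of pseudocompact graded $A$-bimodules, as one verifies by the universal property of the completed tensor algebra $A=\hat{T}_R V$: continuous derivations from $A$ to a pseudocompact bimodule $M$ correspond bijectively to continuous $R$-bimodule morphisms $V\to M$, and compatibility with completions is handled by Lemma~\ref{lemma:Mittag-Leffler}. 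Transporting the dg structure from $\Omega^1_A\subset A\hat{\ten}_R A$ along this isomorphism equips $A\hat{\ten}_R V\hat{\ten}_R A$ with a differential that is \emph{not} merely $d_A$ applied factorwise, because $d_A(v)\in\mathfrak{m}^2$ forces a corrective term. The short sequence
\[
0\longrightarrow \Omega^1_A\longrightarrow A\hat{\ten}_R A\xrightarrow{\ \mu\ } A\longrightarrow 0
\]
is exact in each graded degree, so $A$ is quasi-isomorphic to the mapping cone of $\delta$.

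Next I check that both terms of $P$ lie in the thick subcategory generated by $A\op\hat{\ten}_k A$. The bimodule $A\hat{\ten}_R A$ splits as $\bigoplus_{i\in Q_0}Ae_i\hat{\ten}_k e_iA$, which realizes it as a direct summand of $A\op\hat{\ten}_k A$ cut out by the idempotent $\sum_i e_i\otimes e_i$. Similarly $A\hat{\ten}_R V\hat{\ten}_R A$ splits as the \emph{finite} direct sum $\bigoplus_{\alpha\in Q_1}(Ae_{t(\alpha)}\hat{\ten}_k e_{s(\alpha)}A)[|\alpha|]$, each summand being a shift of a direct summand of $A\op\hat{\ten}_k A$; here finiteness is crucial and follows from $Q_1$ being finite. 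Both terms are therefore strictly perfect, and since strict perfectness is closed under shifts and cones in $\ch_{pc}(A\op\hat{\ten}_k A)$, the mapping cone of $\delta$ is strictly perfect. This cone being quasi-isomorphic to $A$ proves the claim.

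The main obstacle is the controlled transport of the differential along the isomorphism $A\hat{\ten}_R V\hat{\ten}_R A\cong\Omega^1_A$. Everything reduces in the end to the classical universal property of the (completed) tensor algebra, but in the dg setting the fact that $d_A$ does not stabilize $V$ means the transported differential picks up a non-trivial term encoding $d_A|_V\colon V\to\mathfrak{m}^2$ via the universal derivation $D\colon A\to\Omega^1_A$, expanded through the Leibniz rule. The hypothesis that $d_A$ sends each arrow to a combination of paths of length $\geq 2$ is precisely what guarantees that this correction term respects the $\mathfrak{m}$-adic filtration; one may then reduce exactness of $0\to\Omega^1_A\to A\hat{\ten}_R A\to A\to 0$ to the classical exactness of the Koszul bimodule resolution of the pure tensor algebra $T_R V$ by passing to the associated graded, where the differential vanishes, and invoking Lemma~\ref{lemma:Mittag-Leffler} to lift back to the completion.
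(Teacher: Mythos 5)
The paper itself does not prove this lemma: it is quoted from \cite{Keller08a}, so your proposal can only be measured against the argument of that reference, which it essentially reproduces. The overall architecture is the right one: identify $A\hat{\ten}_R V\hat{\ten}_R A$ with $\Omega^1_A=\ker(\mu)$, obtain the two-term bimodule resolution $0\to A\hat{\ten}_R V\hat{\ten}_R A\to A\hat{\ten}_R A\to A\to 0$, and check that both terms are strictly perfect over $A\op\hat{\ten}_k A$; your handling of the universal property of the completed tensor algebra and of the passage to the completion via Lemma~\ref{lemma:Mittag-Leffler} is sound.

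There is, however, a genuine gap at the last step, and it is one you set yourself up for. You correctly insist that the differential transported to $N=A\hat{\ten}_R V\hat{\ten}_R A$ is \emph{not} the factorwise one: it is $d_0+\theta$, where $\theta$ sends $1\ten\alpha\ten 1$ to the sum of the terms $p\ten\beta\ten q$ over all decompositions of the paths occurring in $d_A(\alpha)$, and hence takes values in $\m N+N\m$. Two paragraphs later you nevertheless assert that $N$ \emph{splits} as the finite direct sum of the $(Ae_{t(\alpha)}\hat{\ten}_k e_{s(\alpha)}A)[|\alpha|]$ and conclude that it is strictly perfect. That splitting holds only for the underlying graded bimodule; it is not a decomposition of dg bimodules, precisely because $\theta$ mixes the summands (it can even map a summand into itself times $\m$). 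So "therefore strictly perfect" does not follow from what you wrote: a twisted object of this kind is not visibly a finite iterated cone of shifts of summands of $A\op\hat{\ten}_k A$, and in the pseudocompact setting you cannot appeal to an infinite ($\m$-adic) filtration, since the thick subcategory generated by $A\op\hat{\ten}_k A$ is only closed under finitely many cones. To close the gap you need a \emph{finite} filtration of $N$ by dg sub-bimodules with untwisted subquotients. One way: a component of $\theta$ from the $\alpha$-summand to the $\beta$-summand is multiplication by $p\ten q$ with $|p|+|q|=|\alpha|+1-|\beta|$; if all arrows (hence all of $A$) sit in non-positive cohomological degrees, this forces $|\beta|\geq|\alpha|+1$, so $\theta$ is strictly triangular for the ordering of arrows by degree, and $F_m=\bigoplus_{|\alpha|\geq m}N_\alpha$ is a finite filtration by dg sub-bimodules whose subquotients are honest finite direct sums of shifted summands of $A\op\hat{\ten}_k A$. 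This covers the Ginzburg algebras of the paper (arrows in degrees $0,-1,-2$); for a completely general graded quiver some such supplementary argument is in any case unavoidable, and your write-up does not supply it.
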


We define the {\em pseudocompact derived category} $\cd_{pc}(A)$ to
be the localization of $\ch_{pc}(A)$ at the class of
quasi-isomorphisms. As we will see below, this definition does yield
a reasonable category if $A$ is topologically homologically smooth
and concentrated in non positive degrees. For arbitrary
pseudocompact dg algebras, the category $\cd_{pc}(A)$ should be
defined with more care, \confer~\cite{Positselski09}. We define the
{\em perfect derived category $\per(A)$} to be the thick subcategory
of $\cd_{pc}(A)$ generated by the free $A$-module of rank 1. We
define the {\em finite-dimensional derived category $\cd_{fd}(A)$}
to be the full subcategory whose objects are the pseudocompact dg
modules $M$ such that $(\cd_{pc}(A))(P,M)$ is finite-dimensional for
each perfect $P$.

\begin{proposition}\label{P:pseudocompact-derived-category} Assume
that $A$ is topologically homologically smooth and that $H^p(A)$
vanishes for all $p>0$.
\begin{itemize}

\item[a)] The canonical functor $\ch_{pc}(A)\to \cd_{pc}(A)$ has a
left adjoint $M \mapsto\bp M$.

\item[b)] The triangulated category $\cd_{fd}(A)$ is generated
by the dg modules of finite dimension concentrated in degree~$0$.

\item[c)] The full subcategory $\cd_{fd}(A)$ of $\cd_{pc}(A)$
is contained in the perfect derived category $\per(A)$.
In the setup of section~\ref{ss:finite-dimensional-dg-modules}, it
coincides with the subcategory of the same name defined there.

\item[d)] The opposite category $\cd_{pc}(A)\op$ is compactly
generated by $\cd_{fd}(A)$.

\item[e)] Let $A \to A'$ be a quasi-isomorphism of bilaterally pseudocompact,
topologically homologically smooth dg algebras whose homology is
concentrated in non positive degrees. Then the restriction functor
$\cd_{pc}(A') \to \cd_{pc}(A)$ is an equivalence. In particular, if
the homology of $A$ is concentrated in degree~$0$, there is an
equivalence $\cd_{pc}(A) \to \cd_{pc}(H^0 A)$. Moreover, in this
casem $\cd_{pc}(H^0 A)$ is equivalent to the derived category of the
abelian category $\Pcm(H^0 A)$.

\item[f)] Assume that $A$ is a complete path algebra as
in section~\ref{ss:finite-dimensional-dg-modules}. There is an
equivalence between $\cd_{pc}(A)\op$ and the localizing subcategory
$\cd_0(A)$ of the ordinary derived category $\cd(A)$ generated by
the finite-dimensional dg $A$-modules.

\end{itemize}
\end{proposition}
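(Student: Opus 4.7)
My overall strategy would be to transport the classical derived-categorical machinery to the pseudocompact setting, using Theorem~\ref{thm:pseudocompact-modules} to realize $\cd_{pc}(A)\op$ as the derived category of a locally finite Grothendieck category, and to exploit topological smoothness just as ordinary smoothness is used in the non-topological theory.

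For (a), I would construct cofibrant replacements $\bp M \to M$ by the standard transfinite procedure: build $\bp M$ as an increasing union of pseudocompact dg submodules $F_n$, with each subquotient $F_n/F_{n-1}$ a (pseudocompact) summand of a product of shifted copies of $A$, mapping successively onto a minimal graded cover of $H^*(M)$. Dually, this amounts to K-injective resolutions in the Grothendieck category $\Pcm(A)\op$, whose existence is provided by the Franke/Kashiwara--Schapira model structure. Once such a $\bp M$ exists, the adjunction identity
\[
\Hom_{\ch_{pc}(A)}(\bp M,N) \iso \Hom_{\cd_{pc}(A)}(M,N)
\]
is formal.

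For (b), I would replace $A$ by the quasi-isomorphic subalgebra $\tau_{\leq 0}A$ so that the truncation sequence $\tau_{\leq 0}M \to M \to \tau_{>0}M$ becomes a triangle in $\cd_{pc}(A)$. For $M$ in $\cd_{fd}(A)$ the total homology is finite-dimensional, so induction on cohomological amplitude and then on the length of $H^0$ reduces to modules that are finite-dimensional and concentrated in degree~$0$, which are finite iterated extensions of simples. For (c), topological smoothness provides a strictly perfect resolution $\bp A$ of $A$ over $A\op\hat{\ten} A$; running the construction of section~\ref{ss:cofibrant-resolutions} inside $\Pcm(A)$ then produces an explicit perfect resolution of each simple module $S_i$, so by (b) every object of $\cd_{fd}(A)$ lies in $\per(A)$. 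The identification with the subcategory of section~\ref{ss:finite-dimensional-dg-modules} reduces to a Hom-space comparison along the canonical functor, parallel to the proof of Theorem~\ref{thm:findim-compact}~a).

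Part (d) would then follow: simple modules are noetherian in $\Pcm(A)\op$ and have finite projective dimension by topological smoothness, hence are compact in $\cd_{pc}(A)\op$; together with (b) they give a compact generating family. Part (e) is formal: restriction along a quasi-isomorphism $A \to A'$ preserves quasi-isomorphisms and admits a quasi-inverse given by derived completed extension of scalars; the special case $A = H^0 A$ then identifies $\cd_{pc}(A)$ with the derived category of the abelian category $\Pcm(H^0 A)$. For (f), I would use continuous $k$-linear duality, which restricts to an equivalence between the finite-dimensional subcategories of $\Pcm(A)$ and of the category of modules considered in section~\ref{ss:finite-dimensional-dg-modules}; extending this duality along coproducts on one side and products on the other yields the equivalence $\cd_{pc}(A)\op \simeq \cd_0(A)$, using that both sides are compactly generated by the equivalent subcategories of finite-dimensional objects. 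The main obstacle will be (c): although the resolution construction of section~\ref{ss:cofibrant-resolutions} makes formal sense in the pseudocompact setting, verifying that it is \emph{perfect} requires checking that topological smoothness forces it to be bounded in length, which is where that hypothesis genuinely enters. A secondary subtlety in (d) is that compactness in $\cd_{pc}(A)\op$ means commuting with products in $\cd_{pc}(A)$, which must be computed as inverse limits in the pseudocompact category; one should invoke the Mittag--Leffler argument of Lemma~\ref{lemma:Mittag-Leffler} to make this work.
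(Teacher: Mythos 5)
The main gap is in part a). You propose to build $\bp M$ by the ``standard transfinite procedure'' as an increasing union of submodules with subquotients summands of products of shifted copies of $A$, or dually to invoke the Franke/Kashiwara--Schapira model structure on complexes over the Grothendieck category $\Pcm(A)\op$. Neither route is available here. The category $\cc_{pc}(A)$ of pseudocompact dg modules over a pseudocompact \emph{dg} algebra is not the category of complexes over an abelian category, so the injective model structure on $\cd(\Pcm(A)\op)$ (which the paper uses only for ordinary, non-dg pseudocompact algebras) does not apply; and exhaustive increasing filtrations are exactly the colimit constructions that $\Pcm(A)$ does not support, since it is the \emph{opposite} of a Grothendieck category and hence has exact filtered limits rather than colimits. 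This is precisely the place where topological homological smoothness is essential, and your proposal does not use it there: the paper's proof chooses a quasi-isomorphism $P \to A$ of $A\op\hat{\ten}_k A$-modules with $P$ strictly perfect, observes that it is a homotopy equivalence of pseudocompact one-sided modules (its cone is acyclic and lies in the thick subcategory generated by the $A\hat{\ten}_k V$ over the separable base $R$), and takes $\bp M = M\hat{\ten}_A P$; the adjunction then reduces to the case $P = A\hat{\ten}_k A$, where $\Hom_{\ch_{pc}(A)}(M\hat{\ten}_k A, ?) = \Hom_{\ch_{pc}(R)}(M,?)$ inverts quasi-isomorphisms because $R$ is semisimple. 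Without such a $P$ the localization $\cd_{pc}(A)$ need not even be well behaved, as the paper warns with its reference to Positselski.

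This single bimodule $P$ also carries the rest of the argument, and its absence propagates into your later parts. In c), resolving the simples by the construction of section~\ref{ss:cofibrant-resolutions} only makes sense when $A$ is a completed path algebra, whereas the statement concerns general topologically smooth $A$; the paper instead notes that for $M$ of finite total dimension $M\hat{\ten}_A P$ is automatically strictly perfect (it is built from finitely many shifted copies of $M\hat{\ten}_k A$), so no boundedness of an ad hoc resolution needs to be checked. In d), co-compactness of a finite-dimensional $M$ does not follow from ``noetherian plus finite projective dimension'' (notions that do not transfer to the dg setting); the paper proves it by replacing $P$ with $P/PI$ for a cofinite ideal $I$ annihilating $M$ and using that $P/PI$ is perfect as a left module, so that $\hat{\ten}_A(P/PI)$ commutes with products, and it establishes generation by showing that $H^*(M)$ is the inverse limit of the $H^*(M/M')$. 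Your parts b) and e) are essentially the paper's arguments in outline. In f) your duality idea is the right one, but duality does not directly define a functor on $\cd_0(A)$; the paper inserts the intermediate category $\ch_0(A)/\ac_0(A)$ of modules that are unions of their finite-dimensional submodules so that both comparison functors exist before being checked on compact generators.
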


\begin{proof}
  a) Let $\phi: P \to A$ be a quasi-isomorphism of $A\op\hat{\ten}
  A$-modules where $P$ is strictly perfect. Then the underlying
  morphism of pseudocompact left $A$-modules is an homotopy
  equivalence. Indeed, its cone is contractible since it is acyclic
  and lies in the thick subcategory of $\ch_{pc}(A\op)$ generated by
  the $A\hat{\ten}_k V$, where $V$ is a pseudocompact dg $k$-module.
  Thus, for each pseudocompact dg $A$-module $M$, the induced
  morphism
\[
M\hat{\ten}_A P \to M\hat{\ten}_A A = M
\]
is a quasi-isomorphism. We claim that the canonical map
\[
\Hom_{\ch_{pc}(A)}(M\hat{\ten}_A P, ? ) \to
\Hom_{\cd_{pc}(A)}(M\hat{\ten}_A P, ? )
\]
is bijective (which shows that $M \mapsto M\hat{\ten}_A P$ is left
adjoint to the canonical functor). Indeed, since $P$ is strictly
perfect, it suffices to show that if we replace $P$ by
$A\hat{\ten}_k A$, then the corresponding map is bijective. But we
have
\[
M \hat{\ten}_A (A \hat{\ten}_k A) = M \hat{\ten}_k A
\]
and the functor
\[
\Hom_{\ch_{pc}(A)}(M\hat{\ten}_k A, ?) = \Hom_{\ch_{pc}(R)}(M, ?)
\]
makes quasi-isomorphisms invertible, which implies the claim.

b) After replacing $A$ with $\tau_{\leq 0} A$,
\confer~section~\ref{ss:t-structure} below, we may and will assume
that the component $A^p$ vanishes for all $p>0$. Then, for each dg
module $M$, in the exact sequence of complexes
\[
0 \to \tau_{< 0} M \to M \to \tau_{\geq 0} M \to 0 \ko
\]
the complex $\tau_{< 0} M$ is a dg $A$-submodule of $M$. Thus, the
sequence is a sequence of dg $A$-modules. It is clear that the
truncation functors $\tau_{<0}$ and $\tau_{\geq 0}$ preserve
quasi-isomorphisms and easy to see that they define  $t$-structures
on $\cd (A)$, $\cd_{pc}(A)$ and $\cd_{fd}(A)$,
\confer~section~\ref{ss:t-structure} below. Clearly, the
$t$-structure obtained on $\cd_{fd}(A)$ is non degenerate and each
object $M$ of its heart is quasi-isomorphic to the dg module
$H^0(M)$, which is concentrated in degree~$0$ and
finite-dimensional. This clearly implies the claim.

c) Let $M$ be in $\cd_{fd}(A)$. By b), we may assume that $M$ is of
finite total dimension. Let $P$ be as in the proof of a). Then
$M\hat{\ten}_A P$ is perfect and quasi-isomorphic to $M$.
The second claim follows because the two categories identify
with the same full subcategory of $\per(A)$.

d) We have to check that $\cd_{pc}(A)$ has arbitrary products.
Indeed, they are given by products of pseudocompact dg modules,
which in turn are given by the products of the underlying dg
modules. We have to check that $k$-finite-dimensional pseudocompact
dg modules are co-compact in $\cd_{pc}(A)$. For this, we first
observe that they are  co-compact in $\ch_{pc}(A)$. Now let $P$ be
as in the proof of a) and let $X_i$, $i\in I$, be a family of
objects of $\cd_{pc}(A)$. We consider the space of morphisms
\[
(\ch A)((\prod_{i} X_i)\hat{\ten}_A P, M).
\]
Since the module $M$ is finite-dimensional, it is annihilated by
some bilateral dg ideal $I$ of finite codimension in $A$. Thus
the above space of morphisms is isomorphic to
\[
(\ch A)((\prod_{i} X_i)\hat{\ten}_A (P/PI), M)
\]
Now the bimodule $P/PI$ is perfect as a left dg $A$-module
(since $A\op\hat{\ten}_A (A/I) = A\op\ten_A (A/I)$ is left perfect).
Thus, we have the isomorphisms
\[
(\prod_{i} X_i)\hat{\ten}_A (P/PI)= (\prod_{i} X_i)\ten_A (P/PI)
= \prod_{i} (X_i\ten_A (P/PI)).
\]
Since $M$ is co-compact in $\ch A$, we find the isomorphism
\[
(\ch A)(\prod_{i} (X_i\ten_A (P/PI)), M)
= \coprod_i (\ch A)(X_i\ten_A (P/PI), M).
\]
Now the last space is isomorphic to
\[
\coprod_i (\ch A)(X_i\hat{\ten}_A P, M) =
\coprod_i (\cd A)(X_i, M).
\]
Finally, we have to show that the left orthogonal of
$\cd_{fd}(A)$ vanishes in $\cd_{pc}(A)$. Indeed, if $M$ belongs to
the left orthogonal of $\cd_{fd}(A)$, then all the maps $M \to
M/M'$, where $M'$ is a dg submodule of finite codimension, vanish in
$\cd_{pc}(A)$. Thus the maps $H^*(M) \to H^*(M/M')$ vanish. But
$H^*(M)$ is the inverse limit of the system of the $H^*(M/M')$,
where $M'$ ranges over a basis of neighbourhoods formed by dg
submodules of finite codimension. So $H^*(M)$ vanishes and $M$
vanishes in $\cd_{pc}(A)$.

e) Let $R: \cd_{pc}(A') \to \cd_{pc}(A)$ be the restriction functor.
Let $P$ be as in the proof of a). Then $R$ admits the left adjoint
$L$ given by $M \mapsto (M\hat{\ten}_A P) \hat{\ten}_A A'$. It is
easy to check that $R$ and $L$ induce quasi-inverse equivalences
between $\per(A)$ and $\per(A')$. An object $M$ of $\per(A)$ belongs
to the subcategory $\cd_{fd}(A)$ if and only if the space
$\Hom_{\per(A)}(P,M) $ is finite-dimensional for each $P$ in
$\per(A)$. Therefore, the equivalences $L$ and $R$ must induce
quasi-inverse equivalences between $\cd_{fd}(A)$ and $\cd_{fd}(A')$.
Clearly the functor $R$ commutes with arbitrary products. Now the
claim follows from c). We obtain the equivalence between $\cd_{pc}(A)$
and $\cd_{pc}(H^0 A)$ by using the quasi-isomorphism
\[
A \leftarrow \tau_{\leq 0} A \to H^0 A.
\]
The canonical functor $\cd (\Pcm(H^0 A)) \to \cd_{pc}(H^0 A)$ is an
equivalence because its restriction to the subcategory of
finite-dimensional dg modules concentrated in degree~$0$ is an
equivalence and this subcategory compactly generates the opposites
of both categories.

f) Let $\ch_0(A)$ be the full subcategory of $\ch(A)$ formed by the dg
modules which are filtered unions of their finite-dimensional
submodules and let $\ac_0(A)$ be its full subcategory formed by the
acyclic dg modules $N$ such that each finite-dimensional dg submodule
of $N$ is contained in an acyclic finite-dimensional dg submodule.
Notice that the quotient $\ch_0(A)/\ac_0(A)$ inherits arbitary
coproducts from $\ch_0(A)$. We have a natural functor
\[
\ch_0(A)/\ac_0(A) \to \cd_0(A)
\]
and a natural duality functor
\[
\ch_0(A)/\ac_0(A) \to (\cd_{pc}(A))\op
\]
taking $M$ to $\Hom_R(M,R)$. We will show that $\ch_0(A)/\ac_0(A)$
is compactly generated by its full subcategory $\cf$ of finite-dimensional
dg modules and that the above functors induce equivalences
\[
\cf \iso \cd_{fd}(A) \mbox{ and } \cf \iso \cd_{fd}(A)\op.
\]
Since the functors
\[
\ch_0(A)/\ac_0(A) \to \cd_0(A) \mbox{ and }
\ch_0(A)/\ac_0(A) \to (\cd_{pc}(A))\op
\]
both commute with arbitrary coproducts, they must be equivalences
because $\cd_{fd}(A)$ compactly generates $\cd_0(A)$
(by Theorem~\ref{thm:findim-compact}) and its opposite compactly generates
$\cd_{pc}(A)\op$ (by part d). So let us show that $\cf$ consists
of compact objects. Indeed, it follows from the definition of
$\ac_0(A)$ that for an object $F$ of $\cf$, the category of
$\ac_0(A)$-quasi-isomorphisms $F' \to F$ with finite-dimensional
$F'$ is cofinal in the category of all $\ac_0(A)$-quasi-isomorphisms
with target $F$. Now it follows from the calculus of fractions that
$F$ is compact in $\ch_0(A)/\ac_0(A)$. Moreover, we see that
$\cf$ is naturally equivalent to the category $\ch_{fd}(A)/\ac_{fd}(A)$
of section~\ref{ss:finite-dimensional-dg-modules}. Now the fact that both
functors
\[
\cf \iso \cd_{fd}(A) \mbox{ and } \cf \iso \cd_{fd}(A)\op
\]
are equivalences follows from part a) of Theorem~\ref{thm:findim-compact}.
\end{proof}

\subsection{The Calabi-Yau property} \label{ss:CY-property}
Keep the assumptions on
$k$, $R$, $A$ from section~\ref{ss:extension-to-dg-setting}. In
particular, $A$ is assumed to be topologially homologically smooth.
For two objects $L$ and $M$ of $\cd_{pc}(A)$, define
\[
\RHom_A(L,M) = \Hom_A(\bp L, M).
\]
Put $A^e = A\op\hat{\ten}_k A$ and define
\[
\Omega= \RHom_{A^e}(A,A^e).
\]
\begin{lemma}[\cite{Keller08d}] We have a canonical isomorphism
\[
D\Hom(L,M) = \Hom(M\hat{\ten}_A \Omega, L)
\]
bifunctorial in $L\in\cd_{fd}(A)$ and $M\in\cd_{pc}(A)$.
\end{lemma}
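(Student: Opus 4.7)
The strategy is to recognise $\Omega$ as the inverse dualising bimodule and reduce the claim to a formal coinvariants computation. First I would check that both sides are cohomological bifunctors, covariant in $L$ and contravariant in $M$, and that both commute with arbitrary products in $M$: the left-hand side because $\RHom(L,-)$ commutes with products and $D$ converts coproducts to products, the right-hand side directly. Since $\cd_{pc}(A)\op$ is compactly generated by $\cd_{fd}(A)$ by part d) of Proposition~\ref{P:pseudocompact-derived-category}, this allows reduction to the case where $M$ lies in $\cd_{fd}(A)$. By part c) of the same proposition, $L$ itself is perfect.

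For the left-hand side, set $L^\vee = \RHom_A(L, A)$. By perfectness of $L$ the canonical map $M \lten_A L^\vee \to \RHom_A(L, M)$ is an isomorphism, so
\[
D\RHom_A(L, M) \simeq D(M \lten_A L^\vee).
\]
A standard tensor-Hom adjunction applied twice, combined with the biduality $\RHom_{A\op}(L^\vee, A) \simeq L$ for perfect $L$, yields the formal identity $D(M \lten_A L^\vee) \simeq L \lten_A DM$.

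For the right-hand side, I would use the bimodule adjunction
\[
\Hom_A(M \hat{\ten}_A \Omega, L) \simeq \RHom_{A^e}(\Omega, \RHom_k(M, L))
\]
with its natural $A$-bimodule structure on $\RHom_k(M, L)$. Topological homological smoothness of $A$ means $A$ is perfect as a pseudocompact $A^e$-module; hence so is $\Omega = \RHom_{A^e}(A, A^e)$, and there is a canonical isomorphism $\RHom_{A^e}(\Omega, -) \simeq A \lten_{A^e} (-)$. The right-hand side therefore becomes the derived coinvariants $A \lten_{A^e} \RHom_k(M, L)$.

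Finally, for $M$ finite-dimensional the $A$-bimodule $\RHom_k(M, L) \simeq L \otimes_k DM$ (with left action coming from $DM$ and right action from $L$), and an elementary calculation of coinvariants identifies $A \lten_{A^e} (L \otimes_k DM)$ with $L \lten_A DM$, matching the left-hand side. The main technical obstacle is verifying that all adjunctions and smoothness identifications remain valid in the pseudocompact dg setting: in particular, that $\Omega$ is perfect over the completed enveloping algebra $A\op \hat{\ten}_k A$ and that the canonical identification $\RHom_{A^e}(\Omega, -) \simeq A \lten_{A^e} (-)$ persists under the completed tensor product.
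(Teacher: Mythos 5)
The paper gives no proof of this lemma at all: it is quoted from \cite{Keller08d}, where the non-completed analogue is proved, so I can only judge your argument on its own terms. The core of your computation is exactly the argument of \cite{Keller08d} transported to the completed setting, and it is correct where you apply it: for $M$ (and $L$) of finite total dimension, the adjunction $\Hom_A(M\hat{\ten}_A\Omega,L)\cong\RHom_{A^e}(\Omega,\RHom_k(M,L))$, the identification $\RHom_{A^e}(\Omega,?)\cong A\lten_{A^e}(?)$ coming from perfectness of $A$ (hence of $\Omega$) over $A^e$, and the coinvariants computation $A\lten_{A^e}(L\ten_k DM)\cong L\lten_A DM$ involve no completion issues and match the left-hand side. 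This already settles the only case the paper uses, namely the $n$-Calabi-Yau property of $\cd_{fd}(A)$.

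The gap is in your reduction to that case. You assert that both sides commute with arbitrary products in $M$, but the left-hand side does not: $\Hom(L,?)$ turns a product $\prod_i M_i$ into the product $\prod_i\Hom(L,M_i)$, and $D$ does not convert an infinite \emph{product} into a coproduct. By contrast the right-hand side, using that $?\hat{\ten}_A\Omega$ commutes with products and that objects of $\cd_{fd}(A)$ are co-compact in $\cd_{pc}(A)$ (proof of part d) of Proposition~\ref{P:pseudocompact-derived-category}), becomes $\bigoplus_i\Hom(M_i\hat{\ten}_A\Omega,L)$. So the d\'evissage over the compact generators of $\cd_{pc}(A)\op$ does not apply to the left-hand side, and this is not a repairable oversight for the statement in the variance you (and the paper) use: already for $A=R=k$ one has $\Omega=k$, and taking $L=k$ and $M=\prod_{\N}k$ the left-hand side is $D\bigl(\prod_{\N}k\bigr)$, of uncountable dimension, while the right-hand side is $\Hom\bigl(\prod_{\N}k,\,k\bigr)\cong\bigoplus_{\N}k$, of countable dimension. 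You should therefore either state and prove the lemma for $M\in\cd_{fd}(A)$ only (your steps for finite-dimensional $M$ then constitute a complete proof, and this suffices for section~\ref{ss:CY-property}), or prove the product-compatible form $\Hom(L,M)\cong D\Hom(M\hat{\ten}_A\Omega,L)$: there both sides take products in $M$ to products of vector spaces, the subcategory of $M$ on which a natural comparison map is invertible is closed under shifts, triangles and products, and compact generation of $\cd_{pc}(A)\op$ by $\cd_{fd}(A)$ makes your reduction legitimate.
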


Let $n$ be an integer. For an object $L$ of $\cd_{pc}(A^e)$, define
\[
L^\# = \RHom_{A^e}(L,A^e)[n].
\]
The dg algebra $A$ is {\em $n$-Calabi-Yau as a bimodule} if there is
an isomorphism
\[
f: A \iso A^\#
\]
in $\cd_{cd}(A^e)$ such that $f^\# = f$. The preceding lemma implies that
in this case, the category $\cd_{fd}(A)$ is $n$-Calabi-Yau as a
triangulated category.

\begin{theorem}[\cite{Keller08a}]\label{T:ginzburg-smooth} Completed Ginzburg algebras are topologically
homologically smooth and $3$-Calabi-Yau.
\end{theorem}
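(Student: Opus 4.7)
The plan is to establish both statements simultaneously by exhibiting an explicit pseudocompact bimodule resolution of $\hat\Gamma = \hat\Gamma(Q,W)$ of length~$3$ over its enveloping algebra $\hat\Gamma^e = \hat\Gamma\op \hat{\ten}_k \hat\Gamma$, modelled on the Koszul-type resolution Ginzburg wrote down in section~5 of \cite{Ginzburg06}, and then to observe that this resolution is \emph{self-dual} up to the shift $[3]$ in $\cd_{pc}(\hat\Gamma^e)$. Such a resolution simultaneously witnesses topological homological smoothness, because all of its terms are strictly perfect pseudocompact $\hat\Gamma^e$-modules, and the $3$-Calabi-Yau bimodule property of section~\ref{ss:CY-property}, because its self-duality produces the required isomorphism $\hat\Gamma \iso \hat\Gamma^\#$.

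Writing $R = \prod_{i\in Q_0} k e_i$, $V = kQ_1$ for the $R$-bimodule of arrows (placed in degree $0$), $V^*$ for the $R$-bimodule spanned by the dual arrows $a^*$ (in degree $-1$), and $R^{t}$ for the $R$-bimodule spanned by the loops $t_i$ (in degree $-2$), the proposed resolution has shape
\[
0 \to \hat\Gamma \hat{\ten}_R R^{t} \hat{\ten}_R \hat\Gamma \to \hat\Gamma \hat{\ten}_R V^* \hat{\ten}_R \hat\Gamma \to \hat\Gamma \hat{\ten}_R V \hat{\ten}_R \hat\Gamma \to \hat\Gamma \hat{\ten}_R R \hat{\ten}_R \hat\Gamma \to \hat\Gamma \to 0.
\]
The differentials are the obvious bar-type differences $1\ten x\ten 1 \mapsto x\ten 1 - 1\ten x$ attached to each generator $x$ of $\hat\Gamma$, corrected by terms dictated by the internal differential of $\hat\Gamma$: the middle differential absorbs $d(a^*) = \partial_a W$ through the noncommutative second cyclic derivatives of $W$ (splitting each $\partial_a W$ as a bimodule element supported on pairs of arrows), and the leftmost differential absorbs $d(t_i) = e_i\bigl(\sum_a [a,a^*]\bigr)e_i$ in the analogous way. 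Each term is topologically free over $\hat\Gamma^e$, hence strictly perfect. Acyclicity away from degree $0$ is checked after passing to the associated graded for the path-length filtration on $\hat\Gamma$: there the complex degenerates to the classical Koszul bimodule resolution of the free graded path algebra, whose acyclicity is a direct computation, and the conclusion then transports back to the completion via Lemma~\ref{lemma:Mittag-Leffler}.

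For the $3$-Calabi-Yau property, apply $\Hom_{\hat\Gamma^e}(-,\hat\Gamma^e)$ termwise to the resolution. The dualising functor exchanges the $R$-bimodule factors in pairs ($R \leftrightarrow R^{t}$ and $V \leftrightarrow V^*$), and the shifts of the chosen internal degrees of $V^*$ and $R^t$ contribute a total homological shift of $3$, so the dualised complex is canonically isomorphic to the original one shifted by $[3]$. The resulting class $f : \hat\Gamma \iso \hat\Gamma^\#$ is the sought-for Calabi-Yau morphism. Its involutivity $f^\# = f$ follows from the fact that the resolution was built symmetrically in each pair $(a,a^*)$ and in the loops $t_i$, together with the cyclic invariance already built into the notion of a potential.

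The main obstacle lies in the middle step: writing down the correction terms in the differentials so that the squares vanish on the nose (not merely up to homotopy) and so that the whole complex is manifestly self-dual. Both requirements reduce to a single compatibility identity expressing how $W$, its cyclic derivatives, and the bracket $\sum_a [a,a^*]$ interact; this identity is, after unwinding, precisely the cyclic symmetry of $W$ that underlies the definition of the Jacobian algebra. Once the resolution and its self-duality are in hand, topological smoothness is immediate from strict perfection and the $3$-Calabi-Yau statement is read off from the dualised complex.
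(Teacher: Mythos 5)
The first thing to say is that the paper does not prove this statement: Theorem~\ref{T:ginzburg-smooth} is imported from \cite{Keller08a} (``Deformed Calabi-Yau completions'', listed as in preparation), so there is no in-text proof to compare yours against. With that understood, your outline is essentially the argument of the cited reference and of section~5 of \cite{Ginzburg06}: resolve $\hat\Gamma$ by the cone of the bimodule map $\hat\Gamma\,\hat{\ten}_R\, k\tilde{Q}_1\,\hat{\ten}_R\,\hat\Gamma \to \hat\Gamma\,\hat{\ten}_R\,\hat\Gamma$ (your four-term complex is this two-term complex of dg bimodules unrolled along the internal degrees of $V$, $V^*$ and the $t_i$); both terms are strictly perfect, which gives topological homological smoothness; then match the $\Hom_{\hat\Gamma^e}(-,\hat\Gamma^e)$-dual of this complex with the original one shifted by $3$, using the degree symmetry $a\leftrightarrow a^*$, $e_i\leftrightarrow t_i$. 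The smoothness half is genuinely complete as you describe it: exactness of $0\to\Omega^1\hat\Gamma\to\hat\Gamma\,\hat{\ten}_R\,\hat\Gamma\to\hat\Gamma\to 0$ holds for any completed path algebra with continuous differential, and one does not even need the associated-graded reduction.

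The Calabi--Yau half, however, is exactly as hard as the step you label ``the main obstacle'' and then dispose of in one sentence. Making the dualised complex commute with the differentials --- not just agree as graded bimodules --- requires the symmetry of the noncommutative Hessian of $W$, i.e.\ the relation between the bimodule-valued second cyclic derivatives attached to the ordered pairs $(a,b)$ and $(b,a)$ under the flip of tensor factors, together with the analogous compatibility for the terms coming from $d(t_i)=e_i(\sum_a[a,a^*])e_i$; and the condition $f^\#=f$ required by the definition in section~\ref{ss:CY-property} is an additional verification, not a formal consequence of the resolution ``being built symmetrically''. These identities do hold and do follow from cyclic invariance of the potential, but as written your proposal asserts the needed compatibility rather than establishing it --- and this is precisely the delicate point: the full verification that complete Ginzburg algebras carry an exact Calabi--Yau structure was only completed later (by Van den Bergh, in work supplementing \cite{Keller08a}). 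So your text should be regarded as a correct plan, with the smoothness claim fully justified and the $3$-Calabi--Yau claim reduced to, but not yet discharged by, the Hessian-symmetry computation.
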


\subsection{$t$-structure} \label{ss:t-structure} Keep the assumptions on
$k$, $R$, $A$ from the preceding section. Let us assume that
$H^n(A)$ vanishes for all $n>0$. We have the truncated complex
$\tau_{\leq 0}A$, which becomes a dg algebra in a natural way. The
dg algebra $\tau_{\leq 0} A$ is still a bilaterally pseudo-compact
and topologically homologically smooth. Indeed, we have $\tau_{\leq
0}(A/I) = \tau_{\leq 0}(A)/\tau_{\leq 0} I$ for each dg ideal $I$ of
$A$, which yields that $\tau_{\leq 0} A$ is bilaterally
pseudo-compact. Moreover, the morphism $\tau_{\leq 0} A \to A$ is a
quasi-isomorphism, which yields the homological smoothness. Let us
therefore assume that the components $A^n$ vanish in all degrees
$n>0$.

\begin{lemma} \label{lemma:t-structure}
\begin{itemize}
\item[a)] For each pseudocompact dg $A$-module $M$, the
truncations $\tau_{\leq 0} M$  and $\tau_{>0}M$ are pseudocompact dg
$A$-modules.
\item[b)] The functors $\tau_{\leq 0}$ and $\tau_{>0}$
define a $t$-structure on $\cd_{pc}(A)$.
\item[c)] The heart of the $t$-structure is canonically
equivalent to the category $\Pcm(H^0 (A))$ of pseudocompact modules
over the pseudocompact algebra $H^0(A)$.
\end{itemize}
\end{lemma}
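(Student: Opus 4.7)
The plan is to mimic the proof of Lemma~\ref{lemma:canonical-t-structure} (the canonical $t$-structure on the ordinary derived category), but with careful attention to the topology so that truncation preserves pseudocompactness.

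For part a), I would first observe that since $A^n=0$ for $n>0$, the naive truncation $\tau_{\leq 0}N$ of any dg $A$-submodule $N$ of $M$ is again a dg $A$-submodule of $\tau_{\leq 0}M$, and similarly for $\tau_{>0}$. Now let $\{N_i\}_{i\in I}$ be a fundamental system of neighborhoods of $0$ in $M$ consisting of dg submodules with $M/N_i$ of finite total dimension. Then each $M/N_i$ has finite-dimensional truncations, so the exact sequence
\[
0 \to \tau_{\leq 0}N_i \to \tau_{\leq 0}M \to \tau_{\leq 0}(M/N_i) \to 0
\]
(exact because $A^n=0$ for $n>0$) exhibits $\tau_{\leq 0}N_i$ as a dg submodule of finite codimension in $\tau_{\leq 0}M$. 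To verify completeness and separation of the resulting topology on $\tau_{\leq 0}M$, I would apply the Mittag-Leffler lemma~\ref{lemma:Mittag-Leffler} to the inverse system of these exact sequences: since $M = \lim M/N_i$ and the $\tau_{\leq 0}(M/N_i)$ form an inverse system with surjective transition maps and finite-dimensional terms, the inverse limit is $\tau_{\leq 0}M$ and is complete and separated. The argument for $\tau_{>0}M$ is analogous.

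For part b), stability of $\cd_{pc,\leq 0}$ under $\Sigma$ and extensions is clear. Given $M$ in $\cd_{pc}(A)$, part a) provides the short exact sequence $0 \to \tau_{\leq 0}M \to M \to \tau_{>0}M \to 0$ in $\cc_{pc}(A)$, which yields a triangle in $\cd_{pc}(A)$ functorial in $M$. To show that $\tau_{>0}M$ is right orthogonal to $\cd_{pc,\leq 0}$, I argue as in the proof of Lemma~\ref{lemma:canonical-t-structure}: any morphism $f\colon L \to \tau_{>0}M$ from $L \in \cd_{pc,\leq 0}$ factors through $\tau_{>0}L = 0$ by functoriality.

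For part c), restriction along the canonical projection $A \to H^0(A)$ (a morphism of pseudocompact dg algebras, via Lemma~\ref{lemma:pseudocompact-homology}) yields a functor $\Pcm(H^0 A) \to \ch$ into the heart. Essential surjectivity follows from the truncations: any $M$ in the heart is quasi-isomorphic to $H^0(M)$, which lies in $\Pcm(H^0 A)$. For fully faithfulness, given $L,M \in \Pcm(H^0 A)$, I would compute $\Hom_{\cd_{pc}(A)}(L,M)$ as a filtered colimit over quasi-isomorphisms $M \to M'$ in $\ch_{pc}(A)$; using the truncation $M \to M' \to \tau_{\leq 0}M'$, the subcategory of $M'$ concentrated in non-positive degrees is cofinal, and then passage to $H^0$ gives the identification with $\Hom_{\Pcm(H^0 A)}(L,M)$.

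The main obstacle is part a): one needs to verify that truncations really are pseudocompact, which amounts to interchanging the truncation functor with the inverse limit defining the topology. The Mittag-Leffler lemma~\ref{lemma:Mittag-Leffler} handles this, but one must be careful that the finite-codimensional dg submodules $\tau_{\leq 0}N_i \subset \tau_{\leq 0}M$ actually form a basis of neighborhoods (as opposed to merely a filtered family of submodules), which follows because every finite-codimensional dg submodule of $\tau_{\leq 0}M$ arises from one of the form $\tau_{\leq 0}N$ for $N$ ranging through a cofinal subsystem of neighborhoods of $0$ in $M$.
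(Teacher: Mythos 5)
Your strategy coincides with the paper's: part a) is handled by showing that the truncations $\tau_{\leq 0}N_i$ of a basis of open dg submodules of $M$ give a basis of open dg submodules of finite codimension for which $\tau_{\leq 0}M$ is complete and separated, and parts b) and c) are carried over from Lemma~\ref{lemma:canonical-t-structure} exactly as the paper does. However, one step of your part a) does not hold as stated: the sequence $0 \to \tau_{\leq 0}N_i \to \tau_{\leq 0}M \to \tau_{\leq 0}(M/N_i) \to 0$ need not be exact on the right, and the vanishing of $A$ in positive degrees is irrelevant to this point. In degree $0$ the map in question is $Z^0(M)\to Z^0(M/N_i)$, whose surjectivity would require $d(M^0)\cap N_i^1\subseteq d(N_i^0)$; already for $A=k$ and $M$ the identity map of $k$ placed in degrees $0$ and $1$, with $N=M^1$, one has $Z^0(M)=0$ but $Z^0(M/N)=k$. (The paper's own proof asserts the corresponding isomorphism $Z^0(M)/Z^0(M')\iso Z^0(M/M')$ with an equally thin justification, so the issue is inherited rather than introduced.) Your proof survives because only the left-exact part is actually used: $\tau_{\leq 0}M/\tau_{\leq 0}N_i$ embeds into $M/N_i$, hence is finite-dimensional, and completeness and separatedness of the induced topology follow because $\tau_{\leq 0}M$ is a closed subspace of the complete separated module $M$ (its degree-$0$ component $Z^0(M)=\ker(d\colon M^0\to M^1)$ is closed since $d$ is continuous and $M^1$ is separated), whence $\tau_{\leq 0}M=\lim\,\tau_{\leq 0}M/(\tau_{\leq 0}M\cap N_i)$. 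Note also that Lemma~\ref{lemma:Mittag-Leffler} does not apply literally to your inverse system, since the kernels $\tau_{\leq 0}N_i$ are in general infinite-dimensional; the closedness argument replaces it. With these repairs your argument matches the paper's; the surjectivity of the transition maps you invoke is likewise unnecessary.
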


\begin{proof} a) Let $M'$ be a dg submodule of $M$. The canonical
map
\[
Z^0(M)/Z^0(M') \to Z^0(M/M')
\]
is an isomorphism because the map $B^1(M') \to B^1(M)$ is injective.
Thus, we have an isomorphism
\[
\tau_{\leq 0}M = \lim (\tau_{\leq 0}(M) / \tau_{\leq 0}(M')) \ko
\]
where $M'$ ranges through a basis of neighbourhoods of $0$ formed by
dg submodules of finite codimension. It follows that $\tau_{\leq
0}M$ is endowed with a complete separated topology and is
pseudocompact. Similarly for $\tau_{>0}M = M/\tau_{\leq 0} M$.

b) They define functors in the homotopy categories which preserve
quasi-isomorphisms and thus descend to $\cd_{pc}(A)$. As in the
proof of lemma~\ref{lemma:canonical-t-structure}, it is not hard to
check that the induced functors define a $t$-structure.

c) We omit the proof, which is completely analogous to that of part
b) of lemma~\ref{lemma:canonical-t-structure}.
\end{proof}

\subsection{Jacobi-finite quivers with potentials}
\label{ss:Jacobi-finite-quivers-with-potentials} Let $k$ be a field
and $R$ a finite-dimensional separable $k$-algebra. Let $A$ be a dg
$R$-algebra which is bilaterally pseudocompact
(section~\ref{ss:extension-to-dg-setting}). Assume moreover that
\begin{itemize}
\item[1)] $A$ is topologically homologically smooth
(section~\ref{ss:extension-to-dg-setting}),
\item[2)] for each $p>0$, the space $H^p A$ vanishes,
\item[3)] the algebra $H^0(A)$ is finite-dimensional,
\item[4)] $A$ is $3$-Calabi-Yau as a bimodule
(section~\ref{ss:CY-property}).
\end{itemize}
Thanks to assumption 1), the finite-dimensional derived
category $\cd_{fd}(A)$ is contained in the perfect derived
category. The {\em generalized cluster category} \cite{Amiot08a} is
the triangle quotient $\cc=\per(A)/\cd_{fd}(A)$. Let
$\pi$ be the canonical projection functor.

\begin{theorem} The generalized cluster category $\cc$ is
$\Hom$-finite and $2$-Calabi-Yau. The object $\pi(A)$ is
cluster-tilting in $\cc$.
\end{theorem}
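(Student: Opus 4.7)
The plan is to adapt Amiot's proof of Theorem~2.1 of \cite{Amiot08a} to the pseudocompact setting developed in section~\ref{ss:extension-to-dg-setting}. By Lemma~\ref{lemma:t-structure}, the truncation functors yield a $t$-structure on $\cd_{pc}(A)$ whose heart $\Pcm(H^0 A)$ coincides, by assumption~3), with the finite-length category $\mod H^0 A$. By Proposition~\ref{P:pseudocompact-derived-category}, the subcategories $\per(A)$ and $\cd_{fd}(A)$ are intrinsic, and the $3$-Calabi-Yau bimodule property (assumption~4)) yields a Serre-type duality
\[
D\Hom_{\per(A)}(X, M) \cong \Hom_{\per(A)}(M, \Sigma^3 X)
\]
for $X \in \per(A)$ and $M \in \cd_{fd}(A)$, via section~\ref{ss:CY-property}.

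First I would introduce a \emph{fundamental domain} $\cf \subset \per(A)$, in the style of Proposition~\ref{P:fundamental-domain-of-cluster-category}, consisting of cones of morphisms in $\add(A)$; equivalently, these are the objects $X$ of $\per(A)$ with $H^n(X) = 0$ for $n \notin \{-1,0\}$. The key step is to show that the composition $\cf \hookrightarrow \per(A) \to \cc$ is essentially surjective, and that $\Hom_{\per(A)}(X,Y)$ is finite-dimensional for $X,Y \in \cf$. Essential surjectivity reduces to showing that $\tau_{\geq 1} P \in \cd_{fd}(A)$ for any $P \in \per(A)$: since $P$ is compact, each $H^n(P)$ is finitely generated over $H^0 A$ (using the Nakayama functor of section~\ref{ss:Nakayama-functor} together with Proposition~\ref{P:pseudocompact-derived-category}), hence finite-dimensional by assumption~3); iterating the truncation triangle yields the desired representatives in $\cf$. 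Finite-dimensionality of $\Hom_{\per(A)}(X,Y)$ for $X, Y \in \cf$ then follows from the degree restrictions and the finite length of the heart, and this descends to $\cc$.

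For the $2$-Calabi-Yau property and cluster-tilting of $\pi(A)$, I would follow Amiot once $\Hom$-finiteness is in place. The Serre functor $\Sigma^3$ on $\cd_{fd}(A)$ descends to $\Sigma^2$ on $\cc$ via the standard triangle argument (Theorem~2.1 of \cite{Amiot08a}), exploiting that $\cd_{fd}(A)$ is the kernel of $\pi$ and that morphisms in $\cc$ are computed through triangles with third term in $\cd_{fd}(A)$. For cluster-tilting, the vanishing $\Hom_\cc(\pi A, \Sigma \pi A)=0$ follows from $H^1(A) = 0$ (assumption~2)) together with a $t$-structure argument showing that no morphism $A \to \Sigma A$ can factor through $\cd_{fd}(A)$ nontrivially. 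The approximation condition is produced by lifting a minimal projective presentation of $H^0(X) \in \mod H^0 A$ to a triangle in $\per(A)$ with outer terms in $\add(A)$ and cone vanishing in $\cc$.

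The main obstacle will be verifying that $\tau_{\geq 1} P \in \cd_{fd}(A)$ for perfect $P$ in the pseudocompact framework. Amiot's direct finite-dimensionality arguments for Jacobi-finite potentials are unavailable here; instead one must combine the compactness of $P$, the spectral sequence of the $t$-structure, the finite-dimensionality of $H^0 A$, and the equivalence $\per(A) \simeq \per(H^0 A)$ (Proposition~\ref{P:pseudocompact-derived-category}~e) when $A$ is formal, with a deformation argument in general) to control the homology uniformly. Once this finiteness is established, the remainder of Amiot's proof carries over essentially verbatim, since our reformulations of $\per$, $\cd_{fd}$ and the Calabi-Yau duality match hers term-for-term.
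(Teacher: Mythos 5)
Your overall strategy coincides with the paper's: the paper gives no independent argument here, but simply states that one imitates Theorem~2.1 of \cite{Amiot08a} using the pseudocompact machinery of the preceding sections, which is exactly what you propose. Two concrete points in your sketch are nevertheless wrong. First, the fundamental domain $\cf$ of cones of morphisms in $\add(A)$ is \emph{not} the set of perfect objects whose homology is concentrated in degrees $-1$ and $0$: already $A$ itself lies in $\cf$, and for a Ginzburg dg algebra $H^{p}(A)$ is in general nonzero for all $p\leq 0$. Amiot's characterization is $\cf=\cd^{\leq 0}\cap{}^{\perp}\cd^{\leq -2}\cap\per(A)$ — a perpendicularity condition, not a vanishing condition on homology — and the arguments for essential surjectivity and Hom-finiteness have to be run with that description.

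Second, and more seriously, your treatment of the key finiteness step does not work as stated. Compactness of $P$ does not give that $H^n(P)$ is finitely generated over $H^0(A)$; for a nonpositively graded dg algebra there is no such implication, and in the pseudocompact setting $H^n(A)$ is a priori only a pseudocompact $H^0(A)$-module, which over a finite-dimensional algebra can still be infinite-dimensional (an infinite product of simples is pseudocompact) — for the same reason $\Pcm(H^0A)$ is not $\mod H^0A$. The fallback you offer ($\per(A)\simeq\per(H^0A)$ when $A$ is formal, ``with a deformation argument in general'') is not an argument. The correct route, and the reason the paper can call the adaptation routine, is Amiot's own finiteness lemma: one proves that $H^p(A)$ is finite-dimensional for every $p$ by descending induction, using the $3$-Calabi-Yau duality $D\Hom(X,M)\cong\Hom(M,\Sigma^3X)$ for $M\in\cd_{fd}(A)$ — available verbatim in the pseudocompact setting by the lemma of section~\ref{ss:CY-property} — together with assumption 3). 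Once that is in place, the truncations you need do land in $\cd_{fd}(A)$, and the remainder of your outline (Hom-finiteness on $\cf$, descent of the Serre functor to $\Sigma^2$, cluster-tilting of $\pi(A)$) goes through as in \cite{Amiot08a}.
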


This theorem was proved for non topological dg algebras $A$ by
C.~Amiot, \confer Theorem 2.1 of \cite{Amiot08a}. Using the material
of the preceding sections, one can easily imitate Amiot's proof to
obtain the above topological version. We leave the details to the
reader. In particular, by Theorem~\ref{T:ginzburg-smooth}, the
statement holds for complete Ginzburg dg algebras associated with
quivers with potential $(Q,W)$, where $W$ belongs to the completed
path algebra of $Q$ and the Jacobian algebra is finite-dimensional.

\subsection{Mutation} \label{ss:pseudocompact-main-theorem}
Let $(Q,W)$ be a quiver with potential and
let $\Gamma$ be the associated completed Ginzburg dg algebra. By
definition, $\Gamma$ is concentrated in non positive degrees and
by Theorem~\ref{T:ginzburg-smooth}, it is topologically homologically smooth
and $3$-Calabi-Yau. Let $i$ be a vertex of $Q$ such that conditions
(c1), (c2) and (c3) of section~\ref{S:mutation} hold. Let $(Q',W')$
be the mutation $\tilde{\mu}_i(Q,W)$ and $\Gamma'$ the associated
Ginzburg dg algebra.

\begin{theorem}\label{T:mainthm-complete}
\begin{itemize}
\item[a)] There is a triangle equivalence
\[
F_{pc}:\mathcal{D}_{pc}(\Gamma')\longrightarrow\mathcal{D}_{pc}(\Gamma)
\]
which sends the $P'_j$ to $P_j$ for $j\neq i$ and to the cone $T_i$
over the morphism
\[
P_i\longrightarrow \bigoplus_{\alpha}P_{t(\alpha)}
\]
for $i=j$, where we have a summand $P_{t(\alpha)}$ for each arrow
$\alpha$ of $Q$ with source $i$ and the corresponding component of
the map is the left multiplication by $\alpha$. The functor $F_{pc}$
restricts to triangle equivalences from $\per(\Gamma')$ to
$\per(\Gamma)$ and from $\mathcal{D}_{fd}(\Gamma')$ to
$\mathcal{D}_{fd}(\Gamma)$.

\item[b)] Let $\Gamma_{\mathrm{red}}$ respectively $\Gamma'_{\mathrm{red}}$ be
the complete Ginzburg dg algebra associated with the reduction of
$(Q,W)$ respectively the reduction $\mu_i(Q,W)$ of
$\tilde{\mu}_i(Q,W)$. The functor $F_{pc}$ yields a triangle equivalence
\[
F_{\mathrm{red}}:\mathcal{D}_{pc}(\Gamma'_{\mathrm{red}})\longrightarrow\mathcal{D}_{pc}(\Gamma_{\mathrm{red}}),
\]
which restricts to triangle equivalences from
$\per(\Gamma'_{\mathrm{red}})$ to $\per(\Gamma_{\mathrm{red}})$ and
from $\mathcal{D}_{fd}(\Gamma'_{\mathrm{red}})$ to
$\mathcal{D}_{fd}(\Gamma_{\mathrm{red}})$.
\end{itemize}
\end{theorem}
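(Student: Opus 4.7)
The plan is to reproduce the strategy of Theorem~\ref{T:mainthm} in the pseudocompact setting. The $\Gamma'$-$\Gamma$-bimodule $T$ constructed in section~\ref{ss:bimodule} is naturally a pseudocompact dg module over the enveloping algebra $(\Gamma')\op\hat{\ten}_k\Gamma$: as a right dg $\Gamma$-module it is a finite extension of the pseudocompact modules $P_j=e_j\Gamma$, and the continuous algebra homomorphism $\Gamma'\to\mathcal{H}om_\Gamma(T,T)$ built in section~\ref{ss:bimodule} shows that the left $\Gamma'$-action is continuous. I define
\[
F_{pc}=?\,\hat{\otimes}^{\mathbf{L}}_{\Gamma'}T:\cd_{pc}(\Gamma')\to\cd_{pc}(\Gamma),\qquad G_{pc}=\RHom_\Gamma(T,?):\cd_{pc}(\Gamma)\to\cd_{pc}(\Gamma'),
\]
where the derived completed tensor product is obtained via a strictly perfect bimodule resolution of $\Gamma'$ as in the proof of part a) of Proposition~\ref{P:pseudocompact-derived-category}. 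These form an adjoint pair with $F_{pc}$ left adjoint to $G_{pc}$.

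First I would compare $F_{pc}$ and $G_{pc}$ with the functors $F$ and $G$ of Theorem~\ref{T:mainthm} on perfect and finite-dimensional objects. For the perfect generator $P'_j=e_j\Gamma'$, flatness gives $F_{pc}(P'_j)=e_jT=T_j$ with the description required by the statement; more generally, on strictly perfect dg modules the completed tensor product coincides with the classical one, so the restriction of $F_{pc}$ to $\per(\Gamma')$ matches the restriction of $F$. On finite-dimensional dg modules---which are perfect by Proposition~\ref{P:pseudocompact-derived-category} c)---any $\Gamma$-linear map out of $T$ with target of finite total dimension has open kernel and is automatically continuous, so $G_{pc}$ and $G$ also agree there. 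Combined with Theorem~\ref{T:mainthm} and Proposition~\ref{P:equivfd}, this yields that $F_{pc}$ and $G_{pc}$ induce quasi-inverse equivalences $\per(\Gamma')\iso\per(\Gamma)$ and $\cd_{fd}(\Gamma')\iso\cd_{fd}(\Gamma)$.

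To extend the equivalence to all of $\cd_{pc}$, I would use Proposition~\ref{P:pseudocompact-derived-category} d): the opposite category $\cd_{pc}(\Gamma)\op$ is compactly generated by $\cd_{fd}(\Gamma)$, and similarly for $\Gamma'$. Since $G_{pc}$ is a right adjoint it preserves products, so the opposite functor $G_{pc}\op$ preserves coproducts; moreover, $G_{pc}$ sends the compact objects $\cd_{fd}(\Gamma)\subset\cd_{pc}(\Gamma)\op$ to $\cd_{fd}(\Gamma')\subset\cd_{pc}(\Gamma')\op$ and restricts there to an equivalence by the previous step. A standard Neeman-type compact-generation argument then implies that $G_{pc}\op$, and hence $G_{pc}$ and its quasi-inverse $F_{pc}$, are equivalences, which gives part a). For part b), Lemma~\ref{L:reduction} provides quasi-isomorphisms $\Gamma\to\Gamma_{\mathrm{red}}$ and $\Gamma'\to\Gamma'_{\mathrm{red}}$ of bilaterally pseudocompact topologically homologically smooth dg algebras, and part e) of Proposition~\ref{P:pseudocompact-derived-category} promotes these into triangle equivalences of pseudocompact derived categories intertwining $F_{pc}$ with $F_{\mathrm{red}}$. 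The main technical obstacle I foresee is the verification that $F_{pc}$ is well-defined as a functor on $\cd_{pc}$ and that it genuinely coincides with the classical $F$ on perfect objects; this rests on the perfectness of $T$ as a right $\Gamma$-module together with a careful choice of cofibrant resolutions compatible with the bimodule resolution of $\Gamma'$ underlying $\bp$.
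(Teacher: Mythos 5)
Your proposal is correct and follows essentially the same route as the paper: both define $F_{pc}$ as the left derived completed tensor product with the bimodule of section~\ref{ss:bimodule} and $G_{pc}$ as its right adjoint, identify their restrictions to $\cd_{fd}$ with the functors of Proposition~\ref{P:equivfd} via the inclusion $\cd_{fd}\subset\per$ from Proposition~\ref{P:pseudocompact-derived-category}, and then conclude by compact generation of the opposite categories by $\cd_{fd}$ (part d) of that proposition), with part b) reduced to Lemma~\ref{L:reduction} and part e). The paper's argument is just slightly more compressed, running the final step through $G_{pc}$ preserving products rather than spelling out the Neeman-type argument, but the content is identical.
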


\begin{remark} By combining the theorem with the facts on $t$-structures from
section~\ref{ss:t-structure} we obtain a comparison of the categories
of pseudocompact modules
\[
\Pcm(H^0(\Gamma)) \mbox{ and } \Pcm(H^0(\Gamma'))
\]
completely analogous to the one in section~\ref{ss:comparison-of-t-structures}.
\end{remark}

\begin{proof} a) Let $X$ be the $\Gamma'$-$\Gamma$-bimodule constructed
in section~\ref{ss:bimodule}. It gives rise to a pair of adjoint
functors: the left adjoint $F_{pc}: \cd_{pc}(\Gamma') \to
\cd_{pc}(\Gamma)$ is the left derived functor of the completed
tensor product functor $M \mapsto M\hat{\ten}_{\Gamma'} X$. The
right adjoint $G_{pc}$ is induced by the functor
$\Hom_{\Gamma}(X,?)$. These functors restrict to a pair of adjoint
functors between $\cd_{fd}(\Gamma)$ and $\cd_{fd}(\Gamma')$. The
induced functors are isomorphic to those of
Proposition~\ref{P:equivfd}. Indeed, by
Proposition~\ref{P:pseudocompact-derived-category}, the categories
$\cd_{fd}$ are contained in the perfect derived categories and the
canonical morphism from the left derived functor of the (non
completed) tensor product $M \mapsto M\ten_{\Gamma'} X$ to $F_{pc}$
restricts to an isomorphism on the perfect derived category and
similarly for $G_{pc}$. Thus, the restriction of $G_{pc}$ is an
equivalence from $\cd_{fd}(\Gamma)$ to $\cd_{fd}(\Gamma')$. Moreover,
the functor $G_{pc}$ commutes with arbitrary products since it
admits a left adjoint. It follows that $G_{pc}$ is an equivalence
since the opposites of the categories $\cd_{pc}$ are compactly
generated by the subcategories $\cd_{fd}$, by
Proposition~\ref{P:pseudocompact-derived-category}.

b) This follows from a) and part e) of
Proposition~\ref{P:pseudocompact-derived-category}.
\end{proof}


\def\cprime{$'$} \def\cprime{$'$}
\providecommand{\bysame}{\leavevmode\hbox to3em{\hrulefill}\thinspace}
\providecommand{\MR}{\relax\ifhmode\unskip\space\fi MR }
\providecommand{\MRhref}[2]{%
  \href{http://www.ams.org/mathscinet-getitem?mr=#1}{#2}
}
\providecommand{\href}[2]{#2}

\end{document}